\documentclass[a4paper,11pt]{article}
\usepackage{xargs}
\usepackage[pdftex,dvipsnames]{xcolor}
\usepackage{comment,amsmath,amssymb,amsthm,pstricks,changepage,pifont,graphicx,titlesec,wrapfig,faktor}
\usepackage[english]{babel}
\usepackage[nomessages]{fp}
\usepackage{algorithm}
\usepackage[noend]{algpseudocode}
\usepackage{hyperref}

\newtheorem{theorem}{Theorem}

\newtheorem{lemma}[theorem]{Lemma}

\newtheorem{problem}[theorem]{Problem}

\theoremstyle{definition}
\newtheorem{algo}[theorem]{Algorithm}
\theoremstyle{remark}
\newtheorem{heuristic}[theorem]{Heuristic}

\usepackage[colorinlistoftodos,prependcaption,textsize=tiny]{todonotes}
\newcommandx{\unsure}[2][1=]{\todo[linecolor=red,backgroundcolor=red!25,bordercolor=red,#1]{#2}}
\newcommandx{\info}[2][1=]{\todo[linecolor=OliveGreen,backgroundcolor=OliveGreen!25,bordercolor=OliveGreen,#1]{#2}}

\newcommand\blfootnote[1]{%
  \begingroup
  \renewcommand\thefootnote{}\footnote{#1}%
  \addtocounter{footnote}{-1}%
  \endgroup
}

\hypersetup{
    colorlinks,
    linkcolor={red!50!black},
    citecolor={blue!50!black},
    urlcolor={blue!80!black}
}

\allowdisplaybreaks

\theoremstyle{remark}
\newtheorem{remark}[theorem]{Remark}
\theoremstyle{definition}

\setcounter{tocdepth}{2}

\oddsidemargin = 15pt
\evensidemargin = 15pt
\topmargin = 0pt
\textwidth = 425pt
\headheight = 15pt
\headsep = 0pt

\pagestyle{plain}

\algnewcommand\Input{\item[\textbf{Input}]}
\algnewcommand\Output{\item[\textbf{Output}]}
\algnewcommand\Outputwhite{\item[\textbf{\textcolor{white}{Output}}]}
\usepackage{float}
%\newfloat{algorithm}{t}{lop}

\newcommand*\varhrulefill[1][0.4pt]{\leavevmode\leaders\hrule height#1\hfill\kern0pt}

\DeclareMathOperator{\GL}{GL}
\DeclareMathOperator{\lwidth}{lw}
\DeclareMathOperator{\conv}{conv}

\DeclareMathOperator{\pdet}{pdet}
\DeclareMathOperator{\chr}{char}
\DeclareMathOperator{\spec}{Spec}
\DeclareMathOperator{\proj}{Proj}

\DeclareMathOperator{\QQ}{\mathbf{Q}}
\DeclareMathOperator{\RR}{\mathbf{R}}
\DeclareMathOperator{\CC}{\mathbf{C}}
\DeclareMathOperator{\FF}{\mathbb{F}}
\DeclareMathOperator{\PPq}{\mathbb{P}}
\DeclareMathOperator{\PPK}{\mathbf{P}}
\DeclareMathOperator{\AAq}{\mathbb{A}}
\DeclareMathOperator{\AAK}{\mathbf{A}}
\DeclareMathOperator{\TTq}{\mathbb{T}}
\DeclareMathOperator{\TTK}{\mathbf{T}}
\DeclareMathOperator{\ZZ}{\mathbf{Z}}

\newcommand{\polfig}[4]{
\qquad 
\begin{minipage}[b]{#2cm}
\begin{center}
  \includegraphics[height=#3cm]{#1}
\end{center}
\end{minipage}
\FPeval{\result}{{#3}/2 - 0.2}
\begin{minipage}[b]{2cm}
(#4)
\vspace{\result cm}
\end{minipage}
} % REGEL: rooster van breedte m hokjes en hoogte n hokjes: \polfig{naam.pdf}{m*0.4}{n*0.4}{$\Delta_{5,1}^0$}

\begin{document}

\nocite{*}

\title{Point counting on curves using a gonality preserving lift}
\author{Wouter Castryck and Jan Tuitman}
\date{}
\maketitle

\begin{abstract} \blfootnote{2010 Mathematics Subject Classification: 14H25, 14H51, 14G10} \blfootnote{Key words and phrases: curves over finite fields, gonality, Hasse-Weil zeta function}
We study the problem of lifting curves from finite fields to number fields in 
a genus and gonality preserving way. More precisely, we sketch how this can be done efficiently for
curves of gonality at most four, with an in-depth treatment 
of curves of genus at most five over finite fields of odd characteristic, including an implementation in Magma. We then use such a lift as input
to an algorithm due to the second author for computing zeta functions of curves over finite fields using $p$-adic cohomology. 
%A subsequent paper shows how this can be generalized to curves of gonality at most four.
\end{abstract}

%\tableofcontents

\section{Introduction}

This article is about efficiently lifting algebraic curves over finite fields to characteristic zero, in a genus and gonality preserving way,
with an application to $p$-adic point counting.
Throughout, our curves are always understood to be geometrically irreducible, but not necessarily non-singular and/or complete. 
By the genus of a
curve we mean its geometric genus, unless otherwise stated. As for the gonality of a curve over a field $k$, we make a distinction between two notions: 
by its \emph{$k$-gonality} we mean the minimal degree of a non-constant $k$-rational map to the projective line, while by its \emph{geometric gonality}
we mean the $\bar{k}$-gonality, where $\bar{k}$ denotes an algebraic closure of $k$. We also make a notational distinction between projective, affine or toric (= affine minus coordinate hyperplanes) $n$-space in characteristic zero, in which case we write $\PPK^n, \AAK^n, \TTK^n$, 
and their finite characteristic counterparts, where we opt for $\PPq^n, \AAq^n, \TTq^n$. Apart from that we avoid reference to the base field, which should always be clear from the context.
Similarly we write $\QQ$ for the field of rational numbers and $\FF_q$ for the finite field with $q$ elements, where $q$ is a power of a prime number $p$.
For each such $q$ we fix a degree $\log_pq$ extension $K \supset \QQ$ in which $p$ is inert, and let $\mathcal{O}_K$ denote its ring of integers. We then identify $\FF_q$ with the residue field $\mathcal{O}_K / (p)$.
Our lifting problem is as follows:

\begin{problem} \label{liftingproblem}
Given a curve $\overline{C}$ over $\FF_q$, find an efficient algorithmic way of producing a polynomial $f \in \mathcal{O}_K[x,y]$ such that 
\begin{enumerate}
  \item[(i)] its reduction mod $p$ defines a curve that is birationally equivalent to $\overline{C}$, 
  \item[(ii)] the curve $C \subset \AAK^2$ it defines has the same genus as $\overline{C}$, 
  \item[(iii)] its degree in $y$ equals the $\FF_q$-gonality of $\overline{C}$.
%  \item[(iii)] $f$ is monic of degree $\gamma$ when considered as a polynomial in $y$, where $\gamma$ is 
 % the $\FF_q$-gonality of $\overline{C}$.
\end{enumerate}
\end{problem}
Note that these conditions imply that the $K$-gonality of $C$ equals the $\FF_q$-gonality of $\overline{C}$, because
the gonality cannot increase under reduction mod $p$; see e.g.\ \cite[Thm.\,2.5]{derickx}.
We are unaware of whether an $f$ satisfying (i-iii) exists in general. Grothendieck's existence 
theorem~\cite{illusie} implies that in theory one can achieve (i) and (ii) over the ring of integers $\ZZ_q$ of the 
$p$-adic completion $\QQ_q$ of $K$, but, firstly, it is not clear that we can always take $f$ to be defined over $\mathcal{O}_K$ 
and, secondly, we do not know whether it is always possible to incorporate (iii), let alone in an effective way. To give a concrete 
open case, we did not succeed in dealing with Problem~\ref{liftingproblem} for curves of genus four having $\FF_q$-gonality five, which can only exist if $q \leq 7$. (However, as we will see, among all curves of genus at most five, the only cases that we cannot handle are 
pathological examples of the foregoing kind.)

We are intentionally vague about what it means to be \emph{given} a curve $\overline{C}$ over $\FF_q$. 
It could mean that we are considering the affine plane curve defined by a given 
absolutely irreducible polynomial $\overline{f} \in \FF_q[x,y]$. Or it could mean that we are considering the affine/projective curve
defined by a given more general system of equations over $\FF_q$. In all cases we will ignore the cost
of computing the genus $g$ of $\overline{C}$.
Moreover, in case $g = 0$ 
we assume that it is easy to realize $\overline{C}$ as a plane conic (using the anticanonical embedding) and if $g = 1$ we ignore
the cost of finding a plane Weierstrass model. By the Hasse-Weil bound every genus one curve over $\FF_q$ is elliptic, so this
is indeed possible.
If $g \geq 2$ then we assume that one can easily decide whether $\overline{C}$ is hyperelliptic or not (note
that over finite fields, curves are hyperelliptic iff they are geometrically hyperelliptic, so there is no ambiguity here). If it is then
we suppose that it is easy to find a generalized Weierstrass model. If not then it is assumed that one
can effectively compute a canonical embedding
\[ \kappa : \overline{C} \hookrightarrow \PPq^{g-1} \]
along with a minimal set of generators for the ideal of its image. The latter will usually be our starting point. 
Most of the foregoing tasks are tantamount to computing certain Riemann-Roch spaces.
There is extensive literature on this functionality, which 
has been implemented in several computer algebra packages, such as Magma~\cite{magma} and Macaulay2~\cite{macaulay}. 
%We refer to \cite{hess} for background
%on how Riemann-Roch computations can be done.

The idea is then to use the output polynomial $f$ as input to a recent algorithm due to the second author~\cite{tuitman1,tuitman2} for computing
the Hasse-Weil zeta function of $\overline{C}$. This algorithm uses $p$-adic cohomology, which it represents through the map 
$\pi : C \rightarrow \PPK^1 : (x,y) \mapsto x$. The algorithm only works if $C$ and $\pi$ have appropriate reduction modulo $p$, in a rather
subtle sense for the precise description of which we refer to \cite[Ass.\,1]{tuitman2}. This condition is needed to be able to apply a
comparison theorem between the (relative) $p$-adic cohomology of $\overline{C}$ and
the (relative) de Rham cohomology of $C \otimes \QQ_q$, which is where the actual computations are done. 
%Here $\QQ_q$ is the unramified degree $a$ extension of the field of $p$-adic numbers $\QQ_p$.
For such a theorem to hold, by dimension arguments it is necessary that $C$ and $\overline{C}$ have the same genus, whence our condition (ii).
This may be insufficient, in which case $f$ will be rejected, but for $p > 2$ our experiments show that this
is rarely a concern as soon as $q$ is sufficiently large. Moreover, in many cases below, our construction leaves enough freedom to retry in the event of a failure.

The algorithm from~\cite{tuitman1,tuitman2} has a running time that is sextic in $\deg \pi$, which equals the degree in $y$ of $f$, so it is important to keep this value within reason.
Because the $\FF_q$-gonality of $\overline{C}$ is an innate lower bound, it is natural to try to meet this value, whence our condition (iii). 
At the benefit of other parameters affecting the complexity, one could imagine it being useful to allow input polynomials whose degree in $y$ exceeds the $\FF_q$-gonality
of $\overline{C}$, but in all cases that we studied the best performance results were indeed obtained using a gonality-preserving lift. 
At the same time, looking for such a lift is a theoretically neat problem. 
%(Increasing the degree in $y$ might
%be useful for dealing with curves for which \cite[Ass.\,1]{tuitman2} is violated, though.)

\begin{remark}
For the purpose of point counting, it is natural to wonder why we lift to $\mathcal{O}_K$, and not to the ring $\ZZ_q$, which is a priori easier. In fact, most computations in the algorithm from~\cite{tuitman1,tuitman2} are carried out to some finite $p$-adic precision~$N$, so 
it would even be sufficient to lift to $\mathcal{O}_K/(p^N) = \ZZ_q/(p^N)$. 
 A first reason for lifting to $\mathcal{O}_K$ is simply that this turns out to be possible in the cases that we studied, without additional difficulties.
A second more practical reason is that
at the start of the 
algorithm from~\cite{tuitman1,tuitman2} some integral bases have to be computed in the function field of the curve. Over a number field $K$ this is standard
and implemented in Magma, but to finite $p$-adic precision it is not clear how to do this, and in particular no implementation is available. Therefore, the integral bases
are currently computed to exact precision, and we need $f$ to be defined over $\mathcal{O}_K$.
\end{remark}
  
%In Section~\ref{section_preliminary} we review some basic/known facts on the gonality. This will reduce our study to the
%case of non-hyperelliptic curves of genus $g \geq 3$. Recall that being non-hyperelliptic is synonymous
%to having $\FF_q$-gonality different from $2$. 
%see Section~\ref{section_preliminary_comments} for some lines of comment on this. 

\paragraph*{Contributions} 
As explained in Section~\ref{section_preliminary} the cases
where $\overline{C}$ is rational, elliptic or hyperelliptic are straightforward. In this article we give a recipe for tackling Problem~\ref{liftingproblem} in the case of 
curves of $\FF_q$-gonality $3$ and $4$. 
Because of their practical relevance, our focus lies on curves having genus at most five, which is large
enough for the main trigonal and tetragonal phenomena to be present. 
The details can be found in Section~\ref{section_lowgenus};
%In fact, here we avoid
%usage of the language of toric varieties, rational normal scrolls, \dots\ which would be overkill (even
%though here too, toric geometry does work behind the scenes). 
more precisely in Sections~\ref{section_genus3},~\ref{section_genus4} and~\ref{section_genus5}
we attack Problem~\ref{liftingproblem} for curves of genus three, four and five, respectively,
where we restrict ourselves
to finite fields $\FF_q$ having odd characteristic. Each of these
sections is organized in a stand-alone way, as follows:
\begin{itemize}
  \item In a first part we classify curves by their $\FF_q$-gonality $\gamma$ and solve Problem~\ref{liftingproblem} in its basic 
  version (except for some pathological cases such as pentagonal curves in genus four or hexagonal curves in genus five, which
  are irrelevant for point counting because these can only exist over extremely small fields).
   If the reader is interested in such a basic solution only, he/she can skip the other parts, which are more technical. 
  \item Next, in an optimization part we take into account the fact that the actual input to the algorithm from~\cite{tuitman1,tuitman2} must be monic when considered as a polynomial in $y$.
  This is easily achieved:
if we write
\[ f = f_0(x)y^\gamma + f_1(x)y^{\gamma - 1} + \dots + f_{\gamma - 1}(x)y + f_\gamma(x), \]
then the birational transformation $y \leftarrow y / f_0(x)$ gives
\begin{equation} \label{mademonic} 
  y^\gamma + f_1(x)y^{\gamma - 1} + \dots + f_{\gamma - 1}(x)f_0(x)^{\gamma - 2}y + f_\gamma(x)f_0(x)^{\gamma - 1},
\end{equation}
which still satisfies (i), (ii) and (iii). But one sees that the degree in $x$ inflates, and this affects the running time.
We discuss how our basic solution to Problem~\ref{liftingproblem} can be enhanced such that (\ref{mademonic}) becomes a more compact expression.
\item  We have implemented the algorithms from this paper in the computer algebra system Magma. The resulting package is
called \verb{goodmodels{ and can be found at the webpage \url{http://perswww.kuleuven.be/jan_tuitman}. In a third part we report on this 
implementation and on how it performs in composition with the algorithm from~\cite{tuitman1,tuitman2} for computing Hasse-Weil zeta functions. 
We give concrete runtimes, memory usage and failure rates, but avoid a detailed complexity analysis, because in any case the lifting step is heavily dominated 
by the point counting step. All computations were carried out with Magma v2.22 on a single Intel Core i7-3770 CPU running at 3.40 GHz. The code 
used to generate the tables with running times, memory usage and failure rates can be found in the subdirectory \verb{./profiling{ of \verb{goodmodels{.
\end{itemize}
As we will see, the case of trigonal curves of genus five provides a natural transition to 
the study of general curves of $\FF_q$-gonality $3$ and $4$. These are discussed in
Section~\ref{section_lowgonality}, albeit in a more sketchy way.

\paragraph*{Consequences}
 The main consequences of our work are that 
\begin{itemize}
  \item computing Hasse-Weil zeta functions using $p$-adic cohomology
has now become practical
on virtually all curves of genus at most five over finite fields $\FF_q$ of (small) odd characteristic, 
  \item the same conclusion for curves of $\FF_q$-gonality at most four looms around the corner, even though some hurdles remain, 
  as explained in Section~\ref{section_lowgonality},
  \item we have a better understanding of which $\FF_q$-gonalities can occur for curves of genus at most five, see the end of Section~\ref{section_firstfacts} for
  a summarizing table.
\end{itemize}
We stress that the general genus five curve, let alone the general tetragonal curve of any given genus,
cannot be tackled using any of the previous Kedlaya-style point counting algorithms, that were designed
to deal with elliptic curves~\cite{satoh}, hyperelliptic curves~\cite{denefvercauteren,harrison,harveylarger,hubdeform,kedlaya}, superelliptic curves~\cite{gaudrygurel,minzlaff}, $C_{ab}$ curves~\cite{CHV,DVCab,walker} and nondegenerate curves~\cite{CDV,tuitmanthesis},
in increasing order of generality. We refer to~\cite{CV} for
a discussion of which classes of curves do admit a nondegenerate model.

%In terms of moduli the locus of nondegenerate curves has dimension $\min \{ 3g-3, 2g + 1\}$ as soon as $g \geq 2$, except for $g = 7$ where the dimension reads $2g+2$; see~\cite{CV}. For instance in genus five, non-degenerate curves
%have $11$ moduli, whereas general curves have $12$. More generally
%the space of curves of geometric gonality at most four has dimension $\min \{ 3g-3, 2g+3 \}$. %by~\cite[\S8]{arbarello}${}^\dagger$.
%There do exist more general-purpose $p$-adic algorithms, but these are not expected to be practical; see e.g.~\cite{harveyarith}.

\paragraph*{A reference problem ($\dagger$)} At sporadic places in this article, we refer to a paper that develops
its theory over $\CC$ only, while in fact we need it over other fields, such as $\overline{\FF}_q$. 
This concern mainly applies to the theory of genus five curves due to Arbarello, Cornalba, Griffiths and Harris~\cite[VI.\S4.F]{cornalba}. We are convinced that most of the time
this is not an issue (the more because we rule out even characteristic) but we did not sift every one of these references to the bottom to double-check this: we content ourselves with the fact that things work well in practice.  
In our concluding
Section~\ref{section_lowgonality} on trigonal and tetragonal curves, the field characteristic becomes a more serious issue, for instance in the Lie algebra
method developed by de Graaf, Harrison, P\'ilnikov\'a and Schicho~\cite{GHPS}. More comments on this will be given there.
%which is one of the reasons why our treatment becomes a bit more sketchy in Section~\ref{section_lowgonality}.
Each time we cite a $\CC$-only (or characteristic zero only) reference whose statement(s) we carry over
to finite characteristic without having verified the details, we will
indicate this using the dagger symbol $\dagger$.

\paragraph*{Acknowledgements} We would like to thank Arnaud Beauville, Tom De Medts, Jeroen Demeyer, Steve Donnelly and Josef Schicho for answering several of our
questions. A large part of this paper was prepared while the first author was affiliated with the University of Ghent. 
The second author is a postdoctoral research fellow of the Research Foundation Flanders (FWO). Further support for this research was received from the project G093913N of the Research Foundation Flanders (FWO) and
from
the European Commission through the European Research
Council under the FP7/2007-2013 programme with ERC Grant Agreement 615722 MOTMELSUM.

\section{Background} \label{section_preliminary}

\subsection{First facts on the gonality} \label{section_firstfacts}

Let $k$ be a field and let $C$ be a curve over $k$.
The geometric gonality $\gamma_\text{geom}$ of $C$ is a classical invariant. It is $1$ if and only if the genus of $C$ equals $g = 0$, while for curves
of genus $g \geq 1$, by Brill-Noether theory $\gamma_\text{geom}$ lies in the range 
\[ 2, \dots, \lceil g / 2 \rceil + 1. \]
For a generic curve the upper bound $\lceil g / 2 \rceil + 1$ is met~\cite{CDPR}, but in fact 
each of the foregoing values can occur: inside the moduli space of curves of genus $g \geq 2$ the corresponding locus has dimension $\min \{ 2g - 5 + 2\gamma_\text{geom}, 3g - 3 \}$; see~\cite[\S8]{arbarello}${}^\dagger$.
From a practical point of view, determining the geometric gonality of a given curve is usually a non-trivial computational task, although in theory it can be 
computed using so-called scrollar syzygies~\cite{weimann}.

In the arithmetic (= non-geometric) case the gonality has seen much less study, even for classical fields
such as the reals~\cite{coppensmartens}. 
Of course $\gamma_\text{geom}$ is always less than or equal to the $k$-gonality $\gamma$, but the inequality may be strict.
In particular the Brill-Noether upper bound $\lceil g / 2 \rceil + 1$ is no longer valid. 
For curves of genus $g = 1$ over certain fields $\gamma$ can even be arbitrarily large~\cite{clark}.
As for the other genera, using the canonical or anticanonical linear system one finds 
\begin{itemize}
  \item if $g = 0$ then $\gamma \leq 2$,
  \item if $g \geq 2$ then $\gamma \leq 2g-2$.
\end{itemize}
These bounds can be met. We refer to~\cite[Prop.\,1.1]{poonengonality} and the references therein for
precise statements, along with some additional first facts.

If $k = K$ is a number field 
then the notion of $K$-gonality has enjoyed more 
attention, both from
a computational~\cite{derickx,derickxvanhoeij} and a theoretical~\cite{poonengonality} point of view, especially
in the case where $C$ is a modular curve. This is due
to potential applications towards effective versions of the uniform boundedness conjecture; see~\cite{sutherland} for an overview. In the non-modular case not much literature
seems available, but our rash guess would be that almost all (in any honest sense) curves of genus $g \geq 2$ over $K$ meet the upper bound $\gamma \leq 2g - 2$. This is distantly supported by the Franchetta conjecture; see again~\cite[Prop.\,1.1]{poonengonality} and the references therein for a more extended discussion.

Over finite fields $k = \FF_q$ the notion has
attracted the attention of coding theorists in the context of Goppa codes~\cite{tsfasman}. They proved the following result:

\begin{lemma}
  If the $\overline{C}$ is a curve over a finite field $\FF_q$ then its $\FF_q$-gonality is at most $g + 1$. Moreover, if
  equality holds then $g \leq 10$ and $q \leq 31$.
\end{lemma}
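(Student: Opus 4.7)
The plan is to handle the two assertions separately: the bound $\gamma \leq g+1$ is a quick application of Riemann--Roch, while the characterization of the equality case will require confronting the class number against a lower bound for the number of effective divisors of degree $g$.

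First I would invoke F.~K. Schmidt's theorem that any curve over a finite field carries an $\FF_q$-rational divisor $D_1$ of degree one (not necessarily effective; this is what makes the zeta function of a curve over $\FF_q$ rational in the classical way). Riemann--Roch applied to $(g+1)D_1$ gives
\[ h^0\bigl((g+1)D_1\bigr) \;\geq\; (g+1) - g + 1 \;=\; 2, \]
so there exists a non-constant $f\in L((g+1)D_1)\subset\FF_q(\overline{C})$, and the induced morphism $\overline{C}\to\PPq^1$ has degree at most $g+1$. This establishes $\gamma\leq g+1$.

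For the equality case, the key observation is that $\gamma=g+1$ forces every $\FF_q$-rational line bundle $L$ of degree $g$ to satisfy $h^0(L)=1$: by Riemann--Roch $h^0(L)\geq 1$, and $h^0(L)\geq 2$ would produce a $g^1_g$, contradicting $\gamma>g$. Consequently, the Abel--Jacobi map $\operatorname{Sym}^g(\overline{C})(\FF_q)\to\operatorname{Pic}^g(\overline{C})(\FF_q)$ is a bijection on $\FF_q$-points, and the number $a_g$ of effective $\FF_q$-rational divisors of degree~$g$ coincides with the class number $h=|\operatorname{Pic}^0(\overline{C})(\FF_q)|=L(1)$.

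From here I would play two estimates against each other. The Weil bound gives the upper estimate $h \leq (\sqrt{q}+1)^{2g}$. On the other hand $a_g$ is a coefficient of the zeta function and expands as $\sum_{\sum_r r n_r=g}\prod_r \binom{B_r+n_r-1}{n_r}$, where $B_r$ is the number of closed points of degree $r$; together with Serre's refinement $|N_r-q^r-1|\leq g\lfloor 2q^{r/2}\rfloor$ of the Weil bound on $N_r=\sum_{d\mid r}dB_d$, this furnishes a lower bound for $a_g$ that grows with $q$ and $g$. Contrasting the two should rule out all but finitely many pairs $(q,g)$, and in particular pin down $g\leq 10$ and $q\leq 31$.

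The main obstacle I expect is making the final comparison quantitatively sharp enough to yield the explicit thresholds rather than some much larger constants: naïve Weil estimates leave a lot of slack, so one likely needs to track the Frobenius eigenvalues more carefully, either through Serre-style integrality constraints on $L(T)$ or via linear-programming bounds on its coefficients, and to supplement the lower bound for $a_g$ by the symmetries coming from the functional equation $a_{g-k}=q^{-k}a_{g+k}$ (which, combined with $a_g=h$, already forces divisibility relations such as $q\mid h$).
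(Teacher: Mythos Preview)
The paper gives no self-contained proof of this lemma; it simply cites \cite[\S4.2]{tsfasman}. So the comparison is really between your sketch and the argument in that reference.

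Your first paragraph is a complete and correct proof of the bound $\gamma\leq g+1$: Schmidt's degree-one divisor plus Riemann--Roch on $(g+1)D_1$ is exactly the standard argument, and it is what the cited source does.

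For the equality case, your reduction is also the right one and matches the literature: if $\gamma=g+1$ then every $\FF_q$-rational degree-$g$ line bundle has $h^0=1$, whence $a_g=h=L(1)$, and one then squeezes $h$ between the Weil upper bound $(\sqrt{q}+1)^{2g}$ and a lower bound for $a_g$ coming from the zeta-function expansion and the functional equation. You correctly flag the only real difficulty, namely that naive estimates are too slack to produce the sharp cutoffs $g\leq 10$ and $q\leq 31$; the cited source does the bookkeeping. So your proposal is a faithful outline of the existing proof rather than a different route, with the quantitative endgame left as the acknowledged gap.
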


\begin{proof} See \cite[\S4.2]{tsfasman}. \end{proof}

In \cite[\S4.2]{tsfasman} it is stated as an open problem to find tighter bounds for the $\FF_q$-gonality. 
In fact we expect the sharpest possible upper bound to be $\lceil g/2 \rceil + 1 + \varepsilon$ for some small $\varepsilon$; maybe $\varepsilon \leq 1$ is sufficient
as soon as $q$ is large enough. 
%As an informal motivation for this, note that for schemes over finite fields, such as the Brill-Noether loci of $\overline{C}$, it is easy to acquire rational points, a fact which contrasts with the number field case.
A byproduct of this paper is a better understanding of which $\FF_q$-gonalities can occur for curves of genus at most five, 
in the cases where $q$ is odd (the cases where $q$ is even should be analyzable in a similar way). The following table summarizes this. 
\begin{center}
\small
\begin{tabular}{c|c|c|c|c}
  $g$ & Brill-Noether & possible $\FF_q$-gonalities & possible $\FF_q$-gonalities & $B$ \\
         &  upper bound &  (union over all odd $q$)            & (for a given odd $q > B$)             &        \\
  \hline
  $0$ & $1$ & $1$ & 1 & 1 \\
  $1$ & $2$ & $2$ & 2 & 1 \\
  $2$ & $2$ & $2$ & 2 & 1 \\
  $3$ & $3$ & $2,3,4$ & $2,3$ & $29$\\
  $4$ & $3$ & $2,3,4,5$ & $2,3,4$ & $7$\\
  $5$ & $4$ & $2,3,4,5,6^?$ & $2,3,4,5$ & $3$\\
\end{tabular}
\end{center}
For background we refer to Section~\ref{section_preliminary_comments} (for $g \leq 2$), Lemma~\ref{genus3gonality} (for $g=3$), Lemma~\ref{genus4gonality} (for $g=4$), and Lemma~\ref{genus5gonality}, Remark~\ref{remarkjeroen} and Remark~\ref{remarkgon6} (for $g=5$).
The question mark indicates that over $\FF_3$ there might exist curves of genus $g=5$ having $\FF_3$-gonality $6$, but there also might not exist such curves, see
Remark~\ref{remarkgon6}.

%\subsection{Pointless curves}
%
%For use below we gather lower bounds on $q$ (assumed odd) from which point on a complete non-singular curve of genus $g \leq 5$ over $\FF_q$ is 
%guaranteed to have
%at least one rational point:
%\begin{center}
%\small
%\begin{tabular}{c|c|c|c|c}
%  $g$ & $q >\, ?$ &  reference     \\
%  \hline
%  $0$ & $1$  & Hasse-Weil bound \\
%  $1$ & $1$  & Hasse-Weil bound \\
%  $2$ & $11$  & Stark~\cite{todo} \\
%  $3$ & $29$  & Howe-Lauter-Top~\cite{howe}  \\
%  $4$ & $49$  & Howe-Lauter-Top~\cite{howe} \\
%  $5$ & $89$  & Serre-Weil bound \\        
%\end{tabular}
%\end{center}
%For an extended discussion and some open problems, see~\cite{howe}.

\subsection{Baker's bound} \label{section_bakersbound}

Throughout a large part of this paper we will use the convenient language of Newton polygons.
Let \hfill \phantom{x}
\begin{wrapfigure}{r}{3.1cm} 
  \hfill \includegraphics[width=3cm]{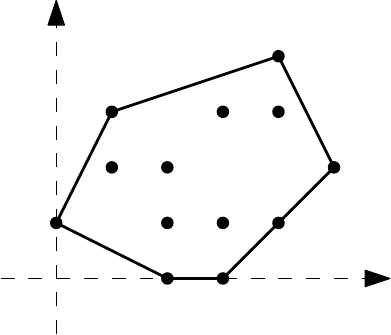} 
\end{wrapfigure}  
\[ f = \sum_{(i,j) \in \ZZ_{\geq 0}^2} c_{i,j} x^iy^j \in k[x, y] \] 
be an irreducible polynomial over a field $k$. 
Then its Newton polygon
$ \Delta(f)$ is defined as
$\conv \left\{ \, \left. (i,j) \in \ZZ_{\geq 0}^2 \, \right| \, c_{i,j} \neq 0 \,  \right\} \subset \RR^2 $. Note that $\Delta(f)$ lies in the first quadrant and meets 
the coordinate axes in at least one point each, by the irreducibility of $f$. 
Let $C$ be the affine curve that is cut out by $f$. 
Then one has the following bounds on the genus and the gonality of $C$, purely in terms of the combinatorics of $\Delta(f)$.

\paragraph*{Genus} The genus of $C$ is at most \emph{the number of points in the interior} of $\Delta(f)$ having integer coordinates:
  this is Baker's theorem. See~\cite[Thm.\,2.4]{beelen} for an elementary proof and~\cite[\S10.5]{coxlittleschenck} for a more conceptual version
  (using adjunction theory on toric surfaces).  
  If one fixes the Newton polygon then Baker's bound on the genus is generically attained, i.e.\ meeting the bound is a 
non-empty Zariski-open condition; this result is essentially due to Khovanskii \cite{khovanskii}.
An explicit sufficient generic condition is that $f$ is nondegenerate with respect to its Newton polygon~\cite[Prop.\,2.3, Cor.\,2.8]{CDV}.

\paragraph*{Gonality} The $k$-gonality is at most the \emph{lattice width} $\lwidth(\Delta(f))$ of $\Delta(f)$.
  By definition, the lattice width is the minimal height $d$ of a horizontal strip
  \[ \left\{ \left. \, (a,b) \in \RR^2 \, \right| \, 0 \leq b \leq d \, \right\} \]
  inside which $\Delta(f)$ can be mapped using a unimodular transformation, i.e.\ an affine transformation of $\RR^2$ with linear
  part in $\GL_2(\ZZ)$ and translation part in $\ZZ^2$. 
  \begin{center}
  \includegraphics[width=9cm]{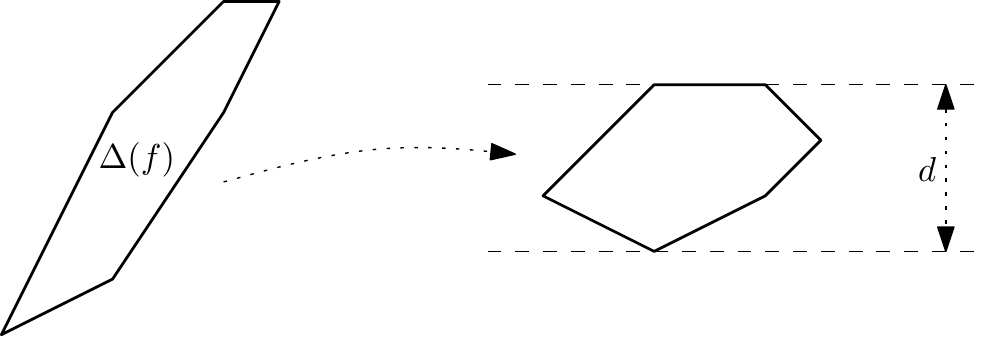}
  \end{center}
  This is discussed in \cite[\S2]{caco}, but briefly the argument goes as follows.
  By applying the same transformation to the exponents, which is a $k$-rational birational change of variables, our
  polynomial $f$ can be transformed along with its Newton polygon. When orienting $f$ in this way one 
  obtains $\deg_y f = \lwidth(\Delta(f))$, and
  the gonality bound follows by considering the $k$-rational map $(x,y) \mapsto x$. 
 % Then also
 % the geometric gonality of $C$ is bounded by $\lwidth(\Delta(f))$. 
 If a unimodular transformation can be used to
  transform $\Delta(f)$ into
  \begin{center}
  \polfig{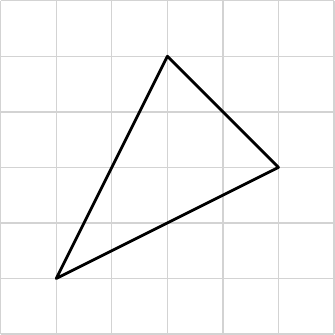}{2.4}{2.4}{$2 \Upsilon$} 
  \begin{minipage}[b]{1cm}
  \begin{center}
  or 
  \end{center}
  \vspace{0.65cm}
  \end{minipage}
  \polfig{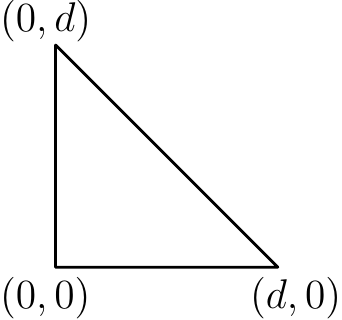}{2.4}{2.4}{$d \Sigma$} 
  \end{center}
 % \[ 2\Upsilon := \conv \{(-2,-2), (2,0), (0,2)\} \quad \text{or} \quad d\Sigma := \conv \{(0,0), (d,0), (0,d)\} \text{ for $d \geq 2$}  \]
  for $d \geq 2$, then the \emph{geometric} gonality enjoys the sharper bound $\lwidth(\Delta(f)) - 1$ (amounting to $3$ resp.\ $d-1$); see \cite[Thm.\,3]{caco}. 
  %In this case one cannot always conclude that the $k$-gonality satisfies the same bound. 
  If one fixes the Newton polygon then the sharpest applicable foregoing upper bound on the geometric gonality, i.e.\ 
  \begin{itemize}
    \item $\lwidth(\Delta(f)) - 1$ in the exceptional cases $2\Upsilon$, $d\Sigma$ ($d \geq 2$),
    \item $\lwidth(\Delta(f))$ in the non-exceptional cases,
  \end{itemize}
  is generically met, and again
  nondegeneracy is a sufficient condition~\cite[Cor.\,6.2]{linearpencils}. In fact, the slightly weaker condition of meeting Baker's genus bound
  is already sufficient \cite[\S4]{linearpencils}.
  
  \begin{remark} The results from~\cite{linearpencils} 
  are presented in characteristic zero only, but~\cite[Cor.\,6.2]{linearpencils} holds in finite characteristic too, as can be seen as follows. Assume for simplicity that $\Delta(f)$
  is not of the form $2\Upsilon$ or $d\Sigma$ for some $d \geq 2$, these cases are easy to deal with separately. Suppose that $C$ meets Baker's bound but that the gonality of $C$ is strictly less than $\lwidth(\Delta(f))$, say
  realized by a map $\pi : C \rightarrow \PPq^1$.
  We split this map in the usual way into a purely inseparable and a separable part 
  \[ C \stackrel{F_q}{\longrightarrow} C^{F_q} \stackrel{\pi_s}{\longrightarrow} \PPq^1, \]
  where $F_q$ denotes an appropriate Frobenius power and $C^{F_q}$ is the curve defined by $f^{F_q}$, the polynomial obtained by applying $F_q$ to each coefficient of $f$. Note that $\Delta(f) = \Delta(f^{F_q})$, so one sees that $C^{F_q}$ also meets Baker's bound because Frobenius preserves the genus~\cite[Prop.\,IV.2.5]{hartshorne}. Clearly $\deg \pi_s < \lwidth(\Delta(f^{F_q}))$. Now the crucial ingredient in the proof of~\cite[Cor.\,6.2]{linearpencils} is a theorem due to Serrano on the possibility of extending morphisms from curves to ambient surfaces,
which assumes $\chr k = 0$. However as Serrano points out~\cite[Rmk.\,3.12]{serrano} his theorem also holds in finite characteristic, provided that the morphism is separable, the ambient surface $S$ is rational, and $h^0(\mathcal{O}_S(C))$ is large enough compared to the degree of the morphism to be extended. The reader can verify that these conditions are satisfied when applying the proof of~\cite[Thm.\,6.1]{linearpencils} to $\pi_s$, leading to the conclusion that it is necessarily of the form $(x,y) \mapsto x^ay^b$ for some pair of coprime integers $a,b$. This contradicts that $\deg \pi_s < \lwidth(\Delta(f^{F_q}))$.
  \end{remark}
  
  Summing up in the non-geometric case, if we are not in the exceptional cases $2 \Upsilon, d\Sigma$ ($d \geq 2$) then meeting Baker's bound
  is sufficient for the $k$-gonality to equal $\lwidth(\Delta(f))$. In the exceptional cases the $k$-gonality is either $\lwidth(\Delta(f))$ or $\lwidth(\Delta(f)) - 1$.\\

\noindent This yields 
a large class of defining polynomials $\overline{f} \in \FF_q[x,y]$ for which finding an 
$f \in \mathcal{O}_K[x,y]$ satisfying (i), (ii) and (iii) is easy. Indeed, by semi-continuity the
genus cannot increase under reduction modulo $p$. Therefore
if $\overline{f}$ attains Baker's upper bound on the genus, then it suffices to pick
 any $f \in \mathcal{O}_K[x,y]$ that reduces to $\overline{f}$ mod $p$, in such a way that $\Delta(f) = \Delta(\overline{f})$:
 the corresponding curve $C / K$ necessarily attains Baker's upper bound, too.
% We will often refer to such a lift as as a naive Newton polygon preserving lift.
If moreover we are not in the exceptional cases $2\Upsilon$ and $d\Sigma$ ($d \geq 2$), then from the foregoing discussion we
know that both the $\FF_q$-gonality of $\overline{C}$ and the $K$-gonality of $C$ are equal to
$\lwidth(\Delta(\overline{f})) = \lwidth(\Delta(f))$. A unimodular transformation then ensures that $\deg_y f =
\lwidth(\Delta(f))$ as desired; 
such a transformation is computationally easy to find~\cite{feschet}.

It is therefore justifiable to say
that conditions (i), (ii) and (iii) are easy to deal with for almost all polynomials $\overline{f} \in \FF_q[x,y]$.
But be cautious: this does not mean that almost all \emph{curves} $\overline{C} / \FF_q$ are defined by such a polynomial. In terms of moduli, the locus of curves for which this is true has dimension $2g+1$, except if $g=7$ where it is $16$; see \cite[Thm.\,12.1]{CV}. Recall that the moduli space of curves of genus $g$ has dimension $3g - 3$, so as soon as $g \geq 5$ the defining polynomial $\overline{f}$ of a plane model of a generic curve $\overline{C}/ \FF_q$ of genus $g$ can never attain Baker's bound.
For such curves, the foregoing discussion becomes
\emph{counterproductive}: if we take a naive coefficient-wise lift $f \in \mathcal{O}_K[x,y]$ of $\overline{f}$, then it is very likely to satisfy Baker's bound,
causing an increase of genus. This shows that $f$ has to be constructed with more care, which
is somehow the main point of this article.

\subsection{Preliminary discussion} \label{section_preliminary_comments}

We will attack Problem~\ref{liftingproblem} in the cases where the genus $g$ of $\overline{C}$ is at most five (in Section~\ref{section_lowgenus}) or
the $\FF_q$-gonality $\gamma$ of $\overline{C}$ is at most four (in Section~\ref{section_lowgonality}), where we recall
our overall assumption that $q$ is odd. In this section we quickly discuss the cases where $g$ and/or $\gamma$ are at most $2$.

\begin{remark}
Note that for the purpose of computing the Hasse-Weil zeta function using
the algorithm from~\cite{tuitman1,tuitman2}, 
the characteristic $p$ of $\FF_q$ should moreover not be too large: this restriction is common 
to all $p$-adic point counting algorithms. For the lifting methods described in the current paper,
the size of $p$ does not play a role.
\end{remark}

%The emphasis lies on the practical side, especially so in the paragraphs on optimization in low genus.
%In 
%particular we will avoid
%extensive discussion of small $q$ pathologies, 
%even though it should be feasible
%for the reader to make the necessary adaptations for very small $q$ him or herself.
%We will occasionally extrapolate a statement from the literature that is available in characteristic
%$0$ only: this mainly applies to the theory of genus five curves due to Arbarello, Cornalba, Griffiths and Harris~\cite[VI.\S4.F]{cornalba},
%and to some statements on tetragonal curves due to Schreyer~\cite{schreyer}.
%But we will always be explicit about this.\todo{op het einde nog eens allemaal checken} 

%\unsure{In geslacht 5 moet hier en daar nog een zinnetje om dit op te vangen. Sowieso op het einde nog eens allemaal controleren.}
%In a subsequent paper~\cite{selfref}
%we will show that our methods naturally generalize to curves of $\FF_q$-gonality $\gamma$ at most four. Recall from
%Section~\ref{section_firstfacts} that the locus of such curves has dimension $2g+3$, so in terms of moduli
%this concerns a substantial generalization of what was known previously.\\

If $\overline{C}$ is a curve of genus $g = 0$ then 
we can assume that $\overline{C} = \mathbb{P}^1$, because every plane conic carries at least one $\FF_q$-point, and projection from that point 
gives an isomorphism to the line. In particular $\gamma = 1$ if and only if $g = 0$, in which case 
Problem~\ref{liftingproblem} can be addressed by simply outputting $f = y$.

Next, if $g = 1$ then we can assume that $\overline{C}$ is defined by a polynomial $\overline{f} \in \FF_q[x,y]$ in Weierstrass form, i.e.\ $\overline{f} = y^2 - \overline{h}(x)$ for some squarefree cubic $\overline{h}(x) \in \FF_q[x]$. In this case $\gamma = 2$, and any $f \in \mathcal{O}_K[x,y]$
for which $\Delta(f) = \Delta(\overline{f})$ will address Problem~\ref{liftingproblem} (for instance because Baker's bound is attained, or because a non-zero discriminant must lift to a non-zero discriminant).

Finally, 
if $g \geq 2$ then $\overline{C}$ is geometrically hyperelliptic if and only if $\kappa$ realizes $\overline{C}$ as a degree $2$ cover
of a curve of genus zero \cite[IV.5.2-3]{hartshorne}. By the foregoing discussion the latter is isomorphic to $\PPq^1$, and therefore every geometrically hyperelliptic curve $\overline{C} / \FF_q$ admits an $\FF_q$-rational degree $2$ map to $\PPq^1$. In particular, one can unambiguously talk about hyperelliptic curves over $\FF_q$. In this case it is standard how to produce a defining polynomial $\overline{f} \in \FF_q[x,y]$
that is in Weierstrass form, i.e.\ $\overline{f} = y^2 - \overline{h}(x)$ for some squarefree $\overline{h}(x) \in \FF_q[x]$. Then again any $f \in \mathcal{O}_K[x,y]$
for which $\Delta(f) = \Delta(\overline{f})$ will address Problem~\ref{liftingproblem}.

\begin{remark} \label{remark_gonalityoverFq}
Let $g^1_d$ be a complete base-point free $\FF_q$-rational linear pencil of degree~$d$ on a non-singular projective curve $\overline{C} / \FF_q$. Then
from standard arguments in Galois cohomology (that are specific to finite fields) it follows that this $g^1_d$
automatically contains an $\FF_q$-rational effective divisor, which can 
be used to construct 
an $\FF_q$-rational map to $\PPq^1$ of degree $d$. See for instance the proof of~\cite[Lem.\,6.5.3]{gilles}. 
This gives another way of seeing that a geometrically hyperelliptic curve over $\FF_q$ is automatically $\FF_q$-hyperelliptic,
because the hyperelliptic pencil $g^1_2$ is unique, hence indeed defined over $\FF_q$. 
The advantage of this argument is that it is more flexible: for instance it
also shows that a geometrically trigonal curve $\overline{C} / \FF_q$ of genus $g \geq 5$ always
admits an $\FF_q$-rational degree $3$ map to $\PPq^1$, again because 
the $g^1_3$ on such a curve is unique. So we can unambiguously talk about trigonal curves from genus five on.
\end{remark}

Summing up, throughout the paper, it suffices to consider curves of $\FF_q$-gonality $\gamma > 2$, so that the canonical map $\kappa: \overline{C} \rightarrow \PPq^{g-1}$ is an embedding. In particular we have $g \geq 3$.
From the $p$-adic point counting viewpoint, all omitted cases are covered by the algorithms of Satoh~\cite{satoh} and Kedlaya~\cite{harrison,kedlaya}.
%We recall our overall assumption that $q$ is odd, and that for point counting purposes the characteristic $p$ should be small.

\section{Curves of low genus} \label{section_lowgenus}

\subsection{Curves of genus three} \label{section_genus3}

\subsubsection{Lifting curves of genus three} \label{basicsolutiontogenus3}

Solving Problem~\ref{liftingproblem} in genus three in its basic version is not hard, so we consider this as a warm-up discussion. We first
analyze which $\FF_q$-gonalities can occur:

\begin{lemma} \label{genus3gonality}
Let $\overline{C} / \FF_q$ be a non-hyperelliptic curve of genus $3$ and 
$\FF_q$-gonality $\gamma$, and assume that $q$ is odd. If $\# \overline{C}(\FF_q) = 0$ then $\gamma = 4$, while if $\# \overline{C}(\FF_q) > 0$ (which is guaranteed if $q > 29$) then $\gamma = 3$.

\end{lemma}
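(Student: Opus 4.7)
The plan is to use the canonical embedding. Since $\overline{C}$ is non-hyperelliptic of genus three, $\kappa$ realizes $\overline{C}$ as a smooth plane quartic in $\PPq^2$; in particular $\gamma \geq 3$, as the hyperelliptic case is excluded.

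If $\# \overline{C}(\FF_q) > 0$, I would pick an $\FF_q$-rational point $P$ on $\overline{C}$ and project from $P$ inside $\PPq^2$. A line through $P$ meets the quartic in three further points (counted with multiplicity), so this projection is an $\FF_q$-rational morphism $\overline{C} \to \PPq^1$ of degree $3$, forcing $\gamma = 3$.

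The substantive case is $\# \overline{C}(\FF_q) = 0$, where I have to show $\gamma = 4$. The upper bound $\gamma \leq 4$ is immediate: under the pointlessness hypothesis every $\FF_q$-point of $\PPq^2$ automatically lies off $\overline{C}$, and projecting from any such point yields an $\FF_q$-rational morphism $\overline{C} \to \PPq^1$ of degree $4$. For the matching lower bound $\gamma \geq 4$ I argue by contradiction: suppose $\overline{C}$ carried an $\FF_q$-rational $g^1_3$. By Remark~\ref{remark_gonalityoverFq} this pencil contains an $\FF_q$-rational effective divisor $D$ of degree $3$, with $h^0(D) \geq 2$. Riemann-Roch on the genus-three curve $\overline{C}$ then gives $h^0(D) - h^0(K - D) = \deg D + 1 - g = 1$, whence $h^0(K - D) = 1$. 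Thus $K - D$ is an $\FF_q$-rational divisor class of degree $1$ with a \emph{unique} effective representative, which by uniqueness must be Galois-stable, and is therefore an $\FF_q$-rational point of $\overline{C}$, contradicting the pointlessness assumption.

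The side assertion that $q > 29$ forces $\# \overline{C}(\FF_q) > 0$ is not really an obstacle for the bi-implication itself; it follows from known sharpenings of the Hasse-Weil bound (the crude estimate $\# \overline{C}(\FF_q) \geq q + 1 - 6\sqrt{q}$ already settles $q \geq 37$, and Serre/Oesterl\'e-type refinements together with a finite check over the remaining small odd prime powers pin down $29$ as the precise threshold). The main conceptual obstacle in the argument itself is the pencil-to-point transfer: passing from an $\FF_q$-rational $g^1_3$ to an actual $\FF_q$-rational point via Riemann-Roch together with the uniqueness of the effective representative of a degree-one divisor class on a positive-genus curve.
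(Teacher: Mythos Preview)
Your argument is correct and diverges from the paper at the key step. To show that pointlessness forces $\gamma > 3$, the paper invokes the geometric classification (Serrano~\cite{serrano}, with Homma~\cite{homma} for positive characteristic) that \emph{every} $g^1_3$ on a smooth plane quartic is projection from a point of the curve; hence an $\FF_q$-rational $g^1_3$ would have to come from an $\FF_q$-rational point. You instead run Riemann--Roch directly: from an $\FF_q$-rational effective $D$ of degree $3$ with $h^0(D) \geq 2$ you get $h^0(K-D) \geq 1$, and since $\deg(K-D) = 1$ on a positive-genus curve one also has $h^0(K-D) \leq 1$, so the unique effective member of $|K-D|$ is Galois-fixed and is the sought $\FF_q$-point. (Your write-up asserts $h^0(K-D) = 1$ a line early, before recording the degree-one bound $h^0(K-D) \leq 1$; this is a cosmetic gap only.) Your route is more elementary and self-contained, avoiding the external citation; the paper's route has the merit of identifying exactly which maps realise the gonality, which is precisely what the subsequent algorithm exploits when it projects from a rational point on the quartic. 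For the threshold $q > 29$ the paper simply cites \cite[Thm.~3(2)]{howe} rather than arguing via Weil-type bounds plus a finite check as you outline.
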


\begin{proof}
  Using the canonical embedding we can assume that $\overline{C}$ is a smooth plane quartic. It is classical that 
  such curves have geometric gonality $3$, and that each gonal map arises
  as projection from a point on the curve. For a proof see \cite[Prop.\,3.13]{serrano}, where things are formulated in characteristic zero, but the same argument works in positive characteristic; alternatively one can consult~\cite{homma}. In particular if there is no $\FF_q$-point then there is no rational gonal map and $\gamma > 3$.
  But then a degree $4$ map can be found by projection from an $\FF_q$-point outside the curve. By \cite[Thm.\,3(2)]{howe}
  there exist pointless non-hyperelliptic curves of genus three over $\FF_q$ if and only if $q \leq 23$ or $q = 29$.
\end{proof}

%The above lemma and proof also apply to characteristic two, modulo a replacement of `which is guaranteed if $q > 29$' by `which is guaranteed 
%if $q > 32$'.
%This again follows from~\cite[Thm.\,3(2)]{howe}. \\

We can now address Problem~\ref{liftingproblem} as follows. As in the proof we assume that $\overline{C}$ is given as a smooth 
quartic in $\PPq^2$.
First suppose that $\# \overline{C}(\FF_q) = 0$. Because this is possible for $q \leq 29$ only, the occurrence of this event can
be verified exhaustively. In this case
the Newton polygon
of the defining polynomial
$\overline{f} \in \FF_q[x,y]$ 
  of the affine part of $\overline{C}$ equals:
\begin{center}
\polfig{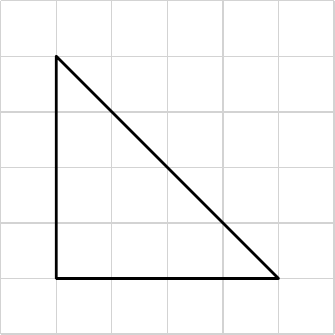}{2.4}{2.4}{$\Delta_3^{0,0}$}
\end{center}
In particular Baker's bound is attained, and a naive Newton polygon preserving lift 
$f \in \mathcal{O}_K[x,y]$ automatically addresses (i), (ii) and (iii). If $\#\overline{C}(\FF_q)>0$ then one
picks a random $\FF_q$-point $P$ (which can be found quickly) and one applies
a projective transformation that maps $P$ to $(0 : 1 : 0)$.
After doing so the Newton polygon of $\overline{f} \in \FF_q[x,y]$ becomes contained in (and typically equals):
\begin{center}
\polfig{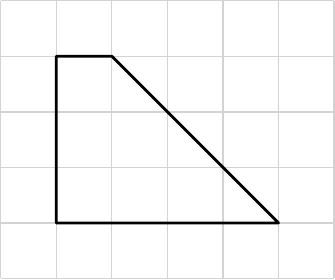}{2.4}{2}{$\Delta_3^{1,0}$}
\end{center}
Again Baker's bound is attained, and a naive Newton polygon preserving lift 
$f \in \mathcal{O}_K[x,y]$ satisfies (i), (ii) and (iii).

It is important to transform the curve \emph{before} lifting to characteristic $0$. 
Indeed, if one would immediately lift our input quartic to a curve $C \subset \PPK^2$ then it is highly likely that $C(K) = \emptyset$, and therefore
that the $K$-gonality equals $4$ (by the same proof as above). This type of reasoning plays an important role throughout the paper, often in a more subtle way than here.

\begin{remark}[purely notational]
The indices $i,j$ in $\Delta_3^{i,j}$ refer to the multiplicities of intersection of $\overline{C}$ with the line at infinity at
the coordinate points $(0:1:0)$ and $(1:0:0)$, assuming that it is defined by a polynomial having Newton polygon 
$\Delta_3^{i,j}$. Note that $\Delta_3^{0,0}$ is just another way of writing $3\Sigma$.
\end{remark}

\noindent \varhrulefill[0.4mm]
\vspace{-0.3cm}

\begin{algo} \label{algorithm_genus3}
Lifting curves of genus $3$: basic solution 

\vspace{-0.2cm}
\noindent \varhrulefill[0.4mm]

\noindent \textbf{Input:} non-hyperelliptic genus $3$ curve $\overline{C}$ over $\FF_q$

\noindent \textbf{Output:} lift $f \in \mathcal{O}_K[x,y]$ satisfying (i), (ii), (iii) that is supported

\noindent \qquad \qquad \qquad $\bullet$ on $\Delta_3^{0,0}$ if $\overline{C}(\FF_q) = \emptyset$, or else

\noindent \qquad \qquad \qquad $\bullet$ on $\Delta_3^{2,0}$ %if $\overline{C}(\FF_q) \neq \emptyset$\\

\vspace{-0.2cm}
\noindent \varhrulefill[0.4mm]

\noindent \small 1 \normalsize: $\overline{C} \gets \text{CanonicalImage}(\overline{C})$ in $\PPq^2 = \proj \FF_q[X,Y,Z]$

\noindent \small 2 \normalsize: \textbf{if} $q > 29$ or $\overline{C}(\FF_q) \neq \emptyset$ (verified exhaustively) \textbf{then}

\noindent \small 3 \normalsize: \qquad $P := \text{Random}(\overline{C}(\FF_q))$

\noindent \small 4 \normalsize: \qquad apply automorphism of $\PPq^2$ transforming $T_P(\overline{C})$ into $Z=0$

\noindent \small 5 \normalsize: \qquad \textcolor{white}{apply automorphism of $\PPq^2$ transforming} and $P$ into $(0:1:0)$

\noindent \small 6 \normalsize: \textbf{return} NaiveLift(Dehomogenization${}_Z$(DefiningPolynomial($\overline{C}$)))

\vspace{-0.2cm}
\noindent \varhrulefill[0.4mm]
\end{algo}

\subsubsection{Optimizations} \label{optim_genus3}

For point counting purposes we can of course assume that $q > 29$, so that $\gamma = 3$. By applying (\ref{mademonic}) to 
a polynomial with Newton polygon $\Delta_3^{1,0}$
one ends up with a polynomial that is monic in $y$ and that has degree $4 + (\gamma - 1) = 6$ in $x$. This can be improved: 
in addition to mapping $P$ to $(0:1:0)$, we can have its tangent line $T_P(\overline{C})$
sent to the line at infinity. If we then lift $\overline{f}$ to $\mathcal{O}_K[x,y]$ we find an $f$ whose Newton polygon is contained in (and typically equals):
\begin{center}
\polfig{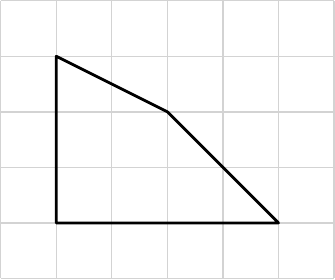}{2.4}{2}{$\Delta_3^\text{2,0}$}
\end{center}
In particular $f$ is monic (up to a scalar) and $\deg_x f \leq 4$. We can in fact achieve
  $\deg_x f = 3$ in all cases of practical interest. Indeed, with an asymptotic chance of $1/2$ our tangent line $T_P(\overline{C})$ intersects $\overline{C}$ 
  in two other rational points. The above construction leaves enough freedom to position one of those points $Q$ at $(1:0:0)$. The resulting lift $f$ then becomes contained in (and typically equals)
  \begin{center}
\polfig{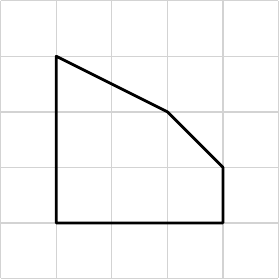}{2}{2}{$\Delta_3^\text{2,1}$}
\end{center}
In the case of failure we retry with another $P$. 
If $q > 59$ (say) then there are enough $\FF_q$-points $P \in \overline{C}$ for this approach to work with near certainty, although
there might exist sporadic counterexamples well beyond that point.
%\begin{remark}  There might exist (very) sporadic counterexamples well beyond that point. Indeed, the
%`good' points $P$ are in a natural correspondence with pairs of $\FF_q$-points on a double cover of $\overline{C}$ that ramifies above 
%up to $56$ points (two for each bitangent, in principle, but this number may be reduced by the presence of hyperflexes). By
%Riemann-Hurwitz this double cover is a curve of genus up to $33$, for which the lower Serre-Weil bound becomes positive for $q \geq 4327$ only.\\
%\end{remark}

%

\begin{remark}[non-generic optimizations] \label{genus3flexremark}
%Even though $\deg_x f = \deg_y f = 3$ is optimal, 
For large values of $q$ one might want to pursue a further compactification of the Newton polygon. Namely, if one manages to
choose $P \in \overline{C}(\FF_q)$ such that it is an ordinary flex or such that $T_P(\overline{C})$ is a bitangent, 
then $T_P(\overline{C})$ meets $\overline{C}$ in a unique other point $Q$, which is necessarily defined over $\FF_q$.
By proceeding as before one respectively ends up inside the first and second polygon below.
If one manages to let $P \in \overline{C}(\FF_q)$ be a non-ordinary flex, i.e.\ a hyperflex, then positioning it at $(0:1:0)$ results in a polygon of the third form:
     \begin{center}
\polfig{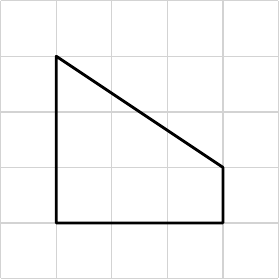}{2}{2}{$\Delta_3^\text{3,1}$} \hspace{-1cm}
\polfig{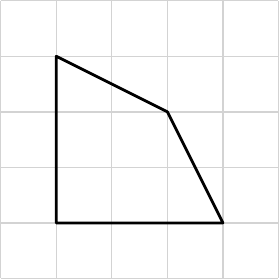}{2}{2}{$\Delta_3^\text{2,2}$} \hspace{-1cm}
\polfig{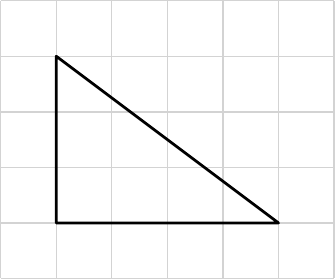}{2.4}{2}{$\Delta_3^\text{4,0}$} 
  \end{center}
Heuristically, as $q \rightarrow \infty$ we expect to be able to realize the first two polygons with probablities $1-1/e$ and $1 - 1/\sqrt{e}$, respectively; more background can be found in an \texttt{arXiv} version of our paper (\texttt{1605.02162v2}). In contrast
the hyperflex case $\Delta_3^\text{4,0}$ is very exceptional, but we included it in the discussion because it corresponds to the well-known class
  of $C_{3,4}$ curves: even though $\deg_x f = 4$ here, the corresponding point count is slightly faster.
\end{remark}

\subsubsection{Implementation} \label{section_genus3implementationandtimings}

We now report on timings, memory usage and failure rates of our implementation of the algorithms in this section for various values of $p$ and $q=p^n$. 
The first column in each table contains the time used to 
compute the lift to characteristic~$0$ averaged over $1000$ random examples. Then the second column gives the time used by the point 
counting code \verb{pcc{ from \cite{tuitman1,tuitman2} averaged over $10$ different random examples. Next, the third column contains the total 
memory used in the computation. Finally, the last column gives the number of examples out of the $1000$ where we did not find a lift satisfying 
\cite[Ass.\,1]{tuitman2}, which each time turned out to be $0$, i.e.\ we always found a good lift.

\bigskip

\noindent \scriptsize
\tabcolsep=0.11cm
\begin{tabular}{r||r|r|r|r}
             & time     &time    & space  & fails    \\
$p$          & lift(s)  &pcc(s)  & (Mb)   & /1000    \\
\hline \hline
$11$         & 0.2      &$0.2$   & $ 32$ &        0  \\
$67$         & 0.2      &$0.6$   & $ 32$ &        0  \\
$521$        & 0.2      &$4.2$   & $ 64$ &        0  \\
$4099$       & 0.2      & $41$   & $165$ &        0  \\
$32771$      & 0.2      &$590$   & $1124$ &        0 
\end{tabular}
\quad
\begin{tabular}{r||r|r|r|r}
             & time     &time    & space  & fails  \\
$q$          & lift(s)  &pcc(s)  & (Mb)   & /1000  \\
\hline \hline
$3^5$        & $0.4$    &$2.4$   & $ 64$  &     0  \\
$7^5$        & $0.4$    &$6.6$   & $ 64$  &     0  \\
$17^5$       & $0.4$    &$12$    & $ 76$  &     0  \\
$37^5$       & $0.4$    &$26$    & $124$  &     0  \\
$79^5$       & $0.4$    &$66$    & $241$  &     0 
\end{tabular}
\quad
\begin{tabular}{r||r|r|r|r}
                &time   &time     & space  & fails  \\
$q$             &lift(s)&pcc(s)   & (Mb)   & /1000  \\
\hline \hline
$3^{10}$        &$0.5$ &$15$     & $76$   &     0  \\
$7^{10}$        &$0.6$ &$40$     &$118$   &     0  \\
$17^{10}$       &$0.7$ &$82$     &$241$   &     0  \\
$37^{10}$       &$0.7$ &$181$    &$403$   &     0  \\
$79^{10}$       &$0.8$ &$473$    &$831$   &     0 
\end{tabular}

\bigskip

\normalsize

Alternatively, without using the methods from this section, we can just make any plane 
quartic monic using~(\ref{mademonic}), then lift naively to characteristic~$0$ and try to use this
lift as input for \verb{pcc{. This way, we obtain the following three tables. 

\bigskip 

\scriptsize
\tabcolsep=0.235cm
\noindent \begin{tabular}{r||r|r|r}
            & time     & space & fails  \\
$p$         & pcc(s)   & (Mb)  & /1000  \\
\hline \hline
$11$        & $0.4$    & $  32$ & 225   \\       
$67$        & $1.3$    & $  32$ &  52   \\       
$521$       & $8.7$    & $  76$ &   5   \\       
$4099$      & $83 $    & $ 307$ &   1   \\      
$32771$     &$1153$    & $2086$ &   0         
\end{tabular}
\quad
\begin{tabular}{r||r|r|r}
             & time    & space  & fails \\
$q$          & pcc(s)  & (Mb)   & /1000 \\
\hline \hline
$3^5$        & $6.1$   & $ 32$  & $13$  \\       
$7^5$        & $14$    & $ 32$  & $0$   \\       
$17^5$       & $32$    & $ 80$  & $0$   \\       
$37^5$       & $71$    & $156$  & $0$   \\      
$79^5$       &$161$    & $288$  & $0$       
\end{tabular}
\quad
\begin{tabular}{r||r|r|r}
                & time    & space   & fails \\
$q$             & pcc(s)  & (Mb)    & /1000 \\
\hline \hline
$3^{10}$        &  $42$   &  $76$   & $0$   \\       
$7^{10}$        &  $94$   & $124$   & $0$   \\       
$17^{10}$       & $248$   & $320$   & $0$   \\       
$37^{10}$       & $524$   & $589$   & $0$   \\      
$79^{10}$       &$1296$   &$1311$   & $0$       
\end{tabular}
\bigskip 
\normalsize

Comparing the different tables, we see that the approach described in this section saves a factor of about $3$ in runtime and a factor of about $2$ in memory usage. 
Moreover, for small fields the naive lift of a plane quartic sometimes does not satisfy \cite[Ass.\,1]{tuitman2}, while this never seems to be the case for 
the lift constructed using our methods.

\subsection{Curves of genus four} \label{section_genus4}

\subsubsection{Lifting curves of genus four} \label{section_genus4lifting}

By \cite[Ex.\,IV.5.2.2]{hartshorne} the ideal of a canonical model 
$\overline{C} \subset \PPq^3 = \text{Proj} \, \FF_q[X,Y,Z,W]$
of a non-hyperelliptic genus $g = 4$ curve is generated by a cubic $\overline{S}_3$ and a unique quadric $\overline{S}_2$. Since $q$ is assumed odd, the latter
can be written as
 \[  \begin{pmatrix} X & Y & Z & W \end{pmatrix} \cdot \overline{M} \cdot \begin{pmatrix} X & Y & Z & W  \end{pmatrix}^t, \qquad \overline{M} \in \FF_q^{4 \times 4}, \ \overline{M}^t = \overline{M}. \]
Let $\chi_2 : \FF_q \rightarrow \{0, \pm 1\}$ denote the quadratic character on $\FF_q$. 
Then $\chi_2(\det \overline{M})$ is an invariant of $\overline{C}$, which 
is called the discriminant.

If we let $S_2, S_3 \in \mathcal{O}_K[X,Y,Z,W]$ be homogeneous polynomials that
reduce to $\overline{S}_2$ and $\overline{S}_3$ modulo $p$, then by \cite[Ex.\,IV.5.2.2]{hartshorne}
these define a genus $4$ curve $C \subset \PPK^3$ over $K$, thereby addressing (i) and (ii). However 
as mentioned in Section~\ref{section_firstfacts} we
expect the $K$-gonality of $C$ to be typically $2g - 2 = 6$.
This exceeds the $\FF_q$-gonality of $\overline{C}$:

\begin{lemma} \label{genus4gonality}
Let $\overline{C} / \FF_q$ be a non-hyperelliptic curve of genus $4$ and $\FF_q$-gonality
$\gamma$, and assume that $q$ is odd. 
If the discriminant of $\overline{C}$ is $0$ or $1$ then $\gamma = 3$. 
If it is $-1$ and $\# \overline{C}(\FF_{q^2}) > 0$ (which is guaranteed if $q > 7$) then $\gamma = 4$. 
Finally, if it is $-1$ and $\# \overline{C}(\FF_{q^2}) = 0$ then $\gamma = 5$.  
\end{lemma}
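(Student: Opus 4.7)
The plan is to combine the classical correspondence between trigonal pencils $g^1_3$ on $\overline{C}$ and rulings of the quadric $\overline{S}_2 = 0$ with a Serre-dual parametrization of $g^1_4$'s. I would analyze $\overline{S}_2$ according to the rank of $\overline{M}$: if $\det \overline{M} = 0$, the quadric is a cone whose unique ruling is automatically $\FF_q$-rational, cutting out an $\FF_q$-rational $g^1_3$, so $\gamma = 3$; if $\det \overline{M} \neq 0$ the quadric is smooth and becomes isomorphic to $\PPq^1 \times \PPq^1$ over $\FF_q(\sqrt{\det \overline{M}})$. When $\chi_2(\det \overline{M}) = 1$ this base change is trivial and both rulings are already $\FF_q$-rational, giving again $\gamma = 3$. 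When $\chi_2(\det \overline{M}) = -1$, Frobenius must swap the two rulings: were it to preserve them, each ruling would define an $\FF_q$-form of $\PPq^1$ parametrizing lines on $\overline{S}_2$, and since any such form has an $\FF_q$-point this would produce an $\FF_q$-rational line on a non-split quadric, a contradiction (the underlying quadratic form has Witt index $1$). Because every $g^1_3$ on a non-hyperelliptic canonical genus-$4$ curve is cut out by a ruling of the (unique) quadric through the canonical model, no $\FF_q$-rational $g^1_3$ then exists, and $\gamma \geq 4$.

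To decide between $\gamma = 4$ and $\gamma = 5$ in the non-split case, I would use that on a non-hyperelliptic genus-$4$ curve every $g^1_4$ is complete of the form $|K - E|$ for some effective degree-$2$ divisor class $E$, and conversely. Non-hyperellipticity forces $h^0(E) = 1$ for each such $E$, so $\FF_q$-rationality of the class forces $E$ itself to be $\FF_q$-rational. If $\overline{C}(\FF_{q^2}) \neq \emptyset$, pick $P \in \overline{C}(\FF_{q^2})$ and put $E = 2P$ or $E = P + \sigma(P)$ according to whether $P$ is Frobenius-fixed; then $|K - E|$ is a $g^1_4$, with no base points since a base point would trim it down to an $\FF_q$-rational $g^1_3$, contradicting the preceding paragraph---so $\gamma = 4$. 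If $\overline{C}(\FF_{q^2}) = \emptyset$, then $\overline{C}$ has no closed point of degree $\leq 2$ over $\FF_q$, hence no effective $\FF_q$-rational divisor of degree $2$ and thus no $g^1_4$, giving $\gamma \geq 5$; combined with the Tsfasman bound $\gamma \leq g + 1 = 5$ recalled earlier, this yields $\gamma = 5$.

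The guarantee that $\overline{C}(\FF_{q^2}) \neq \emptyset$ for $q > 7$ follows from the Hasse--Weil bound applied over $\FF_{q^2}$: one has $\#\overline{C}(\FF_{q^2}) \geq q^2 + 1 - 8q$, which is strictly positive for $q \geq 8$. The main technical hurdle I anticipate is justifying the $g^1_3$/ruling correspondence in odd positive characteristic, where the standard references (e.g.\ \cite[Ex.\,IV.5.2]{hartshorne}) are phrased over $\CC$; the classical argument---that a complete $g^1_3$ with bundle $L$ maps $\overline{C}$ to a $(3,3)$ divisor on $\PPq(H^0(L)) \times \PPq(H^0(K - L))$ whose Segre image is the (unique) quadric through the canonical model---goes through verbatim, completing the proof.
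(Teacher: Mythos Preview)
Your argument is correct and follows the same route as the paper: identify the $g^1_3$'s with rulings of the canonical quadric, then in the non-split case use the Riemann--Roch correspondence between $g^1_4$'s and effective degree-$2$ divisors (which the paper phrases geometrically as the pencil of planes through a secant or tangent line, i.e.\ exactly $|K-E|$). The only deviations are that you invoke the Tsfasman bound $\gamma \leq g+1 = 5$ where the paper instead exhibits an explicit $g^1_5$ from a degree-$5$ place over $\FF_{q^5}$, and you use the raw Hasse--Weil estimate $q^2+1-8q>0$ for the $q>7$ guarantee where the paper cites Howe--Lauter--Top; both substitutions are perfectly adequate.
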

\begin{proof}
By \cite[Ex.\,IV.5.5.2]{hartshorne} our curve carries one or two geometric $g^1_3$'s, depending on whether
the quadric $\overline{S}_2$ is singular (discriminant $0$) or not. In the former case the quadric is a cone, and the $g^1_3$ corresponds
to projection from the top. This is automatically defined over $\FF_q$. In the latter case the quadric is $\FF_{q^2}$-isomorphic
to the hyperboloid $\PPq^1 \times \PPq^1 \subset \PPq^3$
and the $g^1_3$'s correspond to the two rulings of the latter. If the isomorphism can be defined over $\FF_q$ (discriminant $1$)
then the $g^1_3$'s
are $\FF_q$-rational. In the other case (discriminant $-1$) the smallest
field of definition is $\FF_{q^2}$.
So we can assume that the discriminant of $\overline{C}$ is $-1$, and therefore that $\gamma > 3$. Now suppose that $\# \overline{C}(\FF_{q^2}) > 0$, which is guaranteed if $q > 7$ by \cite[Thm.\,2]{howe}.
If there is an $\FF_q$-point then let $\overline{\ell}$ be the tangent line to $\overline{C}$ at it. In the other case 
we can find two conjugate $\FF_{q^2}$-points, and we let $\overline{\ell}$ be the line connecting both. In both cases $\overline{\ell}$ is defined
over $\FF_q$, and the pencil of planes
through $\overline{\ell}$ cuts out a $g^1_4$, as wanted. The argument can be reversed: if there exists a $g^1_4$ containing
an effective $\FF_q$-rational divisor $D$, then by Riemann-Roch 
we find that $|K - D|$ is non-empty. In particular there exists an effective $\FF_q$-rational divisor of degree $\deg (K-D) = 2$ on $\overline{C}$, and 
$\# \overline{C}(\FF_{q^2}) > 0$. So if $\# \overline{C}(\FF_{q^2}) = 0$ then $\gamma > 4$. Now 
note that $\# \overline{C}(\FF_{q^5}) > 0$ by the Weil bound. So $\overline{C}$ carries an effective divisor
$D$ of degree $5$. The linear system $|K-D|$ must be empty, for otherwise there would exist an $\FF_q$-point on $\overline{C}$.
But then Riemann-Roch implies that $\dim |D| = 1$, i.e.\ our curve carries an $\FF_q$-rational $g^1_5$.
\end{proof}

\begin{remark}
An example of a genus four curve $\overline{C}/\FF_3$ having
$\FF_3$-gonality five can be found in an \texttt{arXiv} version of our paper (\texttt{1605.02162v2}).
\end{remark}

%\begin{remark}
%The reader can verify that
%\begin{eqnarray*}
% \overline{S}_2 & =  & 2X^2 + XY + 2XZ + 2Y^2 + 2YW + 2ZW + 2W^2 \\
% \overline{S}_3 & = & X^3 + X^2Y + 2X^2W + XY^2 + XYZ + 2XZ^2 + XZW + 2Y^2W + YZ^2 + 2YZW \\
%                &   & + Z^3 + Z^2W + 2ZW^2
%\end{eqnarray*}
%cut out a genus four curve $\overline{C} / \FF_3$ having discriminant $-1$ and satisfying $\# \overline{C}(\FF_9) = \emptyset$,
%so genus four curves of $\FF_q$-gonality five do exist.
%\end{remark}

%The claim that $\# \overline{C}(\FF_{q^5}) > 0$ by the Weil bound relies on our overall assumption that $q$ is odd. Indeed, for $q = 2$ the 
%lower Weil bound is negative. However, an exhaustive computation shows that there are no genus $4$ curves over $\FF_2$ that canonically embed 
%on a singular quadric and for which $\# \overline{C}(\FF_{q^5}) = 0$. In particular (using an appropriate notion of discriminant) Lemma~
%\ref{genus4gonality}
%remains valid in even characteristic.

To address Problem~\ref{liftingproblem} in the non-hyperelliptic genus $4$ case we make a case-by-case analysis.
 
 \paragraph*{\underline{$\chi_2(\det \overline{M}_2) = 0$}}
 
 In this case $\overline{S}_2$ is a cone over a conic. A linear change of variables takes 
 $\overline{S}_2$ to the form $ZW - X^2$, which we note is one of the standard realizations
 inside $\PPq^3$ of the weighted projective plane $\PPq(1,2,1)$. 
 It is classical how to find such a linear change of variables (diagonalization, essentially).
 Projecting from $(0:0:0:1)$ on the $XYZ$-plane amounts to eliminating the variable $W$, to obtain
 \begin{equation} \label{genus4conic}
 Z^3 \overline{S}_3(X, Y , Z, \frac{X^2}{Z}) = \overline{S}_3(XZ,YZ,Z^2,X^2).
 \end{equation}
 After dehomogenizing with respect to $Z$, renaming $X \leftarrow x$ and $Y \leftarrow y$ and rescaling if needed, we obtain an affine equation
$\overline{f} = y^3 + \overline{f}_2(x)y^2 + \overline{f}_4(x)y + \overline{f}_6(x)$,
 with $\overline{f}_i \in \FF_q[x]$ of degree at most $i$. Its Newton polygon
  is contained in (and typically equals):
\begin{center}
  \polfig{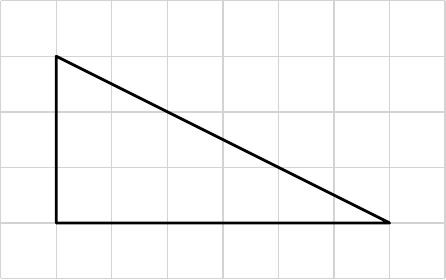}{3.2}{2}{$\Delta_{4,0}^0$}
\end{center}
  So Baker's bound is attained and we take for $f \in \mathcal{O}_K[x,y]$ a naive coefficient-wise lift.

 \paragraph*{\underline{$\chi_2(\det \overline{M}_2) = 1$}}
 
  In this case $\overline{S}_2$ is a hyperboloid. A linear change of variables takes $\overline{S}_2$ 
  to the standard form $XY - ZW$, which we note is the image of $\PPq^1 \times \PPq^1$ in $\PPq^3$ under the Segre embedding.
  Projection from $(0:0:0:1)$ on the $XYZ$-plane amounts to eliminating the variable $W$, to obtain
  \[  Z^3 \overline{S}_3(X,Y,Z,\frac{XY}{Z} ) = \overline{S}_3(XZ, YZ, Z^2, XY).\]
  After dehomogenizing with respect to $Z$ and renaming $X \leftarrow x$ and $Y \leftarrow y$ we obtain an affine equation
  $\overline{f} = \overline{f}_0(x) y^3 + \overline{f}_1(x) y^2 + \overline{f}_2(x) y + \overline{f}_3(x)$
  with all $\overline{f}_i \in \FF_q[x]$ of degree at most $3$. Its Newton polygon is contained in (and typically equals)
\begin{center}
  \polfig{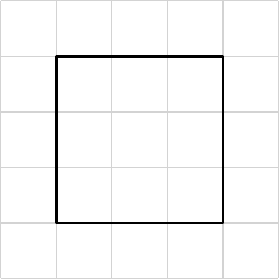}{2}{2}{$\Delta_{4,1}^{0}$}
\end{center}
So Baker's bound is attained and we can take for $f \in \mathcal{O}_K[x,y]$ a coefficient-wise lift of $\overline{f}$.

\paragraph*{\underline{$\chi_2(\det \overline{M}_2) = -1$}}

This is our first case where in general no plane model can be found for which Baker's bound is attained \cite[\S6]{CV}. 
If $\overline{C}(\FF_{q^2}) = \emptyset$, or in other words if $\gamma = 5$, then unfortunately we do not know how to
address Problem~\ref{liftingproblem}. 
We therefore assume that $\overline{C}(\FF_{q^2}) \neq \emptyset$ and hence that $\gamma = 4$.
This is guaranteed if $q > 7$, so for point counting purposes this is amply sufficient.
We follow the proof of Lemma~\ref{genus4gonality}: by exhaustive search we find a point $P \in \overline{C}(\FF_{q^2})$
along with its Galois conjugate $P'$ and consider the line $\overline{\ell}$ connecting both (tangent line if $P = P'$). This line is defined over $\FF_q$, so that modulo a projective transformation we can assume that $\overline{\ell} : X = Z = 0$.

When plugging in $X = Z = 0$ in $\overline{S}_2$ we find a non-zero quadratic expression in $Y$ and $W$. Indeed:
$\overline{S}_2$ cannot vanish identically on $\overline{\ell}$ because no three points of $\overline{S}_2(\FF_q)$ are collinear. 
Because $\overline{C}$ intersects $\overline{\ell}$ in two points (counting multiplicities) we find that
\[ \overline{S}_3(0,Y,0,W) = (\overline{a}Y + \overline{b}W) \overline{S}_2(0,Y,0,W) \]
for certain $\overline{a}, \overline{b} \in \FF_q$ that are possibly zero. Lift $\overline{S}_2$ coefficient-wise to
a homogenous quadric $S_2 \in \mathcal{O}_K[X,Y,Z,W]$ and let $a,b \in \mathcal{O}_K$ reduce to $\overline{a},\overline{b}$ mod $p$.
We now construct $S_3 \in \mathcal{O}_K[X,Y,Z,W]$ as follows: for the coefficients at $Y^3, Y^2W, YW^2, W^3$ we make the unique choice for which
\[ S_3(0,Y,0,W) = (aY + bW) S_2(0,Y,0,W), \]
while the other coefficients are randomly chosen lifts of the corresponding coefficients of $\overline{S}_3$.
Then the genus $4$ curve $C \subset \PPK^3$ defined by $S_2$ and $S_3$ is of gonality $4$. Indeed, it is constructed such that the line $\ell : X = Z = 0$ intersects the curve in two points (possibly over a quadratic extension), and the pencil of planes through this line
cuts out a $g^1_4$.

Now we project our lift $C \subset \PPK^3$ from $(0:0:0:1)$ to a curve in $\PPK^2$. This amounts to eliminating $W$ from $S_2$ and $S_3$.
By dehomogenizing the resulting sextic with respect to $Z$, and by 
renaming $X \leftarrow x$ and $Y \leftarrow y$ we end up with a polynomial $f \in \mathcal{O}_K[x,y]$ whose Newton polygon
is contained in (and typically equals):
\begin{center}
  \polfig{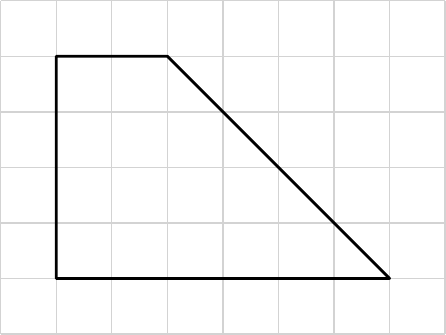}{3.2}{2.4}{$\Delta_{4,-1}^{6}$}
\end{center}
Geometrically, what happens is that the points of $C$ on $\ell$ are both mapped to $(0:1:0)$ under projection from $(0:0:0:1)$, creating a singularity there, which in terms of the Newton polygon results in $6\Sigma$ with its top chopped off.
The polynomial $f$ satisfies (i), (ii) and (iii) from Problem~\ref{liftingproblem}. Note that Baker's bound is usually \emph{not} 
attained here: it gives an upper bound of $9$, while $C$ has genus $4$. 
So it is crucial to lift the equations to $\mathcal{O}_K$ \emph{before} projecting on the plane.

\noindent \varhrulefill[0.4mm]
\vspace{-0.3cm}

\begin{algo} \label{algorithm_genus4}
Lifting curves of genus $4$: basic solution 

\vspace{-0.2cm}
\noindent \varhrulefill[0.4mm]

\noindent \textbf{Input:} non-hyperelliptic genus $4$ curve $\overline{C}/\FF_q$ of $\FF_q$-gonality $\gamma \leq 4$

\noindent \textbf{Output:} lift $f \in \mathcal{O}_K[x,y]$ satisfying (i), (ii), (iii) that is supported

\noindent \qquad \qquad \qquad $\bullet$ on $\Delta_{4,0}^0$ if the discriminant is $0$, or else

\noindent \qquad \qquad \qquad $\bullet$ on $\Delta_{4,1}^0$ if the discriminant is $1$, or else

\noindent \qquad \qquad \qquad $\bullet$ on $\Delta_{4,-1}^6$ %if the discriminant is $-1$

\vspace{-0.2cm}
\noindent \varhrulefill[0.4mm]

\noindent \small \phantom{0}1 \normalsize: $\overline{C} \gets \text{CanonicalImage}(\overline{C})$ in $\PPq^3 = \proj \FF_q[X,Y,Z,W]$

\noindent \small \phantom{0}2 \normalsize: $\overline{S}_2 \gets \text{unique quadric in Ideal}(\overline{C})$; $\overline{M}_2 \gets \text{Matrix}(\overline{S}_2)$; $\chi \gets \chi_2(\det \overline{M}_2)$

\noindent \small \phantom{0}3 \normalsize: $\overline{S}_3 \gets \text{cubic that along with } \overline{S}_2 \text{ generates Ideal}(\overline{C})$

\noindent \small \phantom{0}4 \normalsize: \textbf{if} $\chi = 0$ \textbf{then} 

\noindent \small \phantom{0}5 \normalsize: \quad apply automorphism of $\PPq^3$ transforming $\overline{S}_2=0$ into $ZW - X^2=0$

\noindent \small \phantom{0}6 \normalsize: \quad \textbf{return} NaiveLift(Dehomogenization${}_Z$($\overline{S}_3(XZ,YZ,Z^2,X^2)$))

\noindent \small \phantom{0}7 \normalsize: \textbf{else if} $\chi = 1$ \textbf{then}

\noindent \small \phantom{0}8 \normalsize: \quad apply automorphism of $\PPq^3$ transforming $\overline{S}_2 = 0$ into $XY-ZW = 0$

\noindent \small \phantom{0}9 \normalsize: \quad \textbf{return} NaiveLift(Dehomogenization${}_Z$($\overline{S}_3(XZ,YZ,Z^2,XY)$))

\noindent \small 10 \normalsize: \textbf{else}

\noindent \small 11 \normalsize: \quad $P := \text{Random}(\overline{C}(\FF_{q^2}))$; $P' := \text{Conjugate}(P)$

\noindent \small 12 \normalsize: \quad $\overline{\ell} \gets \text{line through $P$ and $P'$ (tangent line if $P = P'$)}$

\noindent \small 13 \normalsize: \quad  apply automorphism of $\PPq^3$ transforming $\overline{\ell}$ into $X = Z = 0$

\noindent \small 14 \normalsize: \quad  $S_2 \leftarrow \text{NaiveLift}(\overline{S}_2)$

\noindent \small 15 \normalsize: \quad  $S_3 \leftarrow$ lift of $\overline{S}_3$ satisfying $S_3(0,Y,0,W) = (aY + bW)S_2(0,Y,0,W)$ for $a,b \in \mathcal{O}_K$

\noindent \small 16 \normalsize: \quad  \textbf{return} Dehomogenization${}_Z$(res${}_W$($S_2,S_3$))

\vspace{-0.2cm}
\noindent \varhrulefill[0.4mm]
\end{algo}

\subsubsection{Optimizations} \label{optim_genus4}

 \paragraph*{\underline{$\chi_2(\det \overline{M}_2) = 0$}}

By applying (\ref{mademonic}) to 
a polynomial with Newton polygon $\Delta_{4,0}^{0}$
one ends up with a polynomial that is monic in $y$ and that has degree $6$ in $x$. This can be improved as soon as $\overline{C}(\FF_q)\neq 0$, which is guaranteed if $q > 49$ by \cite[Thm.\,2]{howe}. Namely
we can view (\ref{genus4conic}) as the defining equation of a smooth curve in the weighted
  projective plane $\PPq(1, 2, 1)$.
  Using an automorphism of the latter we can position
  a given $\FF_q$-rational point $P$ at $(1:0:0)$ and the corresponding tangent line at $X = 0$, in order to end up with a Newton polygon that is contained in (and typically equals): 
  \begin{center}
  \polfig{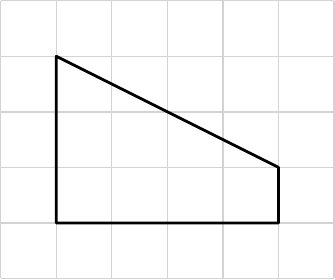}{2.4}{2}{$\Delta_{4,0}^{1}$}
\end{center}
  See Remark~\ref{autsofP121} below for how to do this in practice.
  So we find $\deg_x f = 4$, which is optimal because the $g^1_3$ is unique in the case of a singular $\overline{S}_2$. 
  There is a caveat here, in that the tangent line at $P$ might exceptionally be vertical, i.e.\ $P$ might be a ramification point
  of our degree $3$ map $(x,y) \mapsto x$. In this case it is impossible to position this line at $X = 0$, but 
  in practice one can simply retry with another $P$.
  But in fact having a vertical tangent line is an even slightly better situation, as explained in Remark~\ref{genus4chi0remark} below.

   \begin{remark}~\label{autsofP121}
   The automorphisms of $\PPq(1,2,1)$ can be applied directly to $\overline{f}$. They correspond to
   \begin{itemize}
     \item substituting 
   $y \leftarrow \overline{a} y + \overline{b} x^2 + \overline{c} x + \overline{d}$ and $x \leftarrow \overline{a}' x + \overline{b}'$ in $
  \overline{f}$ for some 
  $\overline{a},\overline{a}' \in \FF_q^\ast$ and $\overline{b},\overline{b}',\overline{c},\overline{d} \in \FF_q$,
    \item exchanging the line at infinity for the $y$-axis by replacing $\overline{f}$ by $x^6 \overline{f}(x^{-1},x^{-2}y)$, 
  \end{itemize} or to
  a composition of both. For instance imagine that an affine point $P = (\overline{a},\overline{b})$ was found with a non-vertical
  tangent line. Then $\overline{f} \leftarrow \overline{f}(x + \overline{a}, y + \overline{b})$ translates this point to the origin,
  at which the tangent line becomes of the form $y = \overline{c} x$. Substituting $\overline{f} \leftarrow \overline{f}(x,y + \overline{c}x)$ positions this line horizontally, and finally replacing $\overline{f}$ by $x^6 \overline{f}(x^{-1},x^{-2}y)$ results
  in a polynomial with Newton polygon contained in $\Delta_{4,0}^1$.
  \end{remark}

  \begin{remark}[non-generic optimizations] \label{genus4chi0remark}
  If $P$ has a vertical tangent line then positioning it at $(1:0:0)$
  results in a Newton polygon that is contained in (and typically equals) the first polygon below:
    \begin{center}
  \polfig{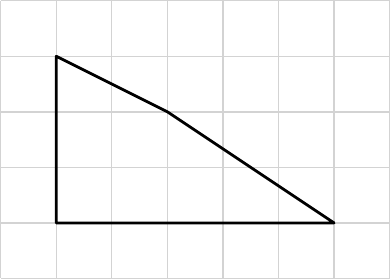}{2.8}{2}{$\Delta_{4,0}^{2}$}
  \polfig{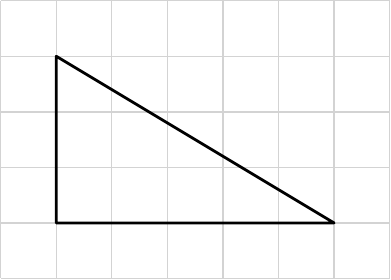}{2.8}{2}{$\Delta_{4,0}^{3}$}
\end{center}
  Even though $\deg_x f = 5$ here, this results in a slightly faster point count.
  Such a $P$ will exist if and only if the ramification scheme of $(x,y) \mapsto x$ has an $\FF_q$-rational point. 
  Following the same heuristic as in Remark~\ref{genus3flexremark} we expect that this works in about $1 - 1/e$ of the cases.
  If there exists a point of ramification index $3$ then one can even end up inside the second polygon. This event is highly exceptional,
  but we include it in our discussion because this corresponds to the well-known class
  of $C_{3,5}$ curves.
  \end{remark}

 \paragraph*{\underline{$\chi_2(\det \overline{M}_2) = 1$}}

By applying (\ref{mademonic}) to a polynomial with Newton polygon $\Delta_{4,1}^{0}$ one ends up with a polynomial
that is monic in $y$ and that has degree $3 + (\gamma - 1)3 = 9$ in $x$. This
can be improved as soon as $\overline{C}(\FF_q) \neq 0$, which is guaranteed if $q > 49$ by \cite[Thm.\,2]{howe}.
Assume as before that $\overline{S}_2$ is in the standard form $XY - ZW$. So it is the image of the Segre embedding
\begin{equation}  \label{segre}
  \PPq^1 \times \PPq^1 \hookrightarrow \PPq^3 : ( (X_0 : Z_0), (Y_0 : W_0) ) \mapsto (X_0W_0 : Y_0Z_0 : Z_0W_0 : X_0Y_0 ).
\end{equation}
That is: we can view $\overline{C}$ as the curve in $\mathbb{P}^1 \times \mathbb{P}^1$
defined by the bihomogeneous polynomial
\[ \overline{S}_3(X_0W_0,Y_0Z_0,Z_0W_0,X_0Y_0) \]
of bidegree $(3,3)$. Remark that if we dehomogenize with respect to both $Z_0$ and $W_0$ and rename
$X_0 \leftarrow x$ and $Y_0 \leftarrow y$ then we get the polynomial $\overline{f}$ from before.
Now if our curve has a rational point $P$, by applying an appropriate projective transformation 
in each component we can arrange that $P = ((1:0), (1:0))$. If we then dehomogenize we end up with a Newton polygon that is contained in (and typically equals):
\begin{center}
  \polfig{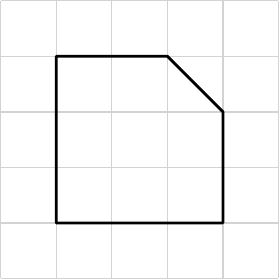}{2}{2}{$\Delta_{4,1}^{1}$}
\end{center}
So Baker's bound is attained and we take for $f \in \mathcal{O}_K[x,y]$ a naive coefficient-wise lift. Now applying (\ref{mademonic})
typically results in a polynomial of degree $3 + (\gamma - 1)2 = 7$ in $x$.

  \begin{remark}~\label{autsofP1P1}
   The automorphisms of $\PPq^1 \times \PPq^1$ can again be applied directly to $\overline{f}$. They correspond to
   \begin{itemize}
     \item substituting 
   $y \leftarrow \overline{a} y + \overline{b}$ and $x \leftarrow \overline{a}' x + \overline{b}'$ in $
  \overline{f}$ for some 
  $\overline{a},\overline{a}' \in \FF_q^\ast$ and $\overline{b},\overline{b}' \in \FF_q$,
    \item exchanging the $x$-axis for the horizontal line at infinity by replacing $\overline{f}$ by $y^3 \overline{f}(x,y^{-1})$,
    \item exchanging the $y$-axis for the vertical line at infinity by replacing $\overline{f}$ by $x^3 \overline{f}(x^{-1},y)$, 
  \end{itemize} or to
  a composition of these. For instance imagine that an affine point $P = (\overline{a}, \overline{b})$ was found, then
  $\overline{f} \leftarrow \overline{f}(x + \overline{a}, y + \overline{b})$ translates this point to the origin, and
  subsequently replacing $\overline{f}$ by $x^3y^3\overline{f}(x^{-1}, y^{-1})$ results in a polynomial with Newton polygon contained
  in $\Delta_{4,1}^1$.
  \end{remark}

\begin{remark}[non-generic optimizations]
If one manages to let $P$ be a point with a horizontal tangent line, i.e.\ if $P$ is a ramification point
of the projection map from $\overline{C}$ onto the second component of $\PPq^1 \times \PPq^1$, then the Newton polygon even becomes contained
in (and typically equals):
\begin{center}
  \polfig{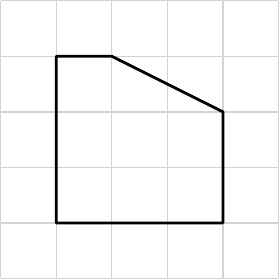}{2}{2}{$\Delta_{4,1}^{2}$}
\end{center}
This eventually results in a polynomial $f \in \mathcal{O}_K[x,y]$ of degree $3 + (\gamma - 1)1 = 5$ in $x$. As in the discriminant $0$ case,
we heuristically expect the probability of success to be about $1 - 1/e$.
However, it is also fine to find a ramification point of the projection of $\overline{C}$ onto the first component of $\PPq^1 \times \PPq^1$,
because we can change the role of
$(X_0,Z_0)$ and $(Y_0,W_0)$ if wanted. 
%By symmetry we again expect the probability of success to be about $1 - 1/e$. 
Assuming independence of events, the percentage of non-hyperelliptic genus $4$ curves with discriminant $1$ that admit a Newton polygon of the form $\Delta_{4,1}^2$ should be approximately $1 - 1/e^2$.
\end{remark}

\begin{comment}
\begin{remark} One could also try to let $P$ be a ramification point of index three. Such a point will usually not exist, but if
it does then it can be found easily, and one obtains a Newton polygon that is contained in (and typically equals)
\begin{center}
  \polfig{}{2}{2}{$\Delta_{4,1}^{3}$}
\end{center}
In particular $\overline{f} \in \FF_q[x,y]$ and its lift $f \in \mathcal{O}_K[x,y]$ 
are already monic in $y$, and so the resulting degree in $x$ is $3$.
\end{remark}
\end{comment}

 \paragraph*{\underline{$\chi_2(\det \overline{M}_2) = -1$}}

By applying (\ref{mademonic}) to a polynomial with Newton polygon $\Delta_{4,-1}^{6}$ we end up with a polynomial 
that is monic in $y$ and that has degree $3 + (\gamma - 1)2 = 9$. This
can be improved as soon as $\overline{C}(\FF_q) \neq 0$, which is guaranteed if $q > 49$ by \cite[Thm.\,2]{howe}.
In this case we redo the construction with $\overline{\ell}$ the tangent line to a point $P \in \overline{C}(\FF_q)$. As before we apply a projective transformation to
obtain $\overline{\ell} : X = Z = 0$, but in addition we make sure that $P = (0:0:0:1)$.
This implies that $\overline{S}_2(0,Y,0,W) = Y^2$, possibly after multiplication by a scalar. We now proceed as before, to find
lifts $S_2, S_3 \in \mathcal{O}_K[X,Y,Z,W]$ that cut out a genus $4$ curve $C \subset \PPK^3$, still
satisfying the property of containing $(0:0:0:1)$ with corresponding tangent line $\ell : X = Z = 0$. If we then
project from $(0:0:0:1)$ we end up with a quintic in $\PPK^2$, rather than a sextic. The quintic still passes through the point $(0:1:0)$, which is now non-singular: otherwise the pencil of lines through that point would cut out a $K$-rational $g^1_3$. We can therefore apply a projective transformation over $K$ that maps the corresponding tangent line to infinity, while keeping the point at $(0:1:0)$. After having done so, we dehomogenize to find a polynomial $f \in \mathcal{O}_K[x,y]$ whose Newton polygon is contained in (and typically equals)
\begin{center}
  \polfig{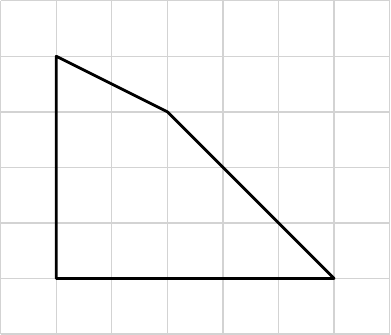}{2.8}{2.4}{$\Delta_{4,-1}^{5}$}
\end{center}
It still satisfies (i), (ii) and (iii), while here $\deg_xf \leq 5$.

\subsubsection{Implementation}

The tables below contain timings, memory usage and failure rates for $\chi_2=0,1,-1$ and various values of $p$ and $q=p^n$. 
For the precise meaning of the various entries in the table see Section~\ref{section_genus3implementationandtimings}.\\

%\paragraph{$\mathbf{\chi_2=0}$\\}

\vspace{-0.2cm}

\noindent \textbf{$\mathbf{\chi_2=0}$}\\

\scriptsize
\tabcolsep=0.11cm
\noindent \begin{tabular}{r||r|r|r|r}
             & time     &time    & space  & fails   \\
$p$          & lift(s)  &pcc(s)  & (Mb)   & /1000   \\
\hline \hline
$11$         & $0.01$   & $0.3$  & $32$   & $159$   \\
$67$         & $0.01$   & $1.4$  & $32$   & $2$     \\
$521$        & $0.01$   & $13$   & $73$   & $2$     \\
$4099$       & $0.01$   & $189$  & $323$  & $0$     \\
$32771$      & $0.01$   &$2848$  & $2396$ & $0$    
\end{tabular}
\quad
\begin{tabular}{r||r|r|r|r}
             & time     &time    & space  & fails  \\
$q$          & lift(s)  &pcc(s)  & (Mb)   & /1000  \\
\hline \hline
$3^5$        &  $0.04$  &$6.6$   & $64$   & $2$    \\
$7^5$        &  $0.05$  & $13$   & $73$   & $0$    \\
$17^5$       &  $0.1$   & $32$   &$118$   & $0$    \\
$37^5$       &  $0.1$   & $73$   &$197$   & $0$    \\
$79^5$       &  $0.1$   &$183$   &$371$   & $0$   
\end{tabular}
\quad
\begin{tabular}{r||r|r|r|r}
             & time   &time     & space  & fails  \\
$q$          & lift(s)&pcc(s)   & (Mb)   & /1000  \\
\hline \hline
$3^{10}$     & $0.3$  & $34$    &$112$   & $0$   \\
$7^{10}$     & $0.4$  & $76$    &$156$   & $0$   \\
$17^{10}$    & $0.6$  &$205$    &$320$   & $0$   \\
$37^{10}$    & $0.7$  &$537$    &$653$   & $0$   \\
$79^{10}$    & $0.9$  &$1392$   &$1410$  & $0$  
\end{tabular}\\

\normalsize

%\paragraph{$\mathbf{\chi_2=1}$ \\}

\vspace{0.3cm}

\noindent \textbf{$\mathbf{\chi_2=1}$}\\

\noindent \scriptsize
\tabcolsep=0.11cm
\begin{tabular}{r||r|r|r|r}
             & time     &time    & space  & fails   \\
$p$          & lift(s)  &pcc(s)  & (Mb)   & /1000   \\
\hline \hline
$11$         & $0.01$   & $0.4$  & $32$   & $169$   \\
$67$         & $0.02$   & $1.8$  & $32$   & $1$     \\
$521$        & $0.02$   & $14$   & $76$   & $0$     \\
$4099$       & $0.02$   & $230$  & $508$  & $0$     \\
$32771$      & $0.02$   &$2614$  &$3616$  & $0$    
\end{tabular}
\quad
\begin{tabular}{r||r|r|r|r}
             & time     &time    & space  & fails  \\
$q$          & lift(s)  &pcc(s)  & (Mb)   & /1000  \\
\hline \hline
$3^5$        & $0.1$    & $7.5$  & $64$   & $0$    \\
$7^5$        & $0.1$    & $16$   & $112$  & $0$    \\
$17^5$       & $0.2$    & $41$   & $197$  & $0$    \\
$37^5$       & $0.2$    & $94$   & $320$  & $0$    \\
$79^5$       & $0.2$    &$241$   & $589$  & $0$   
\end{tabular}
\quad
\begin{tabular}{r||r|r|r|r}
             & time   &time     & space  & fails  \\
$q$          & lift(s)&pcc(s)   & (Mb)   & /1000  \\
\hline \hline
$3^{10}$     & $0.7$  & $41$    & $150$   & $0$   \\
$7^{10}$     & $1.2$  & $102$   & $320$   & $0$   \\
$17^{10}$    & $2.1$  & $276$   & $556$   & $0$   \\
$37^{10}$    & $2.8$  & $736$   & $1070$  & $0$   \\
$79^{10}$    & $3.9$  & $1904$  & $2016$  & $0$  
\end{tabular}\\

\normalsize

%\paragraph{$\mathbf{\chi_2=-1}$ \\}

\vspace{0.3cm}

\noindent \textbf{$\mathbf{\chi_2=-1}$}\\

\noindent \scriptsize
\tabcolsep=0.11cm
\begin{tabular}{r||r|r|r|r}
             & time     &time    & space  & fails   \\
$p$          & lift(s)  &pcc(s)  & (Mb)   & /1000   \\
\hline \hline
$11$         & $0.06$   & $2.4$  &  $73$  & $0$     \\
$67$         & $0.02$   & $4.3$  &  $73$  & $0$     \\
$521$        & $0.02$   & $32$   & $124$  & $0$     \\
$4099$       & $0.03$   &$503$   & $815$  & $0$     \\
$32771$      & $0.02$   &$5958$  & $6064$ & $0$    
\end{tabular}
\quad
\begin{tabular}{r||r|r|r|r}
             & time     &time    & space  & fails  \\
$q$          & lift(s)  &pcc(s)  & (Mb)   & /1000  \\
\hline \hline
$3^5$        & $0.15$   & $20$   & $76$   & $0$    \\
$7^5$        & $0.3$    & $46$   &$156$   & $0$    \\
$17^5$       & $0.4$    &$108$   &$241$   & $0$    \\
$37^5$       & $0.6$    &$243$   &$403$   & $0$    \\
$79^5$       & $0.8$    &$570$   &$749$   & $0$   
\end{tabular}
\quad
\begin{tabular}{r||r|r|r|r}
             & time    &time     & space  & fails  \\
$q$          & lift(s) &pcc(s)   & (Mb)   & /1000  \\
\hline \hline
$3^{10}$     & $1.3$   & $130$   & $273$  & $0$   \\
$7^{10}$     & $2.8$   & $312$   & $416$  & $0$   \\
$17^{10}$    & $5.0$   & $815$   & $813$  & $0$   \\
$37^{10}$    & $6.5$   &$1939$   & $1463$ & $0$   \\ 
$79^{10}$    & $8.4$   &$4609$   & $2942$ & $0$   \\
\end{tabular}
\normalsize

\bigskip

\par Contrary to the genus~$3$ case, we see that for very small $p$ or $q=p^n$, sometimes we do not find a lift satisfying
\cite[Ass.\,1]{tuitman2}. However, in these cases we can usually compute the zeta function by counting points naively, 
so not much is lost here in practice. Note that the point counting is considerably slower for $\chi_2=-1$ than for 
$\chi_2=0,1$ which is due to the map from the curve to $\PPq^1$ having degree $4$ instead of $3$ in this case.

\subsection{Curves of genus five} \label{section_genus5}

\subsubsection{Lifting curves of genus five} \label{section_genus5lifting}

By Petri's theorem \cite{saintdonat} a minimal set of generators for the ideal of a canonical model 
\[ \overline{C} \subset \PPq^4 = \text{Proj} \, \FF_q[X,Y,Z,W,V] \]
of a non-hyperelliptic genus $5$ curve consists of
\begin{itemize}
  \item three quadrics $\overline{S}_{2,1}, \overline{S}_{2,2}, \overline{S}_{2,3}$ and two cubics $\overline{S}_{3,1}, \overline{S}_{3,2}$ in the trigonal case,
  \item just three quadrics $\overline{S}_{2,1}, \overline{S}_{2,2}, \overline{S}_{2,3}$ in the non-trigonal case. 
\end{itemize}
So given such a minimal set of generators, it is straightforward to decide trigonality. We denote the space of quadrics in the ideal
of $\overline{C}$ by $\mathcal{I}_2(\overline{C})$. Then in both settings $\mathcal{I}_2(\overline{C})$ is a three-dimensional $\FF_q$-vector space
of which $\overline{S}_{2,1}, \overline{S}_{2,2}, \overline{S}_{2,3}$ form a basis.

\paragraph*{Trigonal case} \hfill Here Petri's theorem moreover tells us that $\mathcal{I}_2(\overline{C})$ cuts out a
smooth ir-

\noindent \begin{minipage}[b]{9.5cm}
reducible surface $\overline{S}$ that is a
rational normal surface scroll of type $(1,2)$. This means that up to a linear change of variables,  
it is the image $\overline{S}(1,2)$ of
\[ \PPq^1 \times \PPq^1 \hookrightarrow \PPq^4 : ((s:t),(u:v)) \mapsto (vst:ut:vt^2:us:vs^2),\]
i.e.\ \hfill it is the ruled surface obtained by simultaneously pa-
\end{minipage}
\ \
\begin{minipage}[b]{4.1cm}
\begin{center}
  \includegraphics[width=5cm]{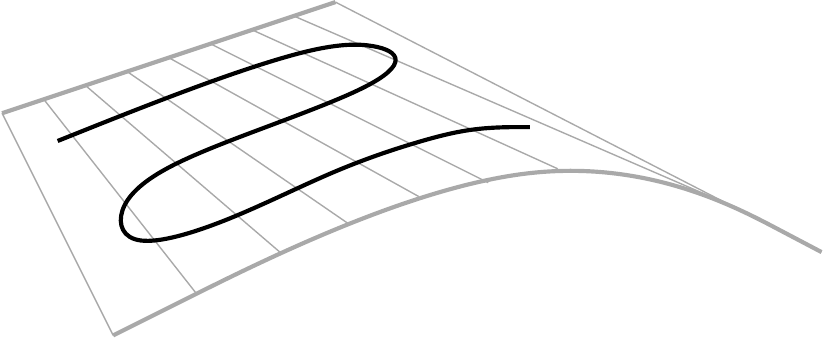}\\
  \vspace{0.4cm}
\end{center}
\end{minipage}
%\vspace{0.3cm}

\noindent rameterizing a line in the $YW$-plane (called the directrix)
and a conic in the $XZV$-plane, each time drawing the rule through the points under consideration (each of these rules
intersects our trigonal curve in three points, counting multiplicities).
In other words, modulo a linear change of variables
the space $\mathcal{I}_2(\overline{C})$ admits the basis
\begin{equation} \label{defpols_scroll}
 X^2 - ZV, \qquad XY - ZW, \qquad XW - YV.
\end{equation}
Note that these are (up to sign) the $2 \times 2$ minors of
\[
  \left( \begin{array}{cc} X & V \\ Z & X \\ \end{array} \right| \hspace{-0.1cm}
  \left. \begin{array}{c} W \\ Y \\ \end{array} \right).
\]  
It is not trivial to \emph{find} such a linear change of variables. 
A general method using Lie algebras
for rewriting Severi-Brauer surfaces in standard form
was developed by de Graaf, Harrison, P\'ilnikov\'a and Schicho~\cite{GHPS}, and a Magma function
\texttt{ParametrizeScroll} for carrying out this procedure in the case of rational normal surface scrolls 
was written by Schicho. Unfortunately this was intended to work over fields of characteristic zero only, and 
indeed the function always seems to crash when invoked over fields of characteristic three; see also Remark~\ref{blinduse} below. 
We do
not know how fundamental this flaw is, or to what extent it is  an artefact
of the implementation, 
but to resolve this issue we have implemented an ad hoc method that is specific to scrolls of type $(1,2)$. It can
be found in \texttt{convertscroll.m}; more background on the underlying reasoning can be read in an \texttt{arXiv} version of this paper (1605.02162v2).

Once our quadrics $\overline{S}_{2,1}, \overline{S}_{2,2}, \overline{S}_{2,3}$ are given by \eqref{defpols_scroll} we project from the line $X = Y = Z = 0$, which amounts to eliminating the variables
$V$ and $W$, in order to obtain the polynomials
\[ \overline{S}_{3,i}^\text{pr} = Z^3 \overline{S}_{3,i}(X,Y,Z,\frac{X^2}{Z},\frac{XY}{Z}) = \overline{S}_{3,i}(XZ,YZ,Z^2,X^2,XY)  \]
for $i=1,2$.
Dehomogenizing with respect to $Z$ and renaming $X \leftarrow x$ and $Y \leftarrow y$ we obtain two polynomials $\overline{f}_1, \overline{f}_2 \in \FF_q[x,y]$, whose zero loci intersect in the curve defined by $\overline{f} = \gcd(\overline{f}_1,\overline{f}_2)$. The Newton polygon of $\overline{f}$ is contained in (and typically equals):
\begin{center}
\polfig{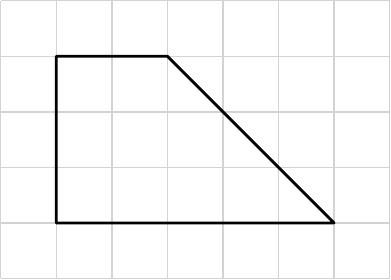}{2.8}{2}{$\Delta_{5,\text{trig}}^{0,0}$}
\end{center}
Note that in particular $\overline{f}$ attains Baker's bound, and a naive Newton polygon preserving lift $f \in \mathcal{O}_K[x,y]$ satisfies (i), (ii) and (iii). 
An alternative (namely, toric) viewpoint on our construction of $\overline{f}$, along with more background on the claims above, is given in Section~\ref{section_trigonal}.

\paragraph*{Non-trigonal case}

In the non-trigonal case, let us write the quadrics as
 \[ \overline{S}_{2,i} = \begin{pmatrix} X & Y & Z & W & V \end{pmatrix} \cdot \overline{M}_i \cdot \begin{pmatrix} X & Y & Z & W & V \end{pmatrix}^t, \qquad \overline{M}_i \in \FF_q^{5 \times 5}, \ \overline{M}_i^t = \overline{M}_i. \]
The curve $\mathfrak{D}(\overline{C})$ in $\PPq^2 = \text{Proj} \, \FF_q[\lambda_1, \lambda_2, \lambda_3]$ defined by
\[ \det (\lambda_1  \overline{M}_1 +  \lambda_2 \overline{M}_2 + \lambda_3 \overline{M}_3) = 0 \]
parameterizes the singular members of $\mathcal{I}_2(\overline{C})$. It is a possibly reducible curve called the discriminant curve of $\overline{C}$, known
to be of degree $5$ and having at most nodes as singularities \cite{cornalba}${}^\dagger$. The non-singular points correspond to quadrics of rank $4$, while
the nodes correspond to quadrics of rank $3$. For a point $P \in \mathfrak{D}(\overline{C})(\FF_q)$, let us denote by $\overline{M}_P$ the corresponding $(5 \times 5)$-matrix and by $\overline{S}_P$ the corresponding quadric, both of which are 
well-defined up to a scalar. We define
\[ \chi : \mathfrak{D}(\overline{C})(\FF_q) \rightarrow \{ 0, \pm 1 \} : P \mapsto \left\{ \begin{array}{ll} \chi_2(\pdet( \overline{M}_P) ) & \text{if $P$ is non-singular,} \\ 0 & \text{if $P$ is singular,} \\ \end{array} \right. \]
where $\pdet$ denotes the pseudo-determinant, i.e.\ the product of the non-zero eigenvalues.

If we let $S_{2,i} \in \mathcal{O}_K[X,Y,Z,W,V]$ be homogeneous polynomials that
reduce to $\overline{S}_{2,i}$ modulo $p$, then by \cite[Ex.\,IV.5.5.3]{hartshorne}
these define a genus $5$ curve $C \subset \PPK^4$ over $K$, thereby addressing (i) and (ii). But 
as mentioned in Section~\ref{section_firstfacts} we
expect the $K$-gonality of $C$ to be typically $2g - 2 = 8$,
which exceeds the $\FF_q$-gonality of $\overline{C}$:

\begin{lemma} \label{genus5gonality}
Let $\overline{C} / \FF_q$ be a non-hyperelliptic non-trigonal curve of genus $5$ and $\FF_q$-gonality $\gamma$, and assume that $q$ is odd.
If there is a point $P \in \mathfrak{D}(\overline{C})(\FF_q)$ for which $\chi(P) \in \{0, 1 \}$ then $\gamma = 4$.
If there does not exist such a point and $\# \overline{C}(\FF_{q^3}) > 0$ (which is guaranteed if $q > 3$) then $\gamma = 5$.
If there does not exist such a point and $\# \overline{C}(\FF_{q^3}) = 0$ then $\gamma = 6$.
\end{lemma}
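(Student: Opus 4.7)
I would mirror the structure of Lemma~\ref{genus4gonality}, treating the three possible gonalities in turn. Brill-Noether theory (Section~\ref{section_firstfacts}) forces the geometric gonality of $\overline{C}$ to equal $4$, since $\overline{C}$ is non-hyperelliptic and non-trigonal of genus $5$, so certainly $\gamma\geq 4$. The whole question is therefore whether some geometric $g^1_4$ descends to $\FF_q$ and, if not, whether an $\FF_q$-rational $g^1_5$ can nevertheless be produced.

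The key classical input, already flagged with a $\dagger$ via \cite{cornalba} in the paragraph preceding the statement, is that each $g^1_4$ on $\overline{C}$ is cut out by one of the rulings by $2$-planes of a singular quadric $Q\in\mathcal{I}_2(\overline{C})$. A rank-$4$ quadric, i.e.\ a smooth point $P$ of $\mathfrak{D}(\overline{C})$, is a cone over a smooth quadric surface and carries two rulings; over $\FF_q$ these are individually defined over $\FF_q$ precisely when the associated pseudo-discriminant is a square ($\chi(P)=1$) and are swapped by Frobenius otherwise ($\chi(P)=-1$). A rank-$3$ quadric, i.e.\ a singular point of $\mathfrak{D}(\overline{C})$ where $\chi(P)=0$, has a unique family of $2$-planes through its vertex line, automatically $\FF_q$-rational. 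Combined with Remark~\ref{remark_gonalityoverFq}, this identifies the existence of a point $P\in\mathfrak{D}(\overline{C})(\FF_q)$ with $\chi(P)\in\{0,1\}$ as the precise condition for an $\FF_q$-rational $g^1_4$, hence for $\gamma=4$.

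Assume now no such $P$ exists, so $\gamma\geq 5$. I would argue exactly as in Lemma~\ref{genus4gonality}, using Riemann-Roch. If $\#\overline{C}(\FF_{q^3})>0$, a Frobenius orbit yields an effective $\FF_q$-rational divisor $D$ of degree $1$ or $3$; replacing $D$ by $3D$ in the former case, we may assume $\deg D=3$. Then $\ell(K-D)=\ell(D)+1\geq 2$, so $|K-D|$ is an $\FF_q$-rational pencil of degree $5$ and thus $\gamma\leq 5$. Conversely, any $\FF_q$-rational $g^1_5$ contains, via Remark~\ref{remark_gonalityoverFq}, an effective $\FF_q$-rational $D_5$ with $\ell(D_5)\geq 2$; Riemann-Roch gives $\ell(K-D_5)\geq 1$, producing an effective $\FF_q$-rational divisor of degree $3$. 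Such a divisor is a sum of closed points and must include one of degree $1$ or $3$ (closed points of degree $2$ alone contribute an even total, which $3$ is not), hence meets $\overline{C}(\FF_{q^3})$. So $\gamma=5$ is equivalent to $\#\overline{C}(\FF_{q^3})>0$; otherwise $\gamma\geq 6$, with equality forced by the general bound $\gamma\leq g+1=6$ recalled before the lemma. The parenthetical $q>3$ follows from the Weil bound $\#\overline{C}(\FF_{q^3})\geq q^3+1-10q^{3/2}$, which is positive for $q\geq 5$.

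The main obstacle is not any of the arithmetic steps but the rigorous appeal to the classical bijection between $g^1_4$'s on $\overline{C}$ and rulings of singular quadrics in $\mathcal{I}_2(\overline{C})$, carried from characteristic zero (the reason the surrounding discussion already carries a $\dagger$). Once this is granted, the rationality claim reduces to the standard classification of non-degenerate quadrics over $\FF_q$ by the character $\chi_2$, and everything else is formal Riemann-Roch plus Remark~\ref{remark_gonalityoverFq}.
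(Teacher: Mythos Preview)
Your argument is correct and follows the same structure as the paper's proof: identify the $g^1_4$'s with rulings of singular quadrics in $\mathcal{I}_2(\overline{C})$, translate rationality into the $\chi(P)\in\{0,1\}$ condition, and then use Riemann--Roch on degree-$3$ divisors for the $\gamma=5$ step. The one substantive difference is in the $\gamma=6$ case. You dispose of it by invoking the general bound $\gamma\leq g+1$ from Section~\ref{section_firstfacts}, which is perfectly legitimate and shorter. The paper instead constructs an $\FF_q$-rational $g^1_6$ directly, splitting into two subcases: if $\#\overline{C}(\FF_{q^2})>0$ one takes $|K-D|$ for $D$ effective of degree~$2$; if $\#\overline{C}(\FF_{q^2})=0$ one uses an effective degree-$6$ divisor (which exists by the Weil bound over $\FF_{q^6}$) and shows $|K-D|$ is empty, so $|D|$ itself is a $g^1_6$. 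Your shortcut is cleaner; the paper's version is self-contained and more constructive, which matters elsewhere in the paper where one actually needs the pencil to produce a plane model. A minor remark: when you write ``$|K-D|$ is an $\FF_q$-rational pencil'' you are implicitly using that $\ell(D)=1$, which holds because $\overline{C}$ is non-trigonal; it would do no harm to say this explicitly, as the paper does.
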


\begin{proof}
By \cite[VI.Ex.\,F]{cornalba}${}^\dagger$ the geometric $g^1_4$'s are in correspondence with the singular quadrics containing $\overline{C}$.
More precisely:
\begin{itemize}
  \item Each rank $4$ quadric is a cone over $\PPq^1 \times \PPq^1$. By taking its span with the top, each line on $\PPq^1 \times \PPq^1$ gives rise to a plane intersecting the curve in $4$ points. By varying the line we obtain two $g^1_4$'s, one for each ruling of $\PPq^1 \times \PPq^1$.
  \item Each rank $3$ quadric is a cone with a $1$-dimensional top over a conic. By taking its span with the top, every point of the conic gives rise to a plane intersecting the curve in $4$ points. By varying the point we obtain a $g^1_4$.
\end{itemize}  
There are no other geometric $g^1_4$'s. Over $\FF_q$, we see that there exists a rational $g^1_4$ precisely 
\begin{itemize}
\item when there is a rank $4$ quadric that is defined over $\FF_q$, such that the base of the corresponding cone is $\FF_q$-isomorphic to $\PPq^1 \times \PPq^1$, or
\item when there is a rank $3$ quadric that is defined over $\FF_q$.
\end{itemize} 
In terms of the discriminant, this amounts to the existence of a $P \in \mathfrak{D}(\overline{C})$ for which $\chi(P) \in \{0, 1 \}$. So let us assume that $\gamma > 4$. If $\# \overline{C}(\FF_{q^3}) > 0$, which by the Serre-Weil bound is guaranteed
for $q > 3$, then there exists an effective $\FF_q$-rational degree $3$ divisor $D$ on $\overline{C}$. Because our curve is non-trigonal we
find $\dim |D| = 0$, so by the Riemann-Roch theorem we have
that $\dim | K - D| = 1$, and because $\deg (K-D) = 5$ we conclude that there exists a rational $g^1_5$ on $\overline{C}$. 
(Remark: geometrically, this $g^1_5$ is cut out by the pencil of hyperplanes through the plane spanned by the support of $D$, taking into account multiplicities.) The argument can be reversed: if there exists a $g^1_5 \ni D$ for some
$\FF_q$-rational divisor $D$ on $\overline{C}$, then Riemann-Roch implies that $|K-D|$ is non-empty, yielding an effective divisor of degree $3$, and in particular $\# \overline{C}(\FF_{q^3}) > 0$. So it remains to prove that if $\# \overline{C}(\FF_{q^3}) = 0$ then there exists a rational $g^1_6$. We make a case distinction: 
\begin{itemize}
  \item If $\# \overline{C}(\FF_{q^2}) > 0$ then there exists a rational effective divisor $D$ of degree $2$, and Riemann-Roch implies that $ \dim |K-D| = 2$, yielding the requested rational $g^1_6$ (even a $g^2_6$, in fact).
  \item If $\# \overline{C}(\FF_{q^2}) = 0$ then at least $\# \overline{C}(\FF_{q^6}) > 0$ by the Weil bound, so there exists a rational effective divisor 
  $D$ of degree $6$. Then $K-D$ is of degree $2$ and by our assumption $|K - D|$ is empty. But then Riemann-Roch asserts that $\dim |D| = 1$, and we have our rational $g^1_6$.
\end{itemize}
This ends the proof.
\end{proof}

%The above proof (using an appropriate notion of discriminant) also works in even characteristic.\unsure{Nog eens over nadenken.}

\begin{remark} \label{remarkjeroen}
If $q$ is large enough then it is very likely that $\mathfrak{D}(\overline{C})(\FF_q)$ will contain a point $P$ with $\chi(P) \in \{0,1\}$, and therefore that $\gamma = 4$; 
a more precise discussion is given below.
There do however exist counterexamples for every value of $q$, as is shown by a construction explained in an \texttt{arXiv} version of this paper (\texttt{1605.02162v2}).
\end{remark}

\begin{remark} \label{remarkgon6}
We do not know whether gonality $6$ actually occurs or not. For this one needs to verify the existence of
a non-trigonal genus five curve over $\FF_3$ which is pointless over $\FF_{27}$ and whose discriminant
curve has no $\FF_3$-rational points $P$ for which $\chi(P) \in \{0,1\}$. We ran a naive brute-force
search for such curves, but did not manage to find one.
\end{remark}

If $q$ is large enough and $\mathfrak{D}(\overline{C})$ has at least one (geometrically) irreducible component that is defined over $\FF_q$,
then a point $P \in \mathfrak{D}(\overline{C})(\FF_q)$ with $\chi(P) \in \{0,1\}$ exists and therefore $\overline{C}$ has $\FF_q$-gonality
$4$. To state a precise
bound on $q$, let us analyze the (generic) setting
where $\mathfrak{D}(\overline{C})$ is a non-singular plane quintic. In this case
the `good' points $P$ are in a
natural correspondence with pairs of $\FF_q$-points on an unramified double cover of $\mathfrak{D}(\overline{C})$;
we refer to~\cite[\S2(c)]{beauville} and the references therein
for more background.
By Riemann-Hurwitz this cover is of genus $11$, for which the lower Serre-Weil bound is positive from $q > 467$ on. 
The presence of singularities or of absolutely irreducible $\FF_q$-components of lower degree can be studied in a similar way and leads to smaller bounds. 

There are two possible ways in which $\mathfrak{D}(\overline{C})$ does \emph{not} have an absolutely irreducible $\FF_q$-component:
either it could decompose into two conjugate lines over $\FF_{q^2}$ and three conjugate lines over $\FF_{q^3}$, or it
could decompose into five conjugate lines over $\FF_{q^5}$. But in the former case
the $\FF_q$-rational point $P$ of intersection of the two $\FF_{q^2}$-lines satisfies $\chi(P) = 0$, so here too
our curve $\overline{C}$ has $\FF_q$-gonality $4$. Thus the only remaining case is that of five conjugate lines over $\FF_{q^5}$, which
can occur for every value of $q$.

Let us now address Problem~\ref{liftingproblem}.
First assume that $\gamma = 4$, i.e.\
that there exists a point $P \in \mathfrak{D}(\overline{C})(\FF_q)$ with $\chi(P) \in \{0, 1 \}$. This can be decided quickly: 
if $q \leq 467$ then one can
proceed by exhaustive search, while if $q > 467$ it is sufficient to
verify whether or not $\mathfrak{D}(\overline{C})$ decomposes into five conjugate lines.
To \emph{find} such a point, we first look for $\FF_q$-rational singularities of $\mathfrak{D}(\overline{C})$: these
are exactly the points $P$ for which $\chi(P) = 0$.
If no such singularities exist then we look for a point $P \in \mathfrak{D}(\overline{C})(\FF_q)$ for which $\chi(P) = 1$ 
by trial and error. Once our point has been found, we proceed as follows.

%In fact also the situations where $\chi(P) = 0$ occurs but $\chi(P) = 1$ does not are extremely rare if $q$ is large (one potential
%such situation is where $\mathfrak{D}(\overline{C})$ decomposes as a union of two conjugate lines and three conjugate lines).
%Nevertheless we will discuss both cases $\chi(P) = 0$ and $\chi(P) = 1$ in detail, because the former case leads to faster point counts.

\paragraph*{\underline{$\chi(P) = 0$}}

In this case $P$ corresponds to a rank $3$ quadric, which using a linear change of variables
we can assume to be in the standard form $\overline{S} = ZW - X^2$. Choose homogeneous
\begin{wrapfigure}{r}{5cm}
  \includegraphics[width=4.9cm]{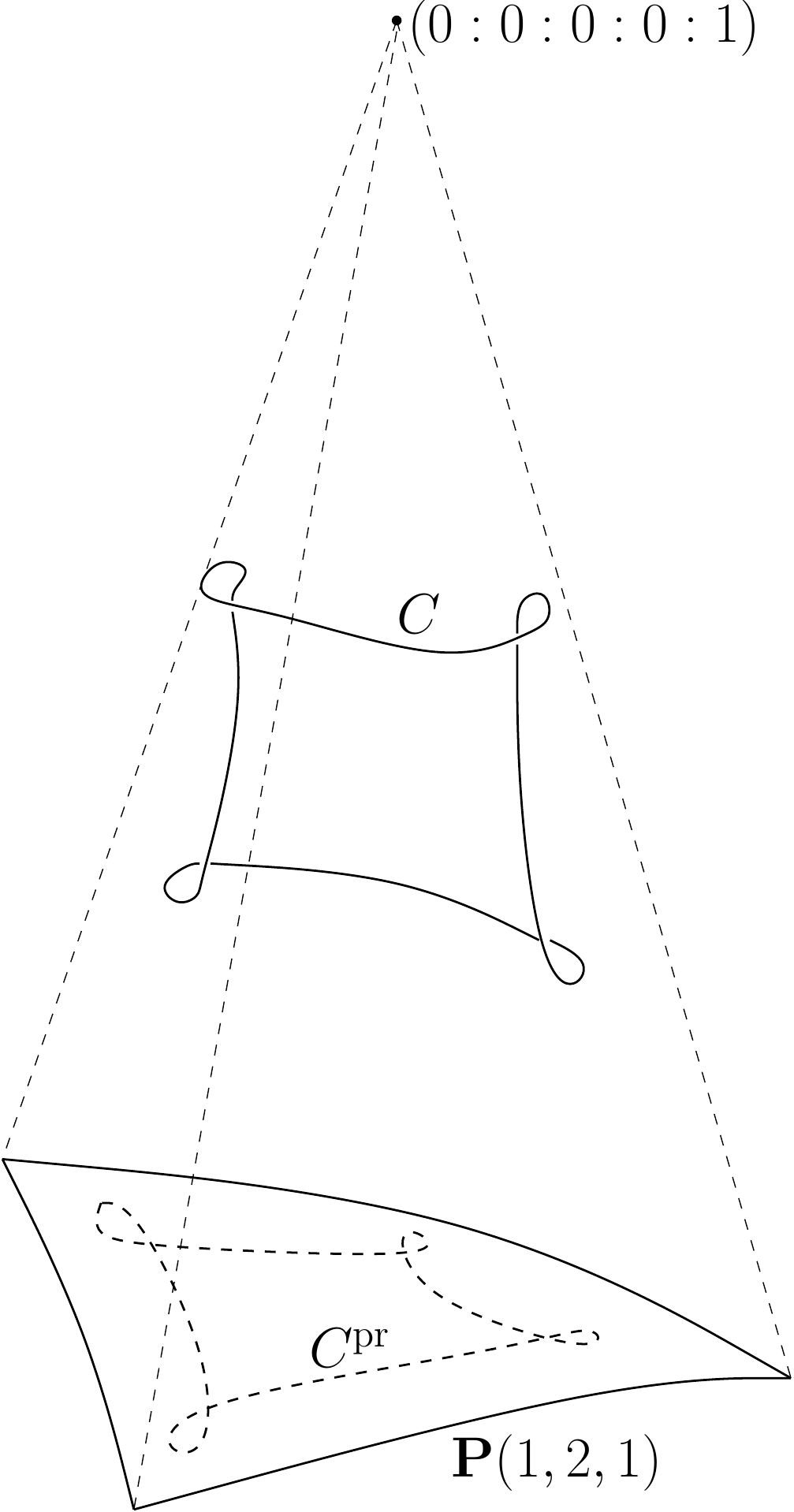}
  \vspace{0.3cm}
\end{wrapfigure}
quadratic polynomials
\[ \overline{S}_2, \overline{S}_2' \in \FF_q[X,Y,Z,W,V] \] 
that along with $\overline{S}$ form a basis of
$\mathcal{I}_2(\overline{C})$. (In practice one can usually take $\overline{S}_2 = \overline{S}_{2,1}$ and $\overline{S}'_2 = \overline{S}_{2,2}$.)
Let $S_2, S_2' \in \mathcal{O}_K[X,Y,Z,W,V]$ be quadrics that reduce to $\overline{S}_2, \overline{S}_2'$ modulo $p$.
Along with 
\[ S = ZW - X^2 \in \mathcal{O}_K[X,Y,Z,W,V] \] 
these cut out a canonical genus $5$ curve $C \subset \PPK^4$. 
We view the quadric defined by $S$ as a cone over the weighted projective plane $\PPK(1,2,1)$
with top $(0:0:0:0:1)$. Our curve is then an intersection of two quadrics inside this cone, and by projecting
from the top we obtain a curve $C^\mathrm{pr}$ in $\PPK(1,2,1)$. In terms of equations this amounts to eliminating
$V$ from $S_2$ and $S_2'$
 by taking the resultant
$S_2^\mathrm{pr} := \text{res}_V (S_2,S_2')$,
which is a homogeneous quartic. Now as in \eqref{genus4conic} we further eliminate the variable $W$ to end up with
$S_2^\mathrm{pr}(XZ,YZ,Z^2,X^2)$.
After dehomogenizing with respect to $Z$, renaming $X \leftarrow x$ and $Y \leftarrow y$ and rescaling if needed, we obtain an affine equation
$ f = y^4 + f_2(x)y^3 + f_4(x)y^2 + f_6(x)y + f_8(x)$,
 with $f_i \in \mathcal{O}_K[x]$ of degree at most $i$. Its Newton polygon
  is contained in (and typically equals):
\begin{center}
\polfig{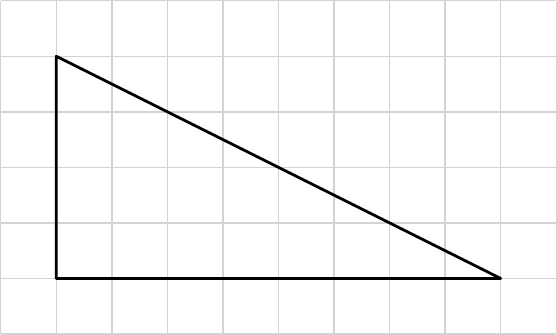}{4}{2.4}{$\Delta_{5,0}^{0}$}
\end{center}
%In particular $C^\mathrm{pr}$ is a singular curve, and we typically expect it to have $4$ nodes. We will exploit this in the optimizations section below.
Note that Baker's genus bound reads $9$, so this exceeds the geometric genus by $4$.
Thus it was important to lift $\overline{S}_2, \overline{S}_2'$ before projecting.

 \paragraph*{\underline{$\chi(P) = 1$}}
In this case $P$ corresponds to a rank $4$ quadric whose pseudo-determinant is a square. Using a linear change of variables
we can assume it to be in the standard form $\overline{S} = XY - ZW$, which is a cone over $\PPq^1 \times \PPq^1$ with top $(0:0:0:0:1)$.
Choose homogeneous quadratic polynomials
\[ \overline{S}_2, \overline{S}_2' \in \FF_q[X,Y,Z,W,V] \] 
that along with $\overline{S}$ form a basis of
$\mathcal{I}_2(\overline{C})$. (In practice one can usually take $\overline{S}_2 = \overline{S}_{2,1}$ and $\overline{S}'_2 = \overline{S}_{2,2}$.)
Let $S_2, S_2' \in \mathcal{O}_K[X,Y,Z,W,V]$ be quadrics that reduce to $\overline{S}_2, \overline{S}_2'$ modulo $p$.
Along with 
\[ S = XY - ZW \in \mathcal{O}_K[X,Y,Z,W,V] \]
these cut out a canonical genus $5$ curve $C \subset \PPK^4$, which can be viewed as an intersection of two quadrics inside a cone over $\PPK^1 \times \PPK^1$ with top $(0:0:0:0:1)$. 
We first project from
\begin{wrapfigure}{l}{6cm} 
 \includegraphics[width=5.9cm]{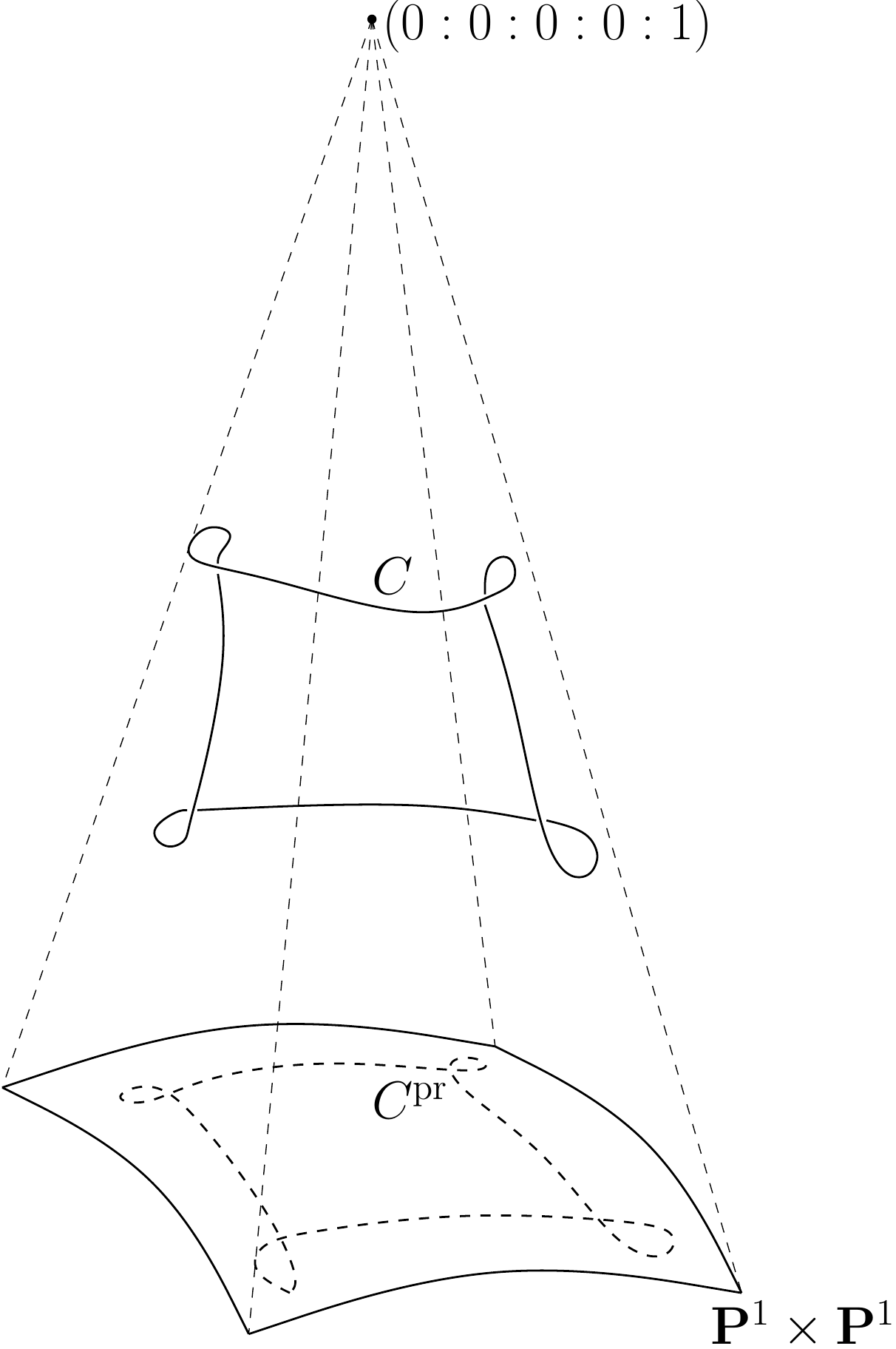}
 \vspace{-1.3cm}
\end{wrapfigure}
this top, to obtain a curve $C^\mathrm{pr}$ in $\PPK^1 \times \PPK^1$.
In terms of equations, this amounts to eliminating $V$ from $S_2$ and $S_2'$ by taking the resultant
$S_2^\mathrm{pr} := \text{res}_V (S_2,S_2')$,
which is a homogeneous quartic. As in the discussion following (\ref{segre}), we conclude
that $C^\mathrm{pr} $ is defined by the bihomogeneous polynomial
\begin{equation} \label{genus5bihomogeneous}
 S_2^\mathrm{pr}(X_0W_0,Y_0Z_0,Z_0W_0,X_0Y_0)
\end{equation}
of bidegree $(4,4)$. Let $f \in \mathcal{O}_K[x,y]$ be the polynomial
obtained from (\ref{genus5bihomogeneous}) by dehomogenizing with respect to $Z_0$ and $W_0$ and
by renaming $X_0 \leftarrow x$ and $Y_0 \leftarrow y$.
Then the Newton polygon of $f$ is contained in (and typically equals):
\begin{center}
\polfig{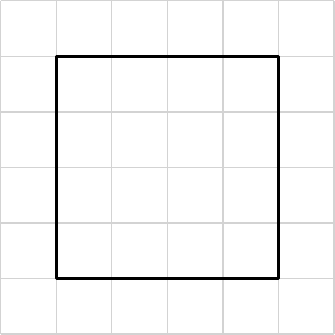}{2.4}{2.4}{$\Delta_{5,1}^0$}
\end{center}
In particular $\deg_y f = 4$, as wanted.
Here again Baker's bound reads $9$, which exceeds the geometric genus by $4$.
%Note that curves of bidegree $(4,4)$ have arithmetic genus $9$. In our case this exceeds the geometric genus by $4$. In particular $C^\mathrm{pr}$ is a singular curve, and we typically expect it to have $4$ nodes. We will exploit this in the optimizations section below.

 \paragraph*{\underline{$\forall P \in \mathfrak{D}(\overline{C})(\FF_q): \chi(P) = -1$}} This case is very rare, so we will be
 rather sketchy here.
 If $\gamma = 6$ then we do not know how to address Problem~\ref{liftingproblem}, 
  which for point counting purposes is not an issue because this could only occur when $q = 3$. 
  If $\gamma = 5$ then one can try to address Problem~\ref{liftingproblem} 
by following the proof of Lemma~\ref{genus5gonality}, similar to the way we treated the $
\chi(\det \overline{M}_2) = -1$ case in genus four. For instance this works as follows if $\overline{C}(\FF_q)$ has
at least three non-collinear points, which is guaranteed as soon as $\#\overline{C}(\FF_q) \geq 4$, which
in turn is guaranteed if $q > 101$ by the Serre-Weil bound. Apply a transformation of $\PPq^4$ to position
these points at $(0:1:0:0:0)$, $(0:0:0:1:0)$ and $(0:0:0:0:1)$, so that the plane they span is $X = Z = 0$. This implies that
the defining quadrics have no terms in $Y^2$, $W^2$ and $V^2$, a property which is of course easily preserved
when lifting to $\mathcal{O}_K[X,Y,Z,W,V]$, resulting in a curve $C \subset \PPK^4$ again passing through
$(0:1:0:0:0)$, $(0:0:0:1:0)$ and $(0:0:0:0:1)$. Eliminating $W$ and $V$, which geometrically amounts to projecting 
from the line $X = Y = Z = 0$, results in a sextic in $\PPK^2 = \proj K[X,Y,Z]$ passing through $(0:1:0)$ in a non-singular way (otherwise
the pencil of lines through that point would cut out a $K$-rational $g^1_4$). We can therefore
apply a projective transformation that maps the corresponding tangent line to infinity, while keeping the point at $(0:1:0)$.
Then by dehomogenizing with respect to $Z$ and
renaming $X \leftarrow x$ and $Y \leftarrow y$
we end up with a polynomial $f \in \mathcal{O}_K[x,y]$ whose Newton polygon is contained in (and typically equals): 
\begin{center}
  \polfig{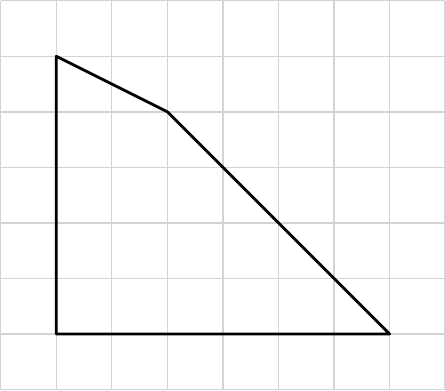}{3.2}{2.8}{$\Delta^5_5$}
\end{center}
We omit a further discussion.\\

\noindent \varhrulefill[0.4mm]
\vspace{-0.3cm}

\begin{algo} \label{algorithm_genus5}
Lifting curves of genus $5$: basic solution 

\vspace{-0.2cm}
\noindent \varhrulefill[0.4mm]

\noindent \textbf{Input:} non-hyperelliptic genus $5$ curve $\overline{C}/\FF_q$ of $\FF_q$-gonality $\gamma \leq 5$

\noindent \qquad \qquad \qquad \qquad \qquad \qquad \qquad \qquad \qquad \quad or of $\FF_q$-gonality $\gamma = 5$ and $\# \overline{C}(\FF_q) \geq 4$

\noindent \textbf{Output:} lift $f \in \mathcal{O}_K[x,y]$ satisfying (i), (ii), (iii) that is supported

\noindent \qquad \qquad \qquad $\bullet$ on $\Delta_{5,\text{trig}}^{0,0}$ if $\overline{C}$ is trigonal, or else

\noindent \qquad \qquad \qquad $\bullet$ on $\Delta_{5,0}^0$ if $\exists P \in \mathfrak{D}(\overline{C}): \chi(P) = 0$, or else

\noindent \qquad \qquad \qquad $\bullet$ on $\Delta_{5,1}^0$ if $\exists P \in \mathfrak{D}(\overline{C}): \chi(P) = 1$, or else

\noindent \qquad \qquad \qquad $\bullet$ on $\Delta_5^5$

\vspace{-0.2cm}
\noindent \varhrulefill[0.4mm]

\noindent \small \phantom{0}1 \normalsize: $\overline{C} \gets \text{CanonicalImage}(\overline{C})$ in $\PPq^4 = \proj \FF_q[X,Y,Z,W,V]$

\noindent \small \phantom{0}2 \normalsize: \textbf{if} $\text{Ideal}(\overline{C})$ is generated by quadrics \textbf{then}

\noindent \small \phantom{0}3 \normalsize: \quad $\overline{S}_{2,1}, \overline{S}_{2,2}, \overline{S}_{2,3} \gets \text{quadrics that generate $\text{Ideal}(\overline{C})$}$

\noindent \small \phantom{0}4 \normalsize: \quad $\overline{M}_i \gets \text{Matrix}(\overline{S}_{2,i})$ ($i=1,2,3$)

\noindent \small \phantom{0}5 \normalsize: \quad $\mathfrak{D}(\overline{C}) \gets$ curve in $\PPq^2 = \proj \FF_q[\lambda_1, \lambda_2, \lambda_3]$ defined by $\det(\lambda_1 \overline{M}_1 + \lambda_2 \overline{M}_2 + \lambda_3 \overline{M}_3)$

\noindent \small \phantom{0}6 \normalsize: \quad \textbf{if} $q \leq 467$ and $\forall P \in \mathfrak{D}(\overline{C})(\FF_q): \chi(P) = -1 $ (verified exhaustively)

\noindent \small \phantom{0}7 \normalsize: \quad \quad \textbf{or} $q > 467$ and $\mathfrak{D}(\overline{C})$ decomposes into five conjugate lines \textbf{then}

\noindent \small \phantom{0}8 \normalsize: \quad \quad goodpoints $\gets$ false 

\noindent \small \phantom{0}9 \normalsize: \quad \textbf{else} 

\noindent \small 10 \normalsize: \quad \quad goodpoints $\gets$ true

\noindent \small 11 \normalsize: \quad \textbf{if} goodpoints \textbf{then}

\noindent \small 12 \normalsize: \quad \quad \textbf{if} $\mathfrak{D}(\overline{C})$ has $\FF_q$-rational singular point $P$ \textbf{then}

\noindent \small 13 \normalsize: \quad \quad \quad $\overline{S}_2, \overline{S}_2' \gets \text{quadrics such that } \langle 
    \overline{S}_P, \overline{S}_2, \overline{S}_2' \rangle_{\FF_q} = \langle \overline{S}_{2,1}, \overline{S}_{2,2}, \overline{S}_{2,3} 
    \rangle_{\FF_q}$

\noindent \small 14 \normalsize: \quad \quad \quad apply automorphism of $\PPq^4$ transforming $\overline{S}_P$ into $WZ - X^2$

\noindent \small 15 \normalsize: \quad \quad \quad $S_2 \gets \text{NaiveLift}(\overline{S}_2)$; $S_2' \gets \text{NaiveLift}(\overline{S}_2')$; $S_2^\text{pr} \gets \text{res}_V(S_2, S_2')$

\noindent \small 16 \normalsize: \quad \quad \quad \textbf{return} Dehomogenization${}_Z(S_2^\text{pr}(XZ,YZ,Z^2,X^2))$

\noindent \small 17 \normalsize: \quad \quad \textbf{else}

\noindent \small 18 \normalsize: \quad \quad \quad \textbf{repeat} $P \gets \text{Random}(\mathfrak{D}(\overline{C})(\FF_q))$ \textbf{until} $\chi(P) = 1$

\noindent \small 19 \normalsize: \quad \quad \quad $\overline{S}_2, \overline{S}_2' \gets \text{quadrics such that } \langle 
    \overline{S}_P, \overline{S}_2, \overline{S}_2' \rangle_{\FF_q} = \langle \overline{S}_{2,1}, \overline{S}_{2,2}, \overline{S}_{2,3} 
    \rangle_{\FF_q}$

\noindent \small 20 \normalsize: \quad \quad \quad apply automorphism of $\PPq^4$ transforming $\overline{S}_P$ into $XY - ZW$

\noindent \small 21 \normalsize: \quad \quad \quad $S_2 \gets \text{NaiveLift}(\overline{S}_2)$; $S_2' \gets \text{NaiveLift}(\overline{S}_2')$; $S_2^\text{pr} \gets \text{res}_V(S_2, S_2')$

\noindent \small 22 \normalsize: \quad \quad \quad \textbf{return} Dehomogenization${}_Z(S_2^\text{pr}(XZ,YZ,Z^2,XY))$

\noindent \small 23 \normalsize: \quad \textbf{else} 

\noindent \small 24 \normalsize: \quad \quad $P_1, P_2, P_3 \leftarrow$ distinct random points of $\overline{C}(\FF_q)$

\noindent \small 25 \normalsize: \quad \quad apply automorphism of $\PPq^4$ sending $P_1$, $P_2$, $P_3$ to $(0:1:0:0:0)$, 

\noindent \small \phantom{25} \normalsize \hfill $(0:0:0:1:0)$, $(0:0:0:0:1)$

\noindent \small 26 \normalsize: \quad \quad $S_{2,i} \leftarrow \text{NaiveLift}(\overline{S}_{2,i})$ $(i = 1,2,3)$

\noindent \small 27 \normalsize: \quad \quad $C^\text{pr} \leftarrow \text{res}_{W,V}(S_{2,1},S_{2,2},S_{2,3})$

\noindent \small 28 \normalsize: \quad \quad apply automorphism of $\PPK^2$ transforming $T_{(0:1:0)}(C^\text{pr})$ into $Z=0$

\noindent \small 29 \normalsize: \quad \quad \textbf{return} Dehomogenization${}_Z(C^\text{pr})$

\noindent \small 30 \normalsize: \textbf{else}

\noindent \small 31 \normalsize: \quad apply automorphism of $\PPq^4$ transforming space of quadrics in $\text{Ideal}(\overline{C})$ to

\noindent \small \phantom{31} \normalsize \hfill $\langle X^2 - ZV, XY - ZW, XW - YV \rangle_{\FF_q}$ %\ (using Algorithm~\ref{algo_scroll})

\noindent \small 32 \normalsize: \quad $\overline{S}_{3,1}, \overline{S}_{3,2} \gets \text{cubics that along with quadrics generate $\text{Ideal}(\overline{C})$}$

\noindent \small 33 \normalsize: \quad $\overline{f}_i \gets \text{Dehomogenization}_{Z}(\overline{S}_{3,i}(XZ,YZ,Z^2,X^2,XY))$ ($i=1,2$)

\noindent \small 34 \normalsize: \quad \textbf{return} NaiveLift($\gcd(\overline{f}_1, \overline{f}_2)$)

\vspace{-0.2cm}
\noindent \varhrulefill[0.4mm]
\end{algo}

\subsubsection{Optimizations} \label{optim_genus5}

\paragraph*{Trigonal case}
By applying (\ref{mademonic}) to 
a polynomial with Newton polygon $\Delta_{5,\text{trig}}^{0,0}$
we end up with a polynomial $f \in \mathcal{O}_K[x,y]$ that is monic in $y$ and that has degree $5 + (\gamma - 1)2 = 9$ in $x$. This
can be improved as soon as our curve $\overline{C} / \FF_q$ has a rational point $P$, which is guaranteed if $q > 89$ by the Serre-Weil bound (probably this bound is not optimal).
The treatment below is very similar to the genus four case where $\chi_2(\det \overline{M}_2) = 0$, as elaborated in Section~\ref{optim_genus4}. 
The role of $\PPq(1,2,1)$ is now played by our scroll $\overline{S}(1,2)$. Recall that the latter
is a ruled surface spanned by a line (the directrix) and a conic that are being parameterized simultaneously.
Using an automorphism of $\overline{S}(1,2)$ we can position
$P$ at the point at infinity of the spanning conic, in such a way that the curve and the conic meet at $P$ with multiplicity at least 
two. 
This results in a Newton polygon that is contained in (and typically equals):
\begin{center}
\polfig{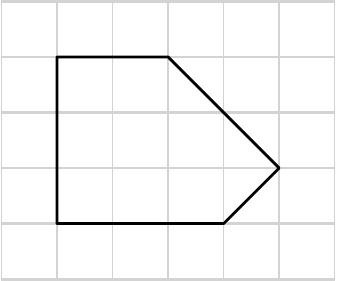}{2.4}{2}{$\Delta_{5,\text{trig}}^{0,1}$}
\end{center}
See Remark~\ref{autsofS12} below for how this can be done in practice.
Here an application of (\ref{mademonic}) typically results in $\deg_x f = 3 + (\gamma - 1)2 = 7$. There are two caveats here: our curve might exceptionally 
be tangent at $P$ to a rule of the scroll, in which case it is impossible to make it tangent to the conic at that point. Or worse: our point $P$ might lie on the directrix, in which case it is just impossible to move it to the spanning conic. 
In these cases one can most likely just retry with another $P$.
But in fact these two situations are better, as explained in Remark~\ref{genus5trigonalcaveatremark} below.

   \begin{remark}~\label{autsofS12}
   The automorphisms of $\overline{S}(1,2)$ can be applied directly to $\overline{f}$. They correspond to
   \begin{itemize}
     \item substituting 
   $y \leftarrow \overline{a} y + \overline{b} x + \overline{c}$ and $x \leftarrow \overline{a}' x + \overline{b}'$ in $
  \overline{f}$ for some 
  $\overline{a},\overline{a}' \in \FF_q^\ast$ and $\overline{b},\overline{b}',\overline{c} \in \FF_q$,
    \item exchanging the rule at infinity for the $y$-axis by replacing $\overline{f}$ by $x^5 \overline{f}(x^{-1},x^{-1}y)$, 
  \end{itemize} or to
  a composition of both. 
  For instance imagine that an affine point $P = (\overline{a},\overline{b})$ was found with a non-vertical
  tangent line. Then $\overline{f} \leftarrow \overline{f}(x + \overline{a}, y + \overline{b})$ translates this point to the origin,
  at which the tangent line becomes of the form $y = \overline{c} x$. Substituting $\overline{f} \leftarrow \overline{f}(x,y + \overline{c}x)$ positions this line horizontally, and finally replacing $\overline{f}$ by $x^5 \overline{f}(x^{-1},x^{-1}y)$ results
  in a polynomial with Newton polygon contained in $\Delta_{5,\text{trig}}^{0,1}$.
  \end{remark}
  
\begin{remark}[non-generic optimizations] \label{genus5trigonalcaveatremark}
As for the first caveat, if $\overline{C}$ turns out to be tangent at $P$ to one of the rules of the scroll then moving $P$ to the point at infinity of the 
spanning conic results in a Newton polygon that is contained in (and typically equals):
\begin{center}
\polfig{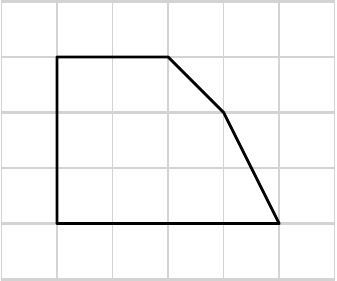}{2.4}{2}{$\Delta_{5,\text{trig}}^{0,2}$}
\end{center}
Even though this yields $\deg_x f = 4 + (\gamma - 1)2 = 8$, the corresponding point count is slightly faster. Such a $P$ will 
exist if and only if the ramification scheme of $(x,y) \mapsto x$ has an $\FF_q$-rational point. Following the 
heuristics from Remark~\ref{genus3flexremark}
we expect that this works in about $1 - 1/e$ of the cases.
As for the second caveat, if $P$ is a point on the directrix of the scroll, we can move it to its point at infinity. 
This results
in a Newton polygon that is contained in (and typically equals) the left polygon below.
\begin{center}
\polfig{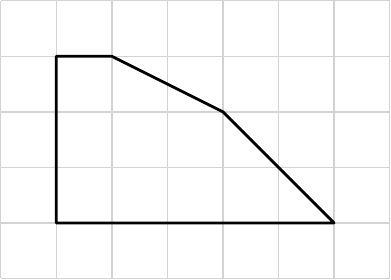}{2.8}{2}{$\Delta_{5,\text{trig}}^{1,0}$}
\polfig{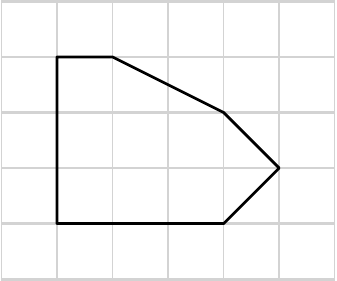}{2.4}{2}{$\Delta_{5,\text{trig}}^{1,1}$}
\end{center}
This again gives us $\deg_x f = 5 + (\gamma - 1)1 = 7$, but here too the corresponding point count is faster. As explained in an \texttt{arXiv}
version of our paper (\texttt{1605.02162v2}), the probability of being able to realize this polygon is about $1/2$, and one
can even end up inside the right polygon with a probability of about $3/8$, yielding
$\deg_x f = 4 + (\gamma - 1)1 = 6$.
\end{remark}

%We expect the events of having a rational ramification point and of having a rational point on the spanning line to be independent, so
%in about $31.6 \%$ of the cases, both should occur simultaneously. This allows one to obtain a Newton polygon contained in
%\todo[inline]{tekening van $\conv \{ (0,0), (4,0), (3,2), (1,3), (0,3) \} =: \Delta_{5,\text{trig}}^{1,2}$}
%yielding $\deg_x f = 4 + (\gamma - 1)1 = 6$, which is worse than in the foregoing case but nevertheless results in a faster point count.\\
%
%\begin{remark}
%One could also try to let $Q$ be a point of ramification index $3$, or to choose it tangent to the spanning line. This will usually be %impossible, but if it turns out to be feasible one obtains as respective Newton polygons
%\end{remark}

%\begin{remark}
%In certain cases it is possible to pursue a further compactification of the Newton polygon. The optimal version
%seems to be where the curve $\overline{C}$ is tangent at a point $P$ to the spanning line of the scroll, and that moreover
%the rule through that point is tangent to the curve at another point $Q$. This results in a polygon of the form
%\begin{center}
%\polfig{}{2.8}{2}{$\Delta_{5,\text{trig}}^{2,2}$}
%\end{center}
%and therefore $\deg_x f = 4$.
%\end{remark}

\paragraph*{Non-trigonal case} 
For point counting purposes
it is advantageous to give preference 
to the case $\chi(P) = 0$, i.e.\ to use a singular 
point $P \in \mathfrak{D}(\overline{C})(\FF_q)$ if
it exists. 
Some optimizations over the corresponding discussion in Section~\ref{optim_genus5} are possible,
for instance generically one can replace $\Delta_{5,0}^0$ with the left polygon below:
\begin{center}
\polfig{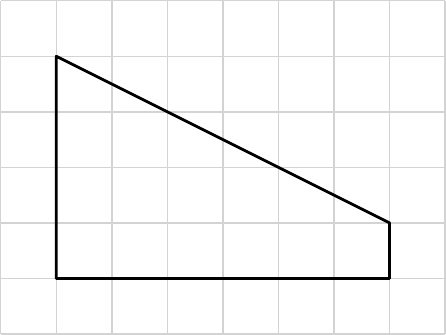}{3.2}{2.4}{$\Delta_{5,0}^{1}$} \quad
\polfig{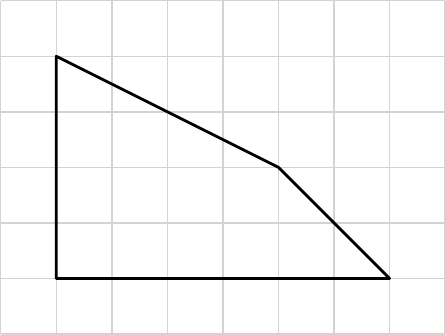}{3.2}{2.4}{$\Delta_{5,0}^{2}$}
\end{center}
With an estimated probability of about $1 - (3/8)^\rho$ one can even end up inside the right polygon.
Here $10 \geq \rho \geq 1$ denotes the number of singular points $P \in \mathfrak{D}(\overline{C})(\FF_q)$. 
We will spend a few more words on this in Remark~\ref{genus5conicaloptimizationremark} below, after having
discussed the $\chi(P) = 1$ case.
However usually such a singular $\FF_q$-point $P$ does not exist, i.e.\ $\rho = 0$. More precisely we expect that 
the proportion of curves for which $\mathfrak{D}(\overline{C})$
is a smooth plane quintic tends to $1$ as $q \rightarrow \infty$. Indeed, in terms of moduli the locus
of (non-hyperelliptic, non-trigonal) genus five curves having a singular point on its discriminant curve has codimension one;
see \cite{teixidor,looijenga}${}^\dagger$. For this reason
we will focus our attention on the case $\chi(P) = 1$, and leave it to the interested
reader to elaborate the remaining details.

%\begin{remark}
%Even though $\chi(P) = 0$ generically does not occur, 
%our experience is that many genus five curves over $\FF_q$ that pop up in practice, such as 
%non-degenerate curves or certain modular curves, \emph{do} admit such an $\FF_q$-rational
%singular point on their discriminant curve.
%\end{remark}

As for the case $\chi(P) = 1$, note that by applying (\ref{mademonic}) to 
a polynomial with Newton polygon $\Delta_{5,1}^{0}$
one ends up with a polynomial that is monic in $y$ and that has degree $4 + (\gamma - 1)4 = 16$ in $x$.
With near certainty this can be reduced to $10$, as we will explain now. The idea is to exploit the fact that in practice 
the discriminant curve $\mathfrak{D}(\overline{C})$
contains enough $\FF_q$-rational points for there to be considerable freedom in choosing a $P$ for which 
$\chi(P) = 1$. We want to select a suited such $P$, by which we mean the following.

As before, assume that an automorphism of $\PPq^4$ has been applied
such that $\overline{S}_P = \overline{S} = XY - ZW$ and let $\overline{S}_2, \ \overline{S}_2' \in \FF_q[X,Y,Z,W,V]$
be quadrics that along with $\overline{S}$ cut out our curve $\overline{C}$. 
Now suppose that we would have projected $\overline{C}$
 from the point $(0:0:0:0:1)$ \emph{before} lifting to characteristic $0$. Then we would have ended up with a curve $\overline{C}^\text{pr}$
 in \[ \PPq^1 \times \PPq^1 : \overline{S} = 0 \quad \text{in } \PPq^3 = \proj \FF_q[X,Y,Z,W]. \]
 This curve has
arithmetic genus $9$, because in fact that is what Baker's bound measures. Since the excess in genus is $9 - 5 = 4$
we typically expect there to be $4$ nodes. Our point $P$ is `suited'
as soon as one of the singular points $Q$ of $\overline{C}^\text{pr}$ is $\FF_q$-rational. If $P$ is not suited, i.e.\ if
there is no such $\FF_q$-rational
singularity, then we retry with another $P \in \mathfrak{D}(\overline{C})(\FF_q)$ for which $\chi(P) = 1$.
Heuristically we estimate the probability of success to be about $5/8$. In particular if there are enough candidates for $P$ available,
 we should end up being successful very quickly with overwhelming probability.

Given such a singular point $Q \in \overline{C}^\text{pr}(\FF_q) \subset \PPq^1 \times \PPq^1$ we can move it to the point 
$((1:0),(1:0))$, similar to what we did in the genus $4$ case where $\chi_2(\det \overline{M}_2) = 1$. In terms of the
coordinates $X,Y,Z,W$ of the ambient space $\PPq^3$ this means moving the point to $(0:0:0:1)$. 
% where we assume that $\PPq^1 \times \PPq^1$ is embedded using \eqref{segre}. 
Let's say this amounts to the change of variables
\[ \begin{pmatrix} X \\ Y \\ Z \\ W \\ \end{pmatrix} \leftarrow A \cdot \begin{pmatrix} X \\ Y \\ Z \\ W \\ \end{pmatrix} \]
where $A \in \FF_q^{4 \times 4}$. Then we can apply the change of variables
\[ \begin{pmatrix} X \\ Y \\ Z \\ W \\ V \\ \end{pmatrix} \leftarrow \begin{pmatrix} A & 0 \\ 0 & 1 \\ \end{pmatrix} \cdot \begin{pmatrix} X \\ Y \\ Z \\ W \\ V \\ \end{pmatrix} \]
directly to the defining polynomials $\overline{S}, \overline{S}_1, \overline{S}_2$ of $\overline{C}$ to obtain the curve $\overline{C}_\text{tr}$ cut out by
\[
  \overline{S} = XY - ZW, \ \overline{S}_{2,\text{tr}}, \ \overline{S}_{2',\text{tr}} \in \FF_q[X,Y,Z,W,V].
\]
Indeed the transformation affects $\overline{S}$ at most through multiplication by a non-zero scalar. If we would now project from $(0:0:0:0:1)$ as before,
we would end up with a curve $\overline{C}_\text{tr}^\text{pr} \subset \PPq^1 \times \PPq^1$ having a singularity at $((1:0),(1:0))$, which is at $(0:0:0:1)$ in the coordinates $X,Y,Z,W$. %This is the property that we would like to lift to characteristic $0$.

Recall that inside $\PPq^4$ we view $\overline{S}$ as the defining equation of a cone over $\PPq^1 \times \PPq^1$ with top $(0:0:0:0:1)$. The fact that the projected curve has a singularity at $(0:0:0:1)$ 
implies that
the line $X = Y = Z = 0$ meets the curve at least twice, counting multiplicities (these points of intersection need not be $\FF_q$-rational). Thus after multiplying $\overline{S}_{2,\text{tr}}$ by a scalar if needed we find 
that 
\[ \overline{S}_{2,\text{tr}}(0,0,0,W,V) = \overline{S}'_{2,\text{tr}}(0,0,0,W,V) =  \overline{a}W^2 + \overline{b}WV + \overline{c}V^2 \]
for some $\overline{a}, \overline{b}, \overline{c} \in \FF_q$.
Now lift $\overline{S}_{2,\text{tr}}$ and $\overline{S}_{2',\text{tr}}$ in a consistent way, in order 
to obtain quadrics $S_2, S_2' \in \mathcal{O}_K[X,Y,Z,W,V]$ satisfying
\begin{equation*}
  S_2(0,0,0,W,V)  =  S_2'(0,0,0,W,V)  = aW^2 + bWV + cV^2
\end{equation*}
for elements $a,b,c \in \mathcal{O}_K$ that reduce to $\overline{a}, \overline{b}, \overline{c}$ 
modulo $p$.
If we then proceed as before, we end up with a curve $C^\text{pr}$ in $\PPK^1 \times \PPK^1$ 
having a singularity at $((1:0),(1:0))$.
This eventually results in a defining polynomial $f \in \mathcal{O}_K[x,y]$ whose Newton polygon is contained in (and typically equals): 
\begin{center}
\polfig{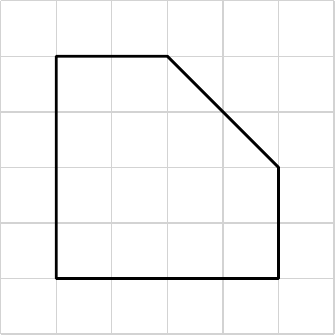}{2.4}{2.4}{$\Delta_{5,1}^2$}
\end{center}
Applying \eqref{mademonic} to $f$ results in a polynomial having degree at most $4 + (\gamma - 1)2 = 10$ in $x$, as announced.

\begin{comment}
\begin{remark}[potential non-generic optimizations]
Geometrically, the suited points $P$ form a one-dimensional family, which seems to leave us with an unexploited degree of freedom.
For instance, it is natural to guess that for a zero-dimensional subscheme of points $P \in \mathfrak{D}(\overline{C})$
the corresponding curve $\overline{C}^\text{pr}$ has a singular point $Q$ with a horizontal (or vertical) branch. Then heuristically,
with a chance of about $1 - 1/e \approx 63.2 \%$ this works out in an $\FF_q$-rational way, eventually leading to a lift
$f \in \mathcal{O}_K[x,y]$ whose Newton polygon is contained in (and typically equals):
\begin{center}
\polfig{}{2.4}{2.4}{$\Delta_{5,1}^3$}
\end{center}
This eventually leads to degree at most $8$ in $x$. Unfortunately we could not come up with an efficient way of deciding the occurrence of this event.
\end{remark}
\end{comment}

\begin{remark} \label{genus5conicaloptimizationremark}
The same ideas apply to the case $\chi(P) = 0$, with the role of $\PPq^1 \times \PPq^1$ replaced by $\PPq(1,2,1)$.
If the projection $\overline{C}^\text{pr}$ of $\overline{C}$ to $\PPq(1,2,1)$ has an $\FF_q$-rational singular point, then it can be arranged
that the resulting curve $C^\text{pr} \subset \PPK(1,2,1)$ has a singularity at $(1:0:0)$, eventually yielding 
a polynomial $f \in \mathcal{O}_K[x,y]$ whose Newton polygon is contained in $\Delta^2_{5,0}$. As 
in the $\chi(P) = 1$ case we expect that
the probability that this works out for a given $P$ is about $5/8$. But
unlike the $\chi(P) = 1$ case there is not much freedom to retry in the case of failure: we have $\rho$ chances only.
This explains our expected probability of $1 - (3/8)^\rho$ to be able to realize $\Delta^2_{5,0}$. 

 If the foregoing fails every time then
we can play the same game with a non-singular $\FF_q$-rational point $Q$ on $\overline{C}^\text{pr}$ (guaranteed
to exist if $q > 89$ because then $\overline{C}$ has an $\FF_q$-rational point by the Serre-Weil bound). The
result is a curve $C^\text{pr} \subset \PPK(1,2,1)$ containing the point $(1:0:0)$. We can then use an automorphism of
$\PPK(1,2,1)$ to make $C^\text{pr}$ tangent to $X=0$ at that point (unless the tangent line is vertical, in which case we simply retry with another $Q$). This is done similarly to the way we handled the case $\chi_2(\det \overline{M}_2) = 0$ in Section~\ref{optim_genus4}: see in particular Remark~\ref{autsofP121}.
In this way one ends up in $\Delta^1_{5,0}$.
\end{remark}

\subsubsection{Implementation}

The tables below contain timings, memory usage and failure rates for the trigonal and non-trigonal case and various values of $p$ and $q=p^n$. 
For the precise meaning of the various entries in the tables see Section~\ref{section_genus3implementationandtimings}.\\

%\paragraph{Trigonal \\}

\vspace{-0.2cm}

\noindent \textbf{Trigonal}\\

\noindent \scriptsize
\tabcolsep=0.11cm
\begin{tabular}{r||r|r|r|r}
             & time     &time    & space  & fails   \\
$p$          & lift(s)  &pcc(s)  & (Mb)   & /1000   \\
\hline \hline
$11$         & $0.02$   & $0.6$  & $96$   & $206$   \\
$67$         & $0.02$   & $2.4$  & $96$   & $45$    \\
$521$        & $0.02$   & $23$   & $112$  & $4$     \\
$4099$       & $0.02$   &$358$   &$548$   & $1$     \\
$32771$      & $0.02$   &$4977$  &$3982$  & $0$    
\end{tabular}
\quad
\begin{tabular}{r||r|r|r|r}
             & time     &time    & space  & fails  \\
$q$          & lift(s)  &pcc(s)  & (Mb)   & /1000  \\
\hline \hline
$3^5$        & $0.1$    & $17$  & $108$   & $6$    \\
$7^5$        & $0.1$    & $33$  & $150$   & $0$    \\
$17^5$       & $0.2$    & $76$  &$556$    & $0$    \\
$37^5$       & $0.2$    & $186$ &$1070$   & $0$    \\
$79^5$       & $0.3$    & $452$ &$1716$   & $0$   
\end{tabular}
\quad
\begin{tabular}{r||r|r|r|r}
             & time   &time     & space  & fails  \\
$q$          & lift(s)&pcc(s)   & (Mb)   & /1000  \\
\hline \hline
$3^{10}$     & $1.2$  &  $82$   & $188$  & $0$    \\
$7^{10}$     & $2.0$  & $214$   & $621$  & $0$    \\
$17^{10}$    & $3.6$  & $587$   &$1366$  & $0$    \\
$37^{10}$    & $4.5$  &$1584$   &$2453$  & $0$    \\
$79^{10}$    & $6.3$  &$4039$   &$4176$  & $0$   
\end{tabular}\\

%\paragraph{Non-trigonal \\}

\normalsize

\vspace{0.3cm}

\noindent \textbf{Non-trigonal}\\

\noindent \scriptsize
\tabcolsep=0.11cm
\begin{tabular}{r||r|r|r|r}
             & time     &time    & space  & fails   \\
$p$          & lift(s)  &pcc(s)  & (Mb)   & /1000   \\
\hline \hline
$11$         &  $0.1$   & $2.0$  & $64$   & $14$    \\
$67$         &  $0.1$   & $7.2$  & $76$   & $0$     \\
$521$        &  $0.2$   &$65$    & $165$  & $0$     \\
$4099$       &  $0.2$   &$1326$  &$1326$  & $0$     \\
$32771$      &  $0.2$   &$21974$ &$10329$ & $0$    
\end{tabular}
\quad
\begin{tabular}{r||r|r|r|r}
             & time     &time    & space   & fails  \\
$q$          & lift(s)  &pcc(s)  & (Mb)    & /1000  \\
\hline \hline
$3^5$        & $2.5$    & $59$   &  $229$  & $0$    \\
$7^5$        & $5.3$    & $114$  &  $352$  & $0$    \\
$17^5$       & $10$     & $261$  &  $556$  & $0$    \\
$37^5$       & $14$     & $662$  &  $919$  & $0$    \\
$79^5$       & $19$     & $1552$ &  $1494$ & $0$   
\end{tabular}
\quad
\begin{tabular}{r||r|r|r|r}
             & time   &time      & space  & fails  \\
$q$          & lift(s)&pcc(s)    & (Mb)   & /1000  \\
\hline \hline
$3^{10}$     & $16$   &$504$     & $780$  & $0$   \\
$7^{10}$     & $40$   &$1191$    & $1304$ & $0$   \\
$17^{10}$    & $89$   &$2946$    & $2231$ & $0$   \\
$37^{10}$    & $128$  &$7032$    & $3679$ & $0$   \\ 
$79^{10}$    & $193$  &$15729$   & $6267$ & $0$     
\end{tabular}

\bigskip
\normalsize

%\section{Rational normal scrolls} \label{section_scrolls}

%\subsection{Scroll associated to a map to $\PPq^1$} \label{section_associatedscroll}

%\subsection{Lie algebra method} \label{section_liealgebramethod}

%\section{Problems beyond genus five, and a look ahead} \label{section_conclusion}

\section{Curves of low gonality} \label{section_lowgonality}

\subsection{Trigonal curves} \label{section_trigonal}

Recall
from Remark~\ref{remark_gonalityoverFq} that from genus five on
a curve $\overline{C} / \FF_q$ is trigonal iff it is geometrically trigonal.
It is known~\cite{saintdonat} that a minimal set of generators for the ideal of a canonical model 
$\overline{C} \subset \PPq^{g-1} = \proj \FF_q[X_1,X_2,\dots,X_g] $
of a non-hyperelliptic curve of genus $g \geq 4$ over $\FF_q$ consists of
\begin{itemize}
  \item $(g-2)(g-3)/2$ quadrics 
  \[ \overline{S}_{2,1}, \overline{S}_{2,2}, \dots, \overline{S}_{2,(g-2)(g-3)/2} \] 
  and $g-3$ cubics 
  \[ \overline{S}_{3,1}, \overline{S}_{3,2}, \dots, \overline{S}_{3,g-3} \] 
  if $\overline{C}$ is trigonal or $\FF_q$-isomorphic to a smooth curve in $\PPq^2$ of degree five,
  \item just $(g-2)(g-3)/2$ quadrics in the other cases. 
\end{itemize}
So given such a minimal set of generators, it is straightforward to decide trigonality, unless $g=6$ in which case
one might want to check whether $\overline{C}$ is isomorphic to a smooth plane quintic or not. See Remark~\ref{veronese} below for how to do this.

From now on assume that we are given a trigonal curve $\overline{C} / \FF_q$ in the above canonical form.
Then the quadrics $\overline{S}_{2,i}$ spanning $\mathcal{I}_2(\overline{C})$ are known to
define a rational normal surface scroll $\overline{S}$ of type $(a,b)$,
where $a,b$ are non-negative integers satisfying 
\begin{equation} \label{maroniconditions}
 a \leq b, \qquad a + b = g - 2, \qquad b \leq (2g-2)/3,
\end{equation}
called the Maroni invariants\footnote{The existing literature is ambiguous on this terminology. Some authors talk about \emph{the} Maroni invariant of a trigonal curve, in which case they could mean either $a = \min(a,b)$, or $b-a$.} of $\overline{C}$. This means that up to a linear change of variables, 
it is the image $\overline{S}(a,b)$ of
\[ \PPq^1 \times \PPq^1 \hookrightarrow \PPq^{g-1} : ((s:t),(u:v)) \mapsto (ut^a : ut^{a-1}s :  \dots : us^a : vt^b :  vt^{b-1}s :  \dots : vs^b), \]
i.e.\ it is the ruled
surface obtained by simultaneously parameterizing 
\begin{itemize}
\item
a rational normal curve of degree $a$ in the $\PPq^a$ corresponding to 
$X_1, X_2, \dots, X_{a+1}$, and
\item
a rational normal curve of degree $b$ in the $\PPq^b$ corresponding to $X'_1, X'_2, \dots, X'_{b+1}$,
where $X'_i$ denotes the variable $X_{a+1+i}$,
\end{itemize}
each time drawing the rule through the points under consideration (each of these rules
intersects our trigonal curve in three points, counting multiplicities). 

As a consequence, modulo a linear
change of variables, the space $\mathcal{I}_2(\overline{C})$ admits the $2 \times 2$ minors of
\begin{equation} \label{generalscrolleqs}
  \left( \begin{array}{cccc} X_1 & X_2 & \dots & X_a \\ X_2 & X_3 & \dots & X_{a + 1}  \\ \end{array} \right| \hspace{-0.1cm}
  \left. \begin{array}{cccc} X'_1 & X'_2 & \dots & X'_b \\ X'_2 & X'_3 & \dots & X'_{b+1}  \\ \end{array} \right)
\end{equation}
as a basis, for some $a,b$ satisfying \eqref{maroniconditions}. We assume that we have a function \texttt{ConvertScroll} at our disposal that
upon input of $\mathcal{I}_2(\overline{C})$ and a pair $(a,b)$ satisfying \eqref{maroniconditions}, either
\emph{finds} such a linear change of variables, or outputs `wrong type' in case the surface cut out by $\mathcal{I}_2(\overline{C})$ is
not a scroll of type $(a,b)$.

\begin{remark} \label{blinduse}
If $g=5$ then $(1,2)$ is the only pair of integers satisfying \eqref{maroniconditions}, and one can use our ad hoc method from mentioned in Section~\ref{section_genus5lifting} to find 
the requested linear change of variables as above. For higher genus we have written an experimental version of \texttt{ConvertScroll} in Magma, 
which can be found in the file \texttt{convertscroll.m}. It blindly relies 
%on the Lie algebra method due to de Graaf, Harrison, P\'ilnikov\'a and Schicho~\cite{GHPS}${}^\dagger$,
%and more precisely 
on Schicho's function \texttt{ParametrizeScroll}, which implements
the Lie algebra method from~\cite{GHPS}. Unfortunately the latter is only guaranteed to  
 work in characteristic zero, and indeed one runs into trouble when naively applying \texttt{ParametrizeScroll}
 over finite fields of very small characteristic; empirically however, we found that $p > g$ suffices for a slight modification of \texttt{ParametrizeScroll} to work consistently.
% For instance, in genus $6$ the function usually \emph{does} provide meaningful output for $p \geq 5$ (but in about $10\%$ to $20\%$ of the cases 
% one ends up in an infinite loop), while for $p = 3$ the computation breaks down systematically. On the other hand in genus $7$ the function rarely works (over any finite field), while in genus $8$ the situation becomes better again. 
We remark that it is an easy linear algebra problem to verify the correctness of the output, in case it is returned. In any case further research is needed to turn this into a more rigorous step.
\end{remark}

\begin{remark} \label{maroniorder}
If `wrong type' is returned then one retries with another pair $(a,b)$ satisfying \eqref{maroniconditions}.
From a moduli theoretic point of view~\cite{stohr}${}^\dagger$ the most likely case is $a = b = (g-2)/2$ if $g$ is even,
and $a + 1 = b = (g-1)/2$ if $g$ is odd, so it is wise to try that pair first, and then to let $a$ decrease gradually.
According to~\cite{schichosevilla}${}^\dagger$ the Lie algebra
method implicitly computes the Maroni invariants, so it should in fact be possible to get rid of this trial-and-error part; recall that we just use the function \texttt{ConvertScroll} as a black box.
\end{remark}

\begin{remark}[$g=6$] \label{veronese}
If `wrong type' is returned on input $(2,2)$ as well as on input $(1,3)$, then we are in the smooth plane quintic case and
therefore $\overline{C}$ is not trigonal. Here $\mathcal{I}_2(\overline{C})$ cuts out a Veronese surface in $\PPq^5$,
rather than a scroll. We will revisit this case at the end of the section. 
\end{remark}

Once our quadrics $\overline{S}_{2,i}$ are given by the minors of \eqref{generalscrolleqs}, we restrict our curve $\overline{C}$ to the embedded torus 
\[ \TTq^2 \hookrightarrow \PPq^{g-1} : (x,y) \mapsto (y: xy: \dots : x^ay : 1 : x : \dots : x^b) \]
by simply substituting 
\[ X_1 \leftarrow y, \ X_2 \leftarrow xy, \ \dots, \ X_{a+1} \leftarrow x^ay \quad \text{and} \quad X'_1 \leftarrow 1, \ X'_2 \leftarrow x, \ \dots, \ X'_{b+1} \leftarrow x^b. \]
This makes the quadrics vanish identically, while the cubics become
\[ \overline{f}_1, \overline{f}_2, \dots, \overline{f}_{g-3} \in \FF_q[x,y]. \]
The ideal generated by these polynomials is principal, i.e.\ of the form $(\overline{f})$, where the Newton polygon of $\overline{f} = \gcd(\overline{f}_1, \overline{f}_2, \dots, \overline{f}_{g-3})$ is contained in (and typically equals):
\begin{center}
\begin{minipage}[b]{6.4cm}
\begin{center}
  \includegraphics[height=2.7cm]{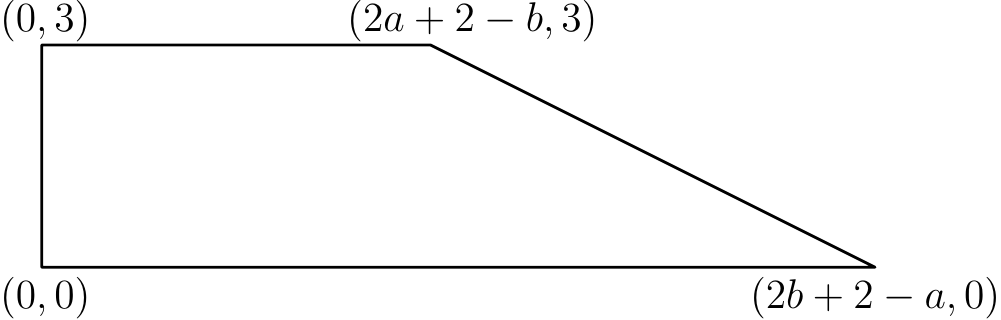}
\end{center}
\end{minipage}
\end{center}
The correctness of these claims follows for instance from~\cite[\S3]{cacocanonical}. Note that in particular $\overline{f}$ attains Baker's bound, so a naive Newton polygon preserving lift $f \in \mathcal{O}_K[x,y]$ satisfies (i), (ii) and (iii).

\begin{remark} 
 It should be clear that the above is a generalization of the corresponding method from Section~\ref{section_genus5lifting}, where we dealt with trigonal curves of genus five. 
 But the method also generalizes the genus four cases $\chi_2(\det \overline{M}_2) = 0$ and $\chi_2(\det \overline{M}_2) = 1$ from Section~\ref{section_genus4lifting}, where the scrolls are $\overline{S}(0,2) = \PPq(1,2,1)$ and $\overline{S}(1,1) = \PPq^1 \times \PPq^1$, respectively.
\end{remark}

\begin{remark}
Here too one could try to compress the Newton polygon by clipping off boundary points, similar to what we did in Section~\ref{optim_genus5}.
But as the genus grows the resulting speed-ups become less and less significant, and we omit a further discussion.
\end{remark}

\paragraph*{Example}
Let us carry out the foregoing procedure for the curve defined by
\[ (x^3 + x + 1)y^3 + 42(2x^4 + x^3 + 3x^2 + 3x + 1)y^2 + (x+1)(x^4 + 2x^2 + x + 1)y + 42(x^2+1) = 0 \]
over $\FF_{43}$. This is the reduction mod $43$ of 
the modular curve $X_0^+(164)$, or rather an affine model of it, whose equation we took from~\cite{modular}.  It is of genus $6$, while we note that Baker's bound reads $7$,
so it is not met here. Using the intrinsic~\texttt{CanonicalMap}
one computes that

\vspace{-0.2cm}
\scriptsize
\[ \left\{ \begin{array}{l}  X_1^2X_2 + 42X_1^2X_5 + 40X_1^2X_6 + 40X_1X_2X_6 + X_1X_3^2 + 2X_1X_3X_6 + 
        42X_1X_4^2 + 40X_1X_4X_5 + X_1X_4X_6 \\ \ \qquad \qquad + 6X_1X_5X_6 + 7X_1X_6^2 + 
        42X_2X_3^2 + 2X_2X_3X_6 + 41X_2X_6^2 + 42X_3^3 + 40X_3X_6^2 + 2X_4^2X_5 
        + 4X_4^2X_6 \\ \ \qquad \qquad + 4X_4X_5X_6 + X_4X_6^2 + 38X_5X_6^2 + 39X_6^3 \\
    X_1^2X_3 + 42X_1^2X_6 + 39X_1X_2X_6 + X_1X_3^2 + 38X_1X_3X_6 + 42X_1X_4X_5 + 
        X_1X_5X_6 + 7X_1X_6^2 + X_2X_3^2 \\ \ \qquad \qquad + 41X_2X_3X_6 + 8X_2X_6^2 + 42X_3^2X_6 + 
        4X_3X_6^2 + X_4^2X_6 + 5X_4X_5X_6 + X_4X_6^2 + 40X_5X_6^2 + 37X_6^3 \\
    42X_1^2X_6 + X_1X_2X_3 + 42X_1X_2X_6 + 39X_1X_3X_6 + 42X_1X_4X_5 + 
        42X_1X_5X_6 + 6X_1X_6^2 + X_2X_3^2 + 39X_2X_3X_6 \\ \ \qquad \qquad + 7X_2X_6^2 + X_3^3 + 
        42X_3^2X_6 + 5X_3X_6^2 + 42X_4^2X_6 + 5X_4X_5X_6 + 41X_4X_6^2 + X_5X_6^2 
        + 36X_6^3 \\
    42 X_1X_3 + 42X_1X_5 + X_2^2 + X_2X_6 + X_3X_6 + 42X_4^2 + 42X_4X_6 + X_5X_6 \\
    42X_1X_5 + X_2X_4 + X_2X_6 + 42X_4^2 + 42X_4X_6 + X_5X_6 \\
    42X_1X_6 + X_3X_4 + X_3X_6 + 42X_4X_5 + X_6^2 \\
    42X_1X_6 + X_2X_5 + 42X_4X_5 + X_6^2 \\
    42X_2X_6 + X_3X_5 \\
    42X_4X_6 + X_5^2 + 42X_6^2 \\ \end{array}  \right. \]
\normalsize is a minimal set of generators for the ideal $\mathcal{I}(\overline{C})$ of 
a canonical model $\overline{C} \subset \PPq^5$. We are clearly in the trigonal case, so the six quadrics must cut out a rational normal surface scroll. According to \eqref{maroniconditions} the type of the latter is either $(1,3)$ or $(2,2)$.
Following Remark~\ref{maroniorder} we first try $(2,2)$, so we search for a linear change of variables
taking $\mathcal{I}_2(\overline{C})$ to the space of quadrics spanned by the $2 \times 2$ minors of
\begin{equation*} 
  \left( \begin{array}{cc} X_1 & X_2 \\ X_2 & X_3 \\ \end{array} \right| \hspace{-0.1cm}
  \left. \begin{array}{cc} X_4 & X_5 \\ X_5 & X_6 \\ \end{array} \right).
\end{equation*}
Our experimental version of the function \texttt{ConvertScroll} turns out to work here, and the type $(2,2)$ was a correct guess: the change of variables returned by Magma reads
\[ \begin{pmatrix} X_1 \\ X_2 \\ X_3 \\ X_4 \\ X_5 \\ X_6 \\ \end{pmatrix} \leftarrow 
\begin{pmatrix} 40 & 3 & 42 & 0 & 30 & 33 \\
    0 & 12 & 35 & 40 & 42 & 2 \\
    0 & 9 & 4 & 30 & 29 & 42 \\
    20 & 37 & 5 & 2 & 8 & 22 \\
    22 & 19 & 11 & 28 & 32 & 14 \\
    38 & 29 & 16 & 21 & 33 & 36 \\
                 \end{pmatrix} \cdot \begin{pmatrix} X_1 \\ X_2 \\ X_3 \\ X_4 \\ X_5 \\ X_6 \\ \end{pmatrix}. \]
Applying this transformation to our generators of $\mathcal{I}(\overline{C})$ and then substituting 
\[ X_1 \leftarrow y, \ \ X_2 \leftarrow xy, \ \ X_3 \leftarrow x^2y, \ \ X_4 \leftarrow 1, \ \ X_5 \leftarrow x, \ \ X_6 \leftarrow x^2 \]
annihilates the quadrics, while the cubics become
\[ 6(x+27)(x+32)\overline{f}, \ \ 39(x+13)(x+20)\overline{f}, \ \ 2(x+13)^2\overline{f}\] 
respectively, where
\[ \begin{array}{rcl} \overline{f} \hspace{-0.2cm} & = & \hspace{-0.2cm} x^4y^3 + 8x^4y^2 + 31x^4y + 29x^4 + 37x^3y^3 + 23x^3y^2 + 
    16x^3y + x^3 + 12x^2y^3 + 18x^2y^2  \\
  &  & \qquad \qquad \quad + 12x^2y + 25x^2 + 10xy^3 + 
    7xy^2 + 30xy + 11x + 13y^3 + 36y^2 + 3y + 2. \\ \end{array} \]
For this polynomial Baker's bound is attained, so a naive lift to $f \in \mathcal{O}_K[x,y]$ satisfies (i), (ii), (iii).
After making $f$ monic using \eqref{mademonic} it can be fed to the algorithm from \cite{tuitman1,tuitman2} to find
the numerator 
\begin{multline*}
43^6 T^{12} + 43^5 \cdot 8 T^{11} + 43^4 \cdot 154 T^{10} + 43^3 \cdot 1032 T^9 + 43^2 \cdot 9911 T^8 + 43 \cdot 62496 T^7 \\
 + 444940 T^6 + 62496 T^5 + 9911 T^4 + 1032 T^3 + 154 T^2 + 8 T + 1
\end{multline*}
of the zeta function $Z_{\overline{C} / \FF_{43}}(T)$ in a couple of seconds.

\paragraph*{Point counting timings}
Despite the lack of a well-working function \texttt{ConvertScroll}, we can 
tell how the point counting algorithm from~\cite{tuitman1,tuitman2} should perform in composition with the above method,
by simply assuming that $\overline{C}$ is \emph{given} as the genus $g$ curve defined by a suitably generic polynomial $\overline{f} \in \FF_q[x,y]$ 
supported on $\conv \{ (0,0), (2b + 2 - a,0), (2a + 2 - b, 3), (0,3) \}$. Then we
can immediately lift to $\mathcal{O}_K[x,y]$. The tables below give point counting timings and memory usage for 
randomly chosen such polynomials in genera $g = 6,7$, where for the sake of conciseness we
restrict to the generic Maroni invariants $a = \lfloor (g - 2)/2 \rfloor$ and $b = \lceil (g-2)/2 \rceil$; the other Maroni invariants
give rise to faster point counts.\\

%\paragraph{$\mathbf{g=6}$ \\}

\vspace{-0.2cm}

\noindent \textbf{$\mathbf{g=6}$}\\

\noindent \scriptsize
\tabcolsep=0.11cm
\begin{tabular}{r||r|r}
             & time    & space   \\
$p$          & pcc(s)  & (Mb)    \\
\hline \hline
$11$         & $0.9$   & $32$    \\
$67$         & $6.0$   & $32$    \\
$521$        & $70$    & $118$   \\
$4099$       & $769$   & $824$   \\
$32771$      & $8863$  &$6829$      
%3890 based on 3 examples, try more
\end{tabular}
\quad
\begin{tabular}{r||r|r}
             & time      & space   \\
$q$          & pcc(s)    & (Mb)    \\
\hline \hline
$3^5$        &  $33$     & $76$    \\
$7^5$        &  $64$     & $80$    \\
$17^5$       &  $176$    &$197$    \\
$37^5$       &  $415$    &$371$    \\
$79^5$       &  $1035$   &$791$   
\end{tabular}
\quad
\begin{tabular}{r||r|r}
             &time     & space   \\
$q$          &pcc(s)   & (Mb)    \\
\hline \hline
$3^{10}$     & $183$   & $188$  \\
$7^{10}$     &$503$    &$320$   \\
$17^{10}$    &$1490$   &$749$   \\
$37^{10}$    &$3970$   &$1663$  \\
$79^{10}$    &$10945$  &$3716$ 
\end{tabular}\\

%\paragraph{$\mathbf{g=7}$ \\}

\normalsize
\vspace{0.3cm}

\noindent \textbf{$\mathbf{g=7}$}\\

\noindent \scriptsize
\tabcolsep=0.11cm
\begin{tabular}{r||r|r|r}
             & time    & space  & \\
$p$          & pcc(s)  & (Mb)   & \\
\hline \hline
$11$         & $1.5$   & $32$   & \\
$67$         & $6.5$   & $32$   & \\
$521$        & $88$    & $118$  & \\
$4099$       & $955$   & $857$  & \\
$32771$      &$13279$  & $6983$ &    
\end{tabular}
\quad
\begin{tabular}{r||r|r|r}
             & time      & space  & \\
$q$          & pcc(s)    & (Mb)   & \\
\hline \hline
$3^5$        &  $43$     & $76$   & \\
$7^5$        &  $91$     & $118$  & \\
$17^5$       &  $257$    &$241$   & \\
$37^5$       &  $602$    &$460$   & \\
$79^5$       &  $1561$   &$983$   &
\end{tabular}
\quad
\begin{tabular}{r||r|r|r}
             &time     & space  & \\
$q$          &pcc(s)   & (Mb)   & \\
\hline \hline
$3^{10}$     & $283$   &$197$  & \\
$7^{10}$     &$777$    &$371$  & \\
$17^{10}$    &$2384$   &$919$  & \\
$37^{10}$    &$6706$   &$2212$ & \\
$79^{10}$    &$18321$  &$4682$ &
\end{tabular}

\normalsize

\paragraph*{Smooth plane quintics}
We end this section with a brief discussion of the genus $6$ case where our canonical curve $\overline{C} \subset \PPq^5$ is $\FF_q$-isomorphic to a smooth plane quintic.
Such curves are never trigonal: using a variant of Lemma~\ref{genus3gonality} one verifies
that the $\FF_q$-gonality is $4$ if and only if $\# \overline{C}(\FF_q) > 0$, which is guaranteed if $q > 137$
by the Serre-Weil bound. In the other cases it is $5$. 
Nevertheless from the point of view of the canonical embedding, smooth plane quintics behave `as if they were trigonal', which is why we include them here. (The appropriate unifying statement reads that trigonal curves and smooth plane quintics
are exactly the curves having Clifford index $1$.)
Here our main task towards tackling Problem~\ref{liftingproblem} is to find a linear change of variables transforming
the space $\mathcal{I}_2(\overline{C})$ into 
\[ \langle X_2^2 - X_1X_4,
    X_2X_3 - X_1X_5,
    X_3^2 - X_1X_6,
    X_3X_4 - X_2X_5,
    X_3X_5 - X_2X_6,
    X_5^2 - X_4X_6 \rangle_{\FF_q} \]
whose zero locus is the Veronese surface in `standard form', i.e.\ the closure of the image of 
\[  \TTq^2 \hookrightarrow \PPq^5 : (x, y) \mapsto (x^2 : xy : x : y^2 : y : 1). \]
In order to achieve this, we simply assume that we have a function \texttt{ConvertVeronese} at our disposal. One could
again try to use Schicho's function \texttt{ParametrizeScroll} for this, but here too we expect problems because of the characteristic being finite (although we did not carry out
the experiment).
Once this standard form is attained, an easy substitution
\[ X_1 \leftarrow x^2, \ X_2 \leftarrow xy, \ X_3 \leftarrow x, \ X_4 \leftarrow y^2, \ X_5 \leftarrow y, \ X_6 \leftarrow 1 \]
makes the quadrics vanish identically, while the cubics have a gcd whose homogenization defines the desired smooth plane quintic.
From here one proceeds as in the smooth plane quartic case described in Section~\ref{basicsolutiontogenus3}.

\subsection{Tetragonal curves} \label{section_tetragonal}

We conclude this article with some thoughts on how the foregoing material can be adapted to the tetragonal case.
A full elaboration of the steps below (or even a rigorous verification of some corresponding claims) lies beyond our current scope. 
In particular we have not implemented anything of what follows. The main aim of this section is twofold: to illustrate how our treatment of non-trigonal curves of genus five from Section~\ref{section_genus5lifting} naturally fits within a larger framework,
and to propose a track for future research, involving mathematics that was developed mainly by Schreyer in~\cite[\S6]{schreyer}${}^\dagger$ and Schicho, Schreyer and Weimann in~\cite[\S5]{weimann}. 
%See also~\cite{schreyersinvariants}${}^\dagger$ for a toric viewpoint on tetragonal curves.

Let 
\[ \overline{C} \subset \PPq^{g-1} = \proj \overline{R}, \qquad \overline{R} = \FF_q[X_1,X_2,\dots,X_g] \]
be the canonical model of a genus $g \geq 5$ curve that is non-hyperelliptic, non-trigonal, and
not isomorphic to a smooth plane quintic, so that
a minimal set of generators of $\mathcal{I}(\overline{C}) \subset \overline{R}$
consists of $\beta_{12} := (g-2)(g-3)/2$ quadrics
\[ \overline{S}_{2,1}, \overline{S}_{2,2}, \dots, \overline{S}_{2,\beta_{12}}. \]
The notation $\beta_{12}$ refers to the corresponding entry in the graded Betti table
of the homogeneous coordinate ring of $\overline{C}$, to which we will make a brief reference at the
end of this section. Assume that the $\FF_q$-gonality of $\overline{C}$ is four, and consider a corresponding
$\FF_q$-rational
\begin{wrapfigure}{r}{6.1cm} 
  \hfill \includegraphics[width=6cm]{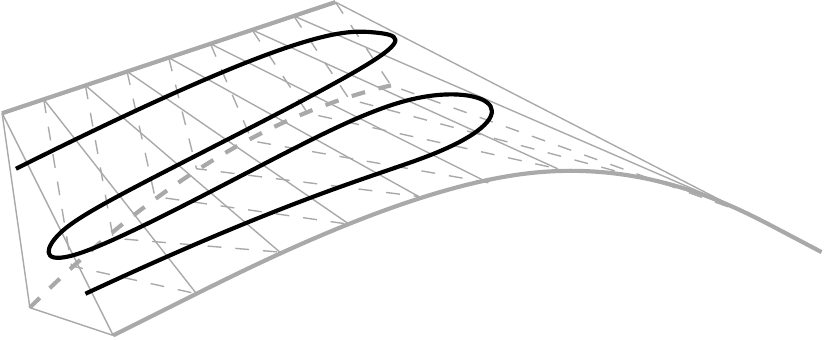} 
\end{wrapfigure}
map $\pi : \overline{C} \rightarrow \PPq^1$. 
We note that unlike the trigonal case this map may
not
be uniquely determined modulo
   automorphisms of $\PPq^1$,
even for $g$ arbitrarily large. 
The linear spans of the   
 fibers of $\pi$ form a one-dimensional family of
planes in $\PPq^{g-1}$ that cut out a rational normal \emph{threefold} scroll $\overline{S}$.
Similar to before, up to a linear change of variables, such a scroll is obtained by simultaneously
parameterizing
\begin{itemize}
\item a rational normal curve of degree $a$ in the $\PPq^a$ corresponding to 
$X_1, X_2, \dots, X_{a+1}$, 
\item
a rational normal curve of degree $b$ in the $\PPq^b$ corresponding to $X'_1, X'_2, \dots, X'_{b+1}$,
where $X'_i$ denotes the variable $X_{a+1+i}$, and
\item
a rational normal curve of degree $c$ in the $\PPq^c$ corresponding to $X''_1, X''_2, \dots, X''_{c+1}$,
where $X''_i$ denotes the variable $X_{a+b+2+i}$,
\end{itemize}
each time taking the plane connecting the points under consideration (each of these planes intersects our trigonal curve in four points, counting multiplicities). Again this concerns a determinantal variety, defined by
the $2 \times 2$ minors of
\begin{equation} \label{tetragonalscrolleqs}
  \left( \begin{array}{cccc} X_1 & X_2 & \dots & X_a \\ X_2 & X_3 & \dots & X_{a + 1}  \\ \end{array} \right| \hspace{-0.1cm}
  \left. \begin{array}{cccc} X'_1 & X'_2 & \dots & X'_b \\ X'_2 & X'_3 & \dots & X'_{b+1}  \\ \end{array} \right. \hspace{-0.1cm}
  \left| \begin{array}{cccc} X''_1 & X''_2 & \dots & X''_c \\ X''_2 & X''_3 & \dots & X''_{c + 1}  \\ \end{array} \right). 
\end{equation}
 Alternatively our scroll can be thought of as the Zariski closure of the image of
\[ \TTq^3 \hookrightarrow \PPq^{g-1} : (x,y,z) \mapsto (z:xz: \dots : x^az : y : xy : \dots : x^by : 1 : x : \dots : x^c ), \]
or if one prefers, as the toric threefold associated to the polytope
%\[ \conv \{ (0,0,0), (c,0,0), (0,1,0), (b,1,0), (0,0,1), (a,0,1) \}. \]
\begin{center}
\begin{minipage}[b]{6.4cm}
\begin{center}
  \includegraphics[height=2.4cm]{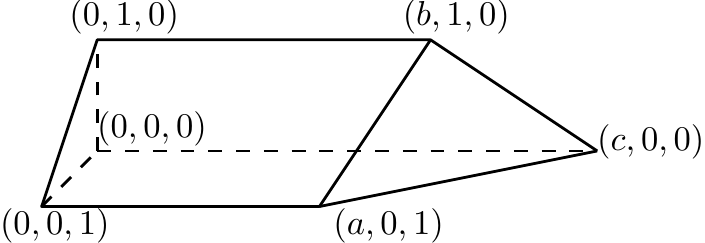}
\end{center}
\end{minipage}
\qquad
\begin{minipage}[b]{2cm}
$(\Delta_{(a,b,c)}).$
\vspace{0.9cm}
\end{minipage}
\end{center}
Let us denote this `standard' scroll in $\PPq^{g-1}$ by $\overline{S}(a,b,c)$. 
The non-negative integers $(a,b,c)$ are called the scrollar invariants of $\overline{C}$ with respect to $\pi$ and can be chosen to satisfy 
\begin{equation} \label{scrollarconditions}
 a \leq b \leq c, \qquad a + b + c = g - 3, \qquad c \leq (2g-2)/4,
\end{equation} 
where the last inequality follows from Riemann-Roch.

Inside the scroll $\overline{S}$ our curve $\overline{C}$ arises as a complete intersection of two hypersurfaces $\overline{Y}$ and $\overline{Z}$ that are `quadratic'. More precisely the Picard group of $\overline{S}$ is generated by the class $[ \overline{H} ]$ of a hyperplane section and the class $[\overline{\Pi}]$ of a ruling (i.e.\ of the linear span of a fiber of $\pi$), and $\overline{Y}$ and $\overline{Z}$ can be chosen such that
\[ \overline{Y} \in 2[ \overline{H} ] - b_1 [\overline{\Pi}] , \qquad \overline{Z} \in 2[ \overline{H} ] - b_2 [\overline{\Pi}] \]
for non-negative integers $b_1 \geq b_2$ satisfying $b_1 + b_2 = g-5$. 
These integers are invariants of the curve, that is, they do not depend on the choice of $\pi$. If $b_2 < b_1$ then
also the surface $\overline{Y}$ is uniquely determined by $\overline{C}$. This is automatic when $g$ is even. 

Let us now assume that $\overline{S}$ is given in the standard form $\overline{S}(a,b,c)$, which we consider along with the embedded torus $\TTq^3$. Then for $\overline{Y}$
to be in the class $2[ \overline{H} ] - b_1 [\overline{\Pi}]$ it means that $\overline{Y} \cap \TTq^3$ is defined by an irreducible polynomial $\overline{f}_{\overline{Y}} \in \FF_q[x,y,z]$
whose support is contained in
\begin{center}
\begin{minipage}[b]{5.8cm}
\begin{center}
  \includegraphics[height=2.4cm]{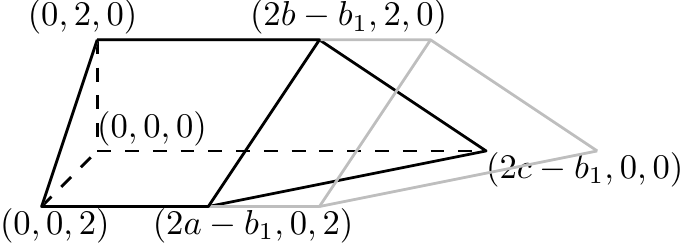}
\end{center}
\end{minipage}
\qquad
\begin{minipage}[b]{2cm}
$(\Delta_{(a,b,c),b_1}).$ 
\vspace{0.9cm}
\end{minipage}
\end{center}
or more precisely\footnote{Indeed, the coordinate $2a - b_1$ might be negative; an example of such behaviour can be found in an \texttt{arXiv} version of this paper (\texttt{1605.02162v2}).} in
\[ \conv \{ (0,0,0), (2c-b_1,0,0), (0,2,0), (2b-b_1,2,0), (0,0,2), (2a-b_1,0,2) \} \cap \RR_{\geq 0}^3. \]
%(indeed, the coordinate $2c-b_1$ might be negative).
In other words this is the polytope obtained from $2 \Delta_{(a,b,c)}$ by shifting its right-most face leftwards over a distance $b_1$. 
Moreover $b_1$ is the maximal integer for which this containment holds.
The same applies to $\overline{Z}$, leading to a polynomial $\overline{f}_{\overline{Z}} \in \FF_q[x,y,z]$ whose support is contained in 
$\Delta_{(a,b,c),b_2}$, which is the polytope obtained from $2 \Delta_{a,b,c}$ by shifting the right-most face inwards over a distance $b_2$.

%\begin{remark}
%The following example shows that the coordinate $2a - b_1$ might indeed be negative.
%Let $\overline{C}$ be a $C_{4,7}$ curve, i.e.\ the curve defined by a sufficiently generic bivariate polynomial
%over $\FF_q$ that is supported on $\conv \{(0,0), (7,0), (0,4) \}$. From~\cite{schreyersinvariants}${}^\dagger$
%it follows that there is a unique $g^1_4$ giving rise to a scroll $\overline{S}(0,2,4)$, and that $b_1 = 4$ and $b_2 = 0$.
%Here the polytope $\Delta_{(0,2,4),4}$
%obtained from $2 \Delta_{(0,2,4)}$ by shifting its right-most face inwards over a distance $4$ is
%\begin{center}
%\begin{minipage}[b]{4cm}
%\begin{center}
%  \includegraphics[height=2cm]{threefoldscrollc47.pdf}
%\end{center}
%\end{minipage}
%\end{center}
%The surface $\overline{Y}$ is a toric surface that inside $\TTq^3$ is cut out by the polynomial $\overline{f}_{\overline{Y}} = y^2 - z$, which is 
%indeed supported on the above polytope.
%\end{remark}

The main observation of this section is that $\overline{f}_{\overline{Y}}, \overline{f}_{\overline{Z}} \in \FF_q[x,y,z]$ 
is a pair of polynomials meeting a version of Baker's bound for complete intersections, again due to 
Khovanskii~\cite{khovanskiicomplete}${}^\dagger$. In the case of two trivariate polynomials supported on polytopes $\Delta_1$ and $\Delta_2$
the bound reads
\[ g \leq \# \left( \text{interior points of $\Delta_1 + \Delta_2$} \right) - \# \left( \text{interior points of $\Delta_1$} \right) - \# \left( \text{interior points of $\Delta_2$} \right). \]
In our case where $\Delta_1 = \Delta_{(a,b,c),b_1}$ and $\Delta_2 = \Delta_{(a,b,c),b_2}$, this indeed evaluates to $g - 0 - 0 = g$.
Thus the strategy would be similar: lift these polynomials in a Newton polytope preserving way
to polynomials $f_Y, f_Z \in \mathcal{O}_K[x,y,z]$. These
then again cut out a
genus $g$ curve in $\TTK^3$, and
a polynomial $f \in \mathcal{O}_K[x,y]$ satisfying (i)-(iii) can be found by taking the resultant of $f_Y$ and $f_Z$ with respect to $z$ (or with respect to $y$).

\paragraph*{Genus $5$ curves revisited} Let us revisit our treatment of tetragonal curves of genus five 
$\overline{C} \subset \PPq^4 = \proj \FF_q[X,Y,Z,W,V]$ from Section~\ref{section_genus5lifting}. 
\begin{enumerate}
\item Our first step was
to look for a point $P \in \mathfrak{D}(\overline{C})(\FF_q)$ for which $\chi(P) = 0$ or $\chi(P) = 1$.
The corresponding quadrics were described as cones over $\PPq(1,2,1)$ and $\PPq^1 \times \PPq^1$, respectively.
But in the current language these are just rational normal threefold scrolls of type $(0,0,2)$ resp.\ $(0,1,1)$.
Note that this shows that the scroll $\overline{S}$ may indeed depend on the choice of $\pi$.

\item For ease of exposition let us restrict to the case $\chi(P) = 1$. Then the second step was to transform the quadric into 
$XY - ZW$, whose zero locus is the Zariski closure of
\[ \TTq^3 \hookrightarrow \PPq^4 : (x,y,z) \mapsto (1 : xy : x : y : z), \]
i.e.\ the transformation takes the scroll $\overline{S}(0,1,1)$ into `standard form'. 

\item The other quadrics $\overline{S}_2, \overline{S}_2'$ are instances of the surfaces $\overline{Y}$ and $\overline{Z}$.
They are both in the class $2[\overline{H}]$, i.e.\ $b_1 = b_2 = 0$.
Viewing $\overline{Y}$ and $\overline{Z}$ inside the torus $\TTq^3$ amounts to evaluating them at $(1,xy,x,y,z)$,
resulting in polynomials that are supported on
\begin{center}
\begin{minipage}[b]{4cm}
\begin{center}
  \includegraphics[height=2.2cm]{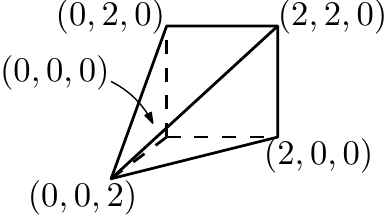}
\end{center}
\end{minipage}
\end{center}
as predicted. With the present approach we naively lift these polynomials to $f_Y, f_Z \in \mathcal{O}_K[x,y,z]$.
In Section~\ref{section_genus5lifting} we applied this naive lift directly to $\overline{S}_2, \overline{S}_2'$, which 
was fine there, but in higher genus it is more convenient to work in $\TTq^3$, since $\overline{Y}, \overline{Z} \subset \overline{S}$ will no longer be cut out by a single quadratic hypersurface of $\PPq^{g-1}$.

\item The last step was to project this lifted curve from $(0:0:0:0:1)$, which in our case amounts to taking
the resultant of $f_Y, f_Z$ with respect to $z$.

\end{enumerate}

\paragraph*{General recipe}
If we want to turn the above into a rigorous recipe for lifting tetragonal curves, three questions show up naturally.
We share some brief first thoughts, but further research is needed regarding each of these. 

\begin{enumerate}
 \item How do we decide whether the input curve has $\FF_q$-gonality $4$ or not, and how
 do we extract from $\mathcal{I}_2(\overline{C})$ the equations of a corresponding rational normal threefold scroll $\overline{S}$?
 
 In genus five we used the discriminant curve for this, but in general the desired information should be traceable from (the first few steps of) a minimal free resolution 
 \[ \overline{R}(-4)^{\beta_{34}} \oplus \overline{R}(-5)^{ \beta_{35} } \rightarrow \overline{R}(-3)^{\beta_{23}} \oplus \overline{R}(-4)^{\beta_{24}} \rightarrow \overline{R}(-2)^{\beta_{12}} \rightarrow \overline{R} \rightarrow \faktor{\overline{R}}{(\overline{S}_{2,1}, \dots, \overline{S}_{2, \beta_{12}})} \]
 of the homogeneous coordinate ring of $\overline{C}$ as a graded $\overline{R}$-module, 
 thanks to a proven part of Green's canonical syzygy conjecture~\cite[Thm.\,2.5]{weimann}, namely 
 that $\beta_{24} \neq 0$ if and only if $\overline{C}$ is $\overline{\FF}_q$-tetragonal or $\FF_q$-isomorphic
 to a smooth plane sextic, which in turn holds if and only if $\overline{C}$ has Clifford index $2$. (The dimensions $\beta_{ij}$ are
 usually gathered in the so-called graded Betti table of $\overline{C}$, and in general Green's conjecture predicts that the Clifford index
 equals the number of leading zeroes on the cubic strand, i.e.\ the minimal $i$ for which $\beta_{i,i+2} \neq 0$.)
 
 If $g \geq 7$ then a sufficiently generic geometrically tetragonal curve satisfies $\beta_{24} = g-4$. This is what Schicho, Schreyer and Weimann~\cite[Ex.\,4.2]{weimann} refer to as the \emph{goneric} case; see also~\cite[Thm.\,0.3]{farkaskemeny}${}^\dagger$. It implies that our curve admits a unique $g^1_4$, hence it is $\FF_q$-tetragonal, and that the ideal of the corresponding scroll $\overline{S}$ can be computed as the annihilator
 of the cokernel of the map \[ \overline{R}(-5)^{ \beta_{35}} \rightarrow \overline{R}(-4)^{ \beta_{24}}. \] See~\cite[Prop.\,4.11]{weimann}. 
 
 In the non-goneric cases one has
 $\beta_{24} = (g-1)(g-4)/2$ and a finer analysis is needed. Some further useful statements 
 can be found in~\cite{weimann} and~\cite{harrison}${}^\dagger$.
  
 \item How do we find the type $(a,b,c)$ of the scroll $\overline{S}$, along with a linear change of variables taking it
  into the standard form $\overline{S}(a,b,c)$ cut out by the minors of \eqref{tetragonalscrolleqs}?
 
 We encountered an analogous hurdle in the trigonal case. Here too it would be natural to try the Lie algebra method from~\cite{GHPS}, 
 but as mentioned this was designed to work over fields of characteristic zero, and it is not clear to us how easily the method carries over to small finite characteristic.
 
 \item How do we find the invariants $b_1, b_2$ along with hypersurfaces $\overline{Y} \in 2[\overline{H}] - b_1[\overline{\Pi}]$ and $\overline{Z} \in 2[\overline{H}] - b_2[\overline{\Pi}]$ that inside $\overline{S}(a,b,c)$ cut out our curve $\overline{C}$?
 
 By evaluating the generators of $\mathcal{I}(\overline{C})$ in $(z,xz,\dots,x^az,y,xy,\dots,x^by,1,x, \dots, x^c)$
 one easily finds a set of generators for the ideal of $\overline{C} \cap \TTq^3$. The challenge is now to replace
 this set by two polynomials that are supported on polytopes of the form \[ \Delta_{(a,b,c),b_1} \quad \text{and} \quad \Delta_{(a,b,c),b_2} , \] with 
 $b_1,b_2$ satisfying $b_1 + b_2 = g-5$. Here
 our approach would be to use a Euclidean type of algorithm to find generators whose
 Newton polytopes are as small as possible.
\end{enumerate}

\paragraph*{Point counting timings} We have not implemented anything of the foregoing recipe, but we
can predict how its output should perform in composition with the point counting algorithm from~\cite{tuitman1,tuitman2}, by simply 
starting from a sufficiently generic pair of polynomials $\overline{f}_{\overline{Y}}, \overline{f}_{\overline{Z}} \in \FF_q[x,y,z]$ that are supported on
$\Delta_{(a,b,c),b_1}$ and $\Delta_{(a,b,c),b_2}$
for non-negative integers $a,b,c$ satisfying \eqref{scrollarconditions} and $b_1 + b_2 = g-5$. 
Then one can naively lift to $\mathcal{O}_K[x,y,z]$, take the resultant with respect to $z$, make the outcome
monic using \eqref{mademonic}, and feed the result to the point counting algorithm. 
The tables below contain point counting timings and memory usage
for randomly chosen such pairs in genera $g = 6,7$.
For the sake of conciseness
it makes sense to restrict to the case where the scrollar invariants
$a,b,c$ and the tetragonal invariants $b_1, b_2$ are as balanced as possible, meaning that $c-a \leq 1$ and $b_1 - b_2 \leq 1$,
because this is the generic case~\cite{ballico,bopphoff}${}^\dagger$. We expect the other cases to run faster.\\

%\paragraph{$\mathbf{g=6}$ \\}

\vspace{0.1cm}

\noindent \begin{minipage}[b]{8cm}

\noindent \textbf{$\mathbf{g=6}$}\\

\noindent \scriptsize
\tabcolsep=0.11cm
\begin{tabular}{r||r|r}
             & time    & space   \\
$p$          & pcc(s)  & (Mb)    \\
\hline \hline
$11$         & $8.5$   & $32$    \\
$67$         & $34.7$  & $64$    \\
$521$        & $445$   & $379$   \\
$4099$       & $4748$  & $2504$    
\end{tabular}
\quad
\begin{tabular}{r||r|r}
             & time      & space     \\
$q$          & pcc(s)    & (Mb)      \\
\hline \hline
$3^5$        &  $266$    & $214$    \\
$7^5$        &  $549$    & $325$    \\
$3^{10}$     &  $2750$   &$6072$    \\
$7^{10}$     &  $6407$   &$9814$    
\end{tabular}

\end{minipage}
\hfill
\begin{minipage}[b]{8cm}

\noindent \textbf{$\mathbf{g=7}$}\\

%\paragraph{$\mathbf{g=7}$ \\}

\noindent \scriptsize
\tabcolsep=0.11cm
\begin{tabular}{r||r|r}
             & time     & space   \\
$p$          & pcc(s)   & (Mb)    \\
\hline \hline
$11$         & $11$     & $32$    \\
$67$         & $46$     & $80$    \\
$521$        & $445$    & $347$    \\
$4099$       & $4350$   & $2441$     
\end{tabular}
\quad
\begin{tabular}{r||r|r}
             & time      & space   \\
$q$          & pcc(s)    & (Mb)    \\
\hline \hline
$3^5$        &  $254$    & $156$    \\
$7^5$        &  $550$    & $241$    \\
$3^{10}$     &  $2347$   &$3606$    \\
$7^{10}$     &  $5819$   &$5724$    
\end{tabular}

\end{minipage}

\small

\vspace{5mm}
\noindent \textsc{Laboratoire Painlev\'e, Universit\'e de Lille-1}\\
\noindent \textsc{Cit\'e Scientifique, 59\,655 Villeneuve d'Ascq cedex, France}\\
\vspace{-0.4cm}

\noindent \textsc{Departement Elektrotechniek, KU Leuven and imec-Cosic}\\
\noindent \textsc{Kasteelpark Arenberg 10/2452, 3001 Leuven, Belgium}\\
\vspace{-0.4cm}

\noindent \emph{E-mail address:} \href{mailto:wouter.castryck@gmail.com}{wouter.castryck@gmail.com}\\
%\vspace{-0.2cm}

\noindent \textsc{Departement Wiskunde, KU Leuven}\\
\noindent \textsc{Celestijnenlaan 200B, 3001 Leuven, Belgium}\\
\vspace{-0.4cm}

\noindent \emph{E-mail address:} \href{mailto:jan.tuitman@wis.kuleuven.be}{jan.tuitman@wis.kuleuven.be}\\

\end{document}